\documentclass[10pt]{amsart}

\usepackage{amsrefs}
\usepackage{amsmath}
\usepackage{amssymb}
\usepackage{amsthm}
\usepackage{colonequals}
\usepackage{enumerate}
\usepackage[T3,T1]{fontenc}
\usepackage[margin=1in]{geometry}
\usepackage{hyperref}
\usepackage{xcolor}

\DeclareSymbolFont{tipa}{T3}{cmr}{m}{n}
\DeclareMathAccent{\invbreve}{\mathalpha}{tipa}{16}

\def\dime{{\mathfrak{Deg}}}
\def\h{{\bf h}}
\def\LL{{{Length}}}
\def\nn{\mathbb{N}}
\def\O{\Omega}
\def\rr{\mathbb{R}}
\def\bve{\bar\varepsilon}

\newcommand{\mM}{{\mathcal M}}
\newcommand{\mN}{{\mathcal N}}

\newcommand{\appsection}[1]{\let\oldthesection\thesection
\renewcommand{\thesection}{\oldthesection}
\section{#1}\let\thesection\oldthesection}

\newtheorem{theorem}{Theorem}[section]
\newtheorem{proposition}[theorem]{Proposition}
\newtheorem{lemma}[theorem]{Lemma}
\newtheorem{corollary}[theorem]{Corollary}
\newtheorem{definition}[theorem]{Definition}
\theoremstyle{definition}
\newtheorem{example}[theorem]{Example}
\newtheorem{examples}[theorem]{Examples}
\newtheorem{remark}[theorem]{Remark}

\let\d=\delta
\let\G=\Gamma
\let\L=\Lambda
\let\o=\omega
\let\ve=\varepsilon

\allowdisplaybreaks

\title{An $L^\infty$ Rashevskii-Chow Theorem}

\author{Ermal Feleqi}
\address{Department of Mathematics, University of Vlor{\"e}, Vlor{\"e}, Albania}
\email{ermal.feleqi@univlora.edu.al}

\author{Rohit Gupta}
\address{Institute for Mathematics and its Applications, University of Minnesota, Minneapolis, MN 55455, USA (Past Affiliation); Department of Aerospace Engineering, IIT Bombay, Powai, Mumbai, Maharashtra 400076, India (Present Affiliation)}
\email{rohit@aero.iitb.ac.in}

\author{Franco Rampazzo}
\address{Dipartimento di Matematica ``Tullio Levi-Civita'', Universit{\`a} degli Studi di Padova, Via Trieste, 63, Padova 35121, Italy}
\email{rampazzo@math.unipd.it}

\begin{document}

\maketitle

\begin{abstract}
Consider a finite family $\{f_1,\hdots,f_\nu\}$ of $C^\infty$ vector fields on a $n$-dimensional ($n\in\nn$), smooth manifold $\mM$. The celebrated Rashevskii-Chow theorem states that, provided the vector fields $\{f_1,\hdots,f_\nu\}$, together with their iterated Lie brackets, span the whole tangent space at some $x_*\in\mM$, then any $x$ in a neighborhood of $x_*$ can be connected to $x_*$ by means of a finite concatenation of {integral curves of $\{\pm f_1,\hdots,\pm f_\nu\}$}. This result finds applications in a number of areas, e.g., in control theory, in Sub-Riemannian geometry, and the theory of degenerate elliptic and parabolic partial differential equations, to mention a few. Here we extend this basic result to families of vector fields, which are considerably less regular, in particular, by allowing iterated Lie brackets to be just bounded measurable. This is technically made possible by the utilization of set-valued Lie brackets, which have already proven to be useful in extending commutativity type results, Frobenius' theorem, and also higher-order necessary conditions for optimal control problems, to the setting of non-smooth vector fields.
\end{abstract}

\section{Introduction}\label{s-intro}
\subsection{Problem Statement}
Let {$\{f_1,\hdots,f_\nu\}$ be a finite family of vector fields on a $n$-dimensional ($n\in\nn$), smooth manifold $\mM$. Consider the control system:
\begin{equation}\label{sistemaintro1}
\dot x=\displaystyle\sum_{i=1}^\nu u_if_i(x),
\end{equation}
where for every time $t>0$, the map $u=(u_1,\hdots,u_\nu)\colon [0,t]\to\rr^\nu$ is called an \emph{admissible control}, if it is piecewise constant and takes values in the set of standard basis vectors $\{\pm {\bf e}_1,\hdots,\pm {\bf e}_\nu\}$ of $\rr^\nu$. In particular, the solutions $x(\cdot)$ of \eqref{sistemaintro1} corresponding to admissible controls are nothing but finite concatenations of forward and backward integral curves of the vector fields $\{f_1,\hdots,f_\nu\}$. Let us assume that the vector fields $\{f_1,\hdots,f_\nu\}$ are locally Lipschitz continuous (this assumption is further relaxed in Section~\ref{s-Chow}). Then, for a given point $x_*\in\mM$, a time $t>0$ sufficiently small, and an admissible control $u$, let us denote by $x_{x_*,u}\colon [0,t]\to\mM$, the unique solution of \eqref{sistemaintro1} starting from $x(0)=x_*$ and corresponding to the control $u$.} Let us denote the \emph{reachable set from $x_*$ up to time $t$}, by $\text{\textsf{Reach}}(t,x_*)\subset \mM$, which is defined as follows:
\begin{equation*}
\text{\textsf{Reach}}(t,x_*)\colonequals\bigcup_{0\leq s\leq t}\{x_{x_*,u}(s)\; : \;{u}\colon {[0,s]}\to\rr^\nu~\text{is an admissible control}\}.
\end{equation*}
One says that the control system \eqref{sistemaintro1} is \emph{small}-\emph{time locally controllable} (STLC) from $x_*$, if for every sufficiently small time $t>0$, the {reachable set} $\text{\textsf{Reach}}(t,x_*)$ is a neighborhood of $x_*$. Let us also point out that the interest in STLC goes far beyond just the realm of control theory, as this notion is also foundational in \emph{Sub}-\emph{Riemannian geometry}, in the theory of \emph{degenerate elliptic} and \emph{parabolic partial differential equations}, as well as in the study of \emph{eikonal equations} (see Remark~\ref{literature} for some classic references in these areas).

The celebrated Rashevskii-Chow theorem (see \cite{Caratheodory1909,Rashevskii1938,Chow1939}) provides a sufficient condition in order that \eqref{sistemaintro1} be STLC. To state it, let us recall that the \emph{Lie bracket} $[g_1,g_2]$ of two {differentiable} vector fields $g_1$ and $g_2$, is itself a vector field (i.e., a first-order partial differential operator), which, in any system of local coordinates, is defined as $[g_1,g_2]\colonequals Dg_2\cdot g_1-Dg_1\cdot g_2$. Provided the vector fields $\{f_1,\hdots,f_\nu\}$ are sufficiently regular, one can iterate the bracketing operation a finite number of times, thereby obtaining \emph{iterated} Lie brackets. If the vector fields $\{f_1,\hdots,f_\nu\}$ are of class $C^\infty$ (as is customary, the notation $C^k$, for an integer $k\geq 0$ or $k=\infty$, stands for $k$-{times continuously differentiable}), one uses $\text{\textsf{Lie}}(\{f_1,\hdots,f_\nu\})$ to denote the $C^\infty(\mM,\rr)$-algebra generated by $\{f_1,\hdots,f_\nu\}$, where the bracketing operation is regarded as the ``multiplication'' in this algebra. We are now ready to state the classic Rashevskii-Chow theorem, where for any given point $x\in\mM$, we let
\begin{equation*}
\text{\textsf{Lie}}(\{f_1,\hdots,f_\nu\})(x)\colonequals\{v(x)\; : \;v\in\text{\textsf{Lie}}(\{f_1,\hdots,f_\nu\})\}.
\end{equation*}

\begin{theorem}[$C^\infty$ Rashevskii-Chow theorem]\label{CRintro1}
Let $\nu\geq 1$ be a given integer. If the family $\{f_1,\hdots,f_\nu\}$ of $C^\infty$ vector fields on a finite-dimensional smooth manifold $\mM$ is \emph{bracket-generating} at {$x_*\in \mM$}, i.e.,
\begin{equation}\label{bragen1}
\emph{\textsf{Lie}}(\{f_1,\hdots,f_\nu\})(x_*)=T_{x_*}\mM,
\end{equation}
where $T_{x_*}\mM$ denotes the tangent space to the manifold $\mM$ at $x_*\in\mM$, then the control system \eqref{sistemaintro1} is STLC from $x_*$.
\end{theorem}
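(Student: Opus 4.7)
The plan is to build, for each sufficiently small $\tau>0$, a continuous map from a neighborhood of $0$ in $\rr^n$ into $\text{\textsf{Reach}}(\tau,x_*)$ whose image contains an open neighborhood of $x_*$; small-time local controllability from $x_*$ will then follow immediately. Working in a local chart, I first reduce to the case that $\mM$ is an open subset of $\rr^n$ and $x_*=0$, and by the bracket-generating hypothesis \eqref{bragen1} I select iterated Lie brackets $X_1,\ldots,X_n\in\text{\textsf{Lie}}(\{f_1,\ldots,f_\nu\})$, of lengths $\ell_1,\ldots,\ell_n\geq 1$ respectively, such that $X_1(x_*),\ldots,X_n(x_*)$ form a basis of $T_{x_*}\mM$.

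Next, for each $X_i$ I construct an ``approximating commutator flow'' $\Phi_i(t)$: a concatenation of forward and backward integral curves of $f_1,\ldots,f_\nu$ satisfying, for every $x$ near $x_*$,
\begin{equation*}
\Phi_i(t)(x)=x+tX_i(x)+o(t)\qquad\text{as }t\to 0^+,
\end{equation*}
and whose total parameter time is $O(t^{1/\ell_i})$. The base case $\ell_i=2$ rests on the classical asymptotic identity
\begin{equation*}
e^{-sg}\circ e^{-sf}\circ e^{sg}\circ e^{sf}(x)=x+s^2[f,g](x)+o(s^2)\qquad\text{as }s\to 0^+,
\end{equation*}
from which the reparametrization $s=\sqrt t$ yields the required form; the general case proceeds by induction on bracket length, recursively substituting previously constructed higher-order approximations for the innermost vector fields of the commutator word representing $X_i$.

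Armed with the $\Phi_i$, I define
\begin{equation*}
\Psi(s_1,\ldots,s_n)\colonequals\Phi_n(s_n)\circ\cdots\circ\Phi_1(s_1)(x_*),\qquad s_i\geq 0,
\end{equation*}
and check by the chain rule together with the expansions above that $\Psi$ is differentiable at the origin with $\partial_i\Psi(0)=X_i(x_*)$, so that $D\Psi(0)\colon\rr^n\to T_{x_*}\mM$ is an isomorphism. To pass from the ``orthant image'' $\Psi([0,\delta]^n)$ to an actual open neighborhood of $x_*$, I would either replace each $X_i$ by $-X_i$ whenever needed (noting that $-[f,g]=[g,f]$ and iterating this swap through bracket words) and stitch together the $2^n$ orthant maps, or invoke a Brouwer-degree/topological open-mapping argument for $\Psi$ on $[0,\delta]^n$. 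In either case the resulting neighborhood lies in $\text{\textsf{Reach}}(\tau,x_*)$ for $\tau=C\sum_i\delta^{1/\ell_i}$, which is arbitrarily small as $\delta\to 0^+$.

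The main technical burden lies in the construction of the $\Phi_i$ for higher-order brackets and the differentiability check for $\Psi$: Taylor-expanding a long composition of flows so as to extract precisely the iterated bracket, and controlling the remainders uniformly, is delicate. This is exactly where the $C^\infty$ regularity of the $f_j$'s is classically used, and it is the step that the set-valued bracket framework developed in this paper is designed to relax.
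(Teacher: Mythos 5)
Your proposal is correct and is, in essence, the classical $C^\infty$ specialization of the paper's proof of Theorem~\ref{ChowRas}: your reparametrized approximating commutator flows $\Phi_i$ play the role of the paper's $\Sigma_{B_i}^{\mathbf{g}}$, your composed map $\Psi$ is the paper's $\mathbf{x}^{(B_1,\dots,B_\ell)}$, and your Brouwer-degree/orthant-stitching step corresponds to the GDQ open-mapping theorem (Theorem~\ref{open-t}) applied after the chain rule and Theorem~\ref{GDQ-lem}. The one line to phrase more carefully is ``$\Psi$ is differentiable at the origin with $\partial_i\Psi(0)=X_i(x_*)$'': owing to the fractional reparametrization $s\mapsto s^{1/\ell_i}$, the $\Phi_i$ and hence $\Psi$ are not $C^1$ at $0$, and what one actually obtains is only a first-order orthant expansion $\Psi(s)=x_*+\sum_i s_i X_i(x_*)+o(|s|)$ as $s\to 0^+$ (uniformly near $x_*$), which is weaker than classical differentiability but still sufficient for the degree/open-mapping step you invoke --- and this is precisely the reason the paper replaces the classical differential by the GDQ notion when pushing to lower regularity.
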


In addition, one has Theorem~\ref{CRintro2} below regarding the {\it minimum-time function} $x\mapsto T(x)$. The latter is defined as the smallest time $t\geq 0$ such that the solution $x_{x_*,u}(\cdot)$ of \eqref{sistemaintro1}, starting from $x(0)=x_*$ and corresponding to an admissible control $u=(u_1,\hdots,u_\nu)\colon [0,t]\to\{\pm {\bf e}_1,\hdots,\pm {\bf e}_\nu\}$, satisfies $x(t)=x$. Moreover, in the statement of Theorem~\ref{CRintro2}, by saying that ``\eqref{bragen1} is verified at step $r$'', we mean that the condition $\text{\textsf{Lie}}(\{f_1,\hdots,f_\nu\})(x_*)=T_{x_*}\mM$ is achieved by iterated Lie brackets of {length} less than or equal to $r$, where the \emph{length} of a Lie bracket is the total number of vector fields appearing in it, including possible repetitions (for instance, the Lie brackets $[f_3,f_4]$ and $[f_3,[f_2,[f_5,f_2]]]$ have lengths $2$ and $4$, respectively; in addition, each vector field $f$ is considered an iterated Lie bracket of length one).
\begin{theorem}\label{CRintro2}
Let the hypotheses of Theorem~\ref{CRintro1} hold. If \eqref{bragen1} is verified at \emph{step} $r$, then the \emph{minimum}-\emph{time function} $T\colon \mathcal{O}\to [0,+\infty]$ verifies $T(x)\leq Cd(x,x_*)^{1/r}$ for all $x$ in some neighborhood $\mathcal{O}$ of $x_*$ (endowed with a Riemannian distance $d\colon\mathcal{O}\times\mathcal{O}\to\rr$), with $C>0$ being a suitable constant.
\end{theorem}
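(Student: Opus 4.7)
My approach is to reduce Theorem~\ref{CRintro2} to a quantitative open-mapping argument powered by the classical approximation of iterated Lie brackets via commutator words. Since \eqref{bragen1} holds at step $r$, I first select iterated Lie brackets $X_{I_1},\hdots,X_{I_n}$ built from $\{f_1,\hdots,f_\nu\}$, of lengths $\ell_k\in\{1,\hdots,r\}$, such that $\{X_{I_k}(x_*)\}_{k=1}^n$ is a basis of $T_{x_*}\mM$. The classical commutator-approximation formula then provides, for each $k$ and every sufficiently small $s\geq 0$, admissible controls $u_k^{\pm}\colon[0,N_k s]\to\{\pm\mathbf{e}_1,\hdots,\pm\mathbf{e}_\nu\}$, with $N_k$ depending only on $\ell_k$, such that the corresponding trajectories of \eqref{sistemaintro1} starting at $x_*$ reach $x_*\pm s^{\ell_k}X_{I_k}(x_*)+O(s^{\ell_k+1})$ at time $N_k s$.

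Next I would define a continuous map $\Phi\colon V\to\mM$ on a neighborhood $V\subset\rr^n$ of the origin by setting $\Phi(\sigma_1,\hdots,\sigma_n)$ equal to the endpoint of the concatenation, for $k=1,\hdots,n$, of $u_k^{\mathrm{sgn}(\sigma_k)}$ restricted to $[0,|\sigma_k|^{1/\ell_k}]$. In any local chart centered at $x_*$, iterating the displacement expansions along the concatenation and controlling the cross terms yields
$$
\Phi(\sigma)=x_*+\sum_{k=1}^n\sigma_k\,X_{I_k}(x_*)+O\bigl(|\sigma|^{1+1/r}\bigr)\qquad(\sigma\to 0).
$$
Since the map $\sigma\mapsto\sum_k\sigma_k X_{I_k}(x_*)$ is a linear isomorphism of $\rr^n$ onto $T_{x_*}\mM$, a Brouwer-fixed-point argument applied to the perturbation of this linear part (standard for continuous perturbations with sublinear remainder) yields constants $c,\rho_0>0$ such that, for every $0<\rho\leq\rho_0$, the image $\Phi(\{\sigma\colon|\sigma_k|\leq c\rho \text{ for all } k\})$ contains the Riemannian ball $\{x\colon d(x,x_*)\leq\rho\}$.

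Finally, the total duration of the admissible control producing $\Phi(\sigma)$ equals $\sum_{k=1}^n N_k|\sigma_k|^{1/\ell_k}$. For $|\sigma_k|\leq c\rho\leq 1$ and $1\leq\ell_k\leq r$, each summand is bounded by $N_k(c\rho)^{1/r}$, since $1/\ell_k\geq 1/r$; hence the total time is at most a constant multiple of $\rho^{1/r}$. Taking $\rho\colonequals d(x,x_*)$ for $x$ in a small neighborhood of $x_*$ produces the asserted bound $T(x)\leq C\,d(x,x_*)^{1/r}$.

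The main obstacle is verifying the displayed asymptotic expansion of $\Phi(\sigma)$ with remainder $o(|\sigma|)$: one has to iterate a Campbell--Baker--Hausdorff-type expansion through words of different lengths and keep track of the remainder when the various flows are interleaved, exploiting at each stage that the choice of scaling $|\sigma_k|^{1/\ell_k}$ is precisely what converts the $O(s^{\ell_k})$ commutator approximation into the linear term $\sigma_k X_{I_k}(x_*)$. Because the resulting map $\Phi$ is at best Hölder of exponent $1/r$ at the origin (in the Euclidean norm on $\sigma$), the usual $C^1$ inverse function theorem is unavailable and has to be replaced by the topological surjectivity argument indicated above.
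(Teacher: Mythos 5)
Your proof is correct, and its skeleton is in fact the same one the paper uses for the general Theorem~\ref{ChowRas}, of which Theorem~\ref{CRintro2} is the smooth special case: run, for each chosen bracket, the associated commutator word at the scale $|\sigma_k|^{1/\ell_k}$; concatenate the blocks into an end-point map; show this map has a surjective ``first-order'' approximation built from the bracket values at $x_*$; replace the inverse function theorem by a topological surjectivity argument; and conclude with the reparametrization estimate $\sum_k N_k|\sigma_k|^{1/\ell_k}\le C\max_k|\sigma_k|^{1/r}$. These are precisely the paper's maps $\Sigma_{B_j}^{\hat{\mathbf g}}(t_j)$ and $\mathbf{x}^{(B_1(\hat{\mathbf g}),\hdots,B_\ell(\hat{\mathbf g}))}$, the GDQ $\mathbf{A}^{(B_1(\hat{\mathbf g}),\hdots,B_\ell(\hat{\mathbf g}))}$, the open mapping Theorem~\ref{open-t}, and the time function $\tau^{(B_1(\hat{\mathbf g}),\hdots,B_\ell(\hat{\mathbf g}))}$. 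Where you genuinely diverge is in the formalism: exploiting $C^\infty$ regularity, you use the classical expansion $x+s^{\ell_k}X_{I_k}(x)+O(s^{\ell_k+1})$ (with remainders locally uniform in the base point, which is what tames the cross terms: each block contributes an error $O(|\sigma_k|^{1+1/\ell_k})\le O(|\sigma|^{1+1/r})$, and the drift of the starting point between blocks only perturbs the linear part by $O(|\sigma|^2)$) together with a Brouwer-degree argument applied to $\sigma\mapsto A^{-1}(\Phi(\sigma)-x_*)$; the paper instead encodes the expansion as the asymptotic estimate of Theorem~\ref{as-th}, converts it into a GDQ via Corollary~\ref{GDQ-c} and the chain rule, and gets openness from Theorem~\ref{open-t}, whose proof rests on Cellina's fixed-point theorem --- machinery that is heavier but survives set-valued brackets, merely bounded measurable bracket data, and non-unique flows. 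So your route is the shorter, self-contained classical argument adequate for this smooth statement, while the paper's buys the $L^\infty$ generalization. Two small points to tidy up: the displayed expansion of $\Phi$ (your acknowledged main obstacle) should be written out along the lines just indicated, and the $k$-th control piece should be described as $u_k^{\operatorname{sgn}(\sigma_k)}$ built with parameter $s=|\sigma_k|^{1/\ell_k}$, hence defined on $[0,N_k|\sigma_k|^{1/\ell_k}]$, rather than as a restriction of a fixed control to $[0,|\sigma_k|^{1/\ell_k}]$.
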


A natural question that now arises is to what extent can one weaken the regularity hypothesis stated in Theorem~\ref{CRintro1}. For instance, could the vector fields be assumed to be of class $C^1\cap C^{r-1}$, for some $r\geq 1$, provided that the bracket generating property is verified at step $r$? In particular, such a regularity hypothesis would guarantee that iterated Lie brackets of length less than or equal to $r$ are continuous. In fact, a {Rashevskii-Chow theorem of this type} has been proved in \cite{Bramanti2013}. However, since the local flow of a vector field $g$ exists and is uniquely determined as soon as $g$ is locally Lipschitz continuous, it is reasonable to wonder if some further generalization of the Rashevskii-Chow theorem is still possible by employing an appropriate notion of a ``Lie bracket'' of non-differentiable vector fields. A partial step in this direction (which is limited to Lie brackets of length $2$) has been taken in \cite{Rampazzo2001}, where Theorem~\ref{RC Theorem special} below was proved. To state it, let us introduce the notion of a \emph{set}-\emph{valued Lie bracket} $[g_1,g_2]_{\text{set}}$ of locally Lipschitz continuous vector fields $g_1$ and $g_2$, defined for every $x\in \mM$ as follows:
\begin{equation*}
[g_1,g_2]_{\text{set}}(x)\colonequals\text{\bf co}\left\{v\; : \;v=\lim_{k\to\infty}[g_1,g_2](x+h_k)\right\},
\end{equation*}
where for any subset $W$ of a vector space $V$, the set ${\bf co}~W$ denotes its \emph{convex hull}, and the limit is taken along all sequences $\{h_{k}\}_{k\in\nn}$ converging to ${\bf 0}$ and such that $\{x+h_k\}_{k\in\nn}\subset\textsf{Diff}^{1}(g_1)\cap\textsf{Diff}^{1}(g_2)$ (here, if $g$ is a vector field, $\textsf{Diff}^{1}(g)$ denotes the set of the points where $g$ is differentiable). Observe that, by Rademacher's theorem, the standard single-valued Lie bracket $[g_1,g_2](x)$ is defined for almost every $x\in\mM$, while the set-valued Lie bracket $[g_1,g_2]_{\text{set}}(x)$ is well-defined for every $x\in\mM$, and takes non-empty compact and convex values. Let us also point out that this notion of a set-valued Lie bracket has already proven to be useful in extending --to the setting of non-smooth vector fields (see \cite{Rampazzo2007commutators,Rampazzo2007Frobenius,Angrisani2023})-- commutativity type results, Frobenius' theorem, and also higher-order necessary conditions for optimal control problems. In this setting, the Rashevskii-Chow type result reads as follows: 
\begin{theorem}[$L^\infty$ Rashevskii-Chow theorem for Lie brackets of length 2 \cite{Rampazzo2001}]\label{RC Theorem special}
Let $\{f_1,\hdots,f_\nu\}$ be a family of locally Lipschitz continuous vector fields on a manifold $\mM$, and for some $x_*\in\mM$ and every choice of $v_{k\ell}\in [f_k,f_\ell]_{\emph{set}}(x_*)$, $k,\ell=1,\hdots,\nu$, let the following \emph{bracket-generating condition} be satisfied:
\begin{equation*}
\emph{span}\left\{f_j(x_*),v_{k\ell}\; : \;j\in\{1,\hdots,\nu\},~(k,\ell)\in\{1,\hdots,\nu\}\times\{1,\hdots,\nu\}\right\}=T_{x_*}\mM.
\end{equation*}
Then, the control system \eqref{sistemaintro1} is STLC from $x_*$. Furthermore, the minimum-time function $T\colon \mathcal{O}\to [0,+\infty]$, verifies $T(x)\leq C\,d(x,x_*)^{1/2}$ for all $x$ in some neighborhood $\mathcal{O}$ of $x_*$ (endowed with a Riemannian distance $d\colon\mathcal{O}\times\mathcal{O}\to\rr$), with $C>0$ being a suitable constant.\\
\emph{(See \cite{Rampazzo2001} for a slightly more general version of this theorem, involving also vector fields that are merely continuous).}
\end{theorem}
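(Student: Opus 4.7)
The strategy is to adapt the classical Rashevskii-Chow argument---build a concatenation endpoint map and show it is locally surjective onto a neighborhood of $x_*$---while replacing the smooth inverse function theorem with a set-valued asymptotic formula combined with a topological degree argument. By the bracket-generating hypothesis and compactness of the set-valued brackets, I can extract indices $j_1,\dots,j_p$ and pairs $(k_1,\ell_1),\dots,(k_q,\ell_q)$ with $p+q=n$ such that, for \emph{every} selection $v_{k_i\ell_i}\in[f_{k_i},f_{\ell_i}]_{\text{set}}(x_*)$, the vectors $f_{j_1}(x_*),\dots,f_{j_p}(x_*),v_{k_1\ell_1},\dots,v_{k_q\ell_q}$ form a basis of $T_{x_*}\mM$. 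For parameters $a\in[0,\ve)^p$ and $b\in[0,\ve^2)^q$, I would introduce the concatenation endpoint map
\[
\Psi(a,b)\colonequals\phi^{f_{j_1}}_{a_1}\circ\cdots\circ\phi^{f_{j_p}}_{a_p}\circ Q^{(k_1,\ell_1)}_{\sqrt{b_1}}\circ\cdots\circ Q^{(k_q,\ell_q)}_{\sqrt{b_q}}(x_*),
\]
where $Q^{(k,\ell)}_{\sigma}\colonequals\phi^{-f_\ell}_{\sigma}\circ\phi^{-f_k}_{\sigma}\circ\phi^{f_\ell}_{\sigma}\circ\phi^{f_k}_{\sigma}$ is the classical bracket-approximating quadruple, time-rescaled so that in the smooth case $Q^{(k,\ell)}_{\sqrt b}(x)-x=b\,[f_k,f_\ell](x)+o(b)$. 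The total time employed along this concatenation is controlled by $\sum_i a_i+4\sum_i\sqrt{b_i}\leq C\ve$.

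The crucial technical step is an $L^\infty$ asymptotic commutator formula: for every sequence $\sigma_n\downarrow 0$, every accumulation point of $\sigma_n^{-2}\bigl(Q^{(k,\ell)}_{\sigma_n}(x_*)-x_*\bigr)$ belongs to $[f_k,f_\ell]_{\text{set}}(x_*)$. I would prove this by writing each factor $\phi^{\pm f_m}_\sigma$ through the fundamental theorem of calculus and the Gr\"onwall-type stability of ODEs under local Lipschitz regularity, absorbing the $o(\sigma^2)$ remainders, and composing the four factors while tracking the cancellations that, in the smooth case, produce the classical bracket. The resulting $\sigma^2$-order term can be written as a finite difference of $f_\ell$ and $f_k$ evaluated along arcs converging to $x_*$; along subsequences where these vector fields are simultaneously differentiable (which are of full measure by Rademacher), such finite differences produce values of the single-valued bracket $[f_k,f_\ell]$ at points $y_n\to x_*$, so by the very definition of $[f_k,f_\ell]_{\text{set}}(x_*)$ and its convex closure the claim follows.

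With the asymptotic formula in hand, the conclusion is a topological degree argument. The anisotropic rescaling $a_i=\ve\alpha_i$, $b_j=\ve^2\beta_j$, $(\alpha,\beta)$ in the closed unit ball $\overline B\subset\rr^n$, yields a family of rescaled endpoint maps $F_\ve(\alpha,\beta)\colonequals\ve^{-2}\bigl(\Psi(\ve\alpha,\ve^2\beta)-x_*\bigr)$ that are equicontinuous by standard ODE estimates. By the asymptotic formula, every uniform accumulation map $F_0$ of $F_\ve$ as $\ve\downarrow 0$ has the affine form $F_0(\alpha,\beta)=\sum_i\alpha_i f_{j_i}(x_*)+\sum_i\beta_i v_{k_i\ell_i}$ for some selection $v_{k_i\ell_i}\in[f_{k_i},f_{\ell_i}]_{\text{set}}(x_*)$; by hypothesis every such $F_0$ is a linear isomorphism onto $T_{x_*}\mM$, hence has Brouwer degree $\pm 1$ with respect to any target in a fixed ball $B_\rho\subset T_{x_*}\mM$ (in a chart around $x_*$). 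Stability of the Brouwer degree under uniform approximation then forces $F_\ve(\overline B)\supseteq B_\rho$ for every sufficiently small $\ve$; translating back to unrescaled parameters, $\Psi$ covers a ball of radius $\rho\ve^2$ about $x_*$ in time at most $C\ve$, which simultaneously delivers STLC and the estimate $T(x)\le C\,d(x,x_*)^{1/2}$. The principal obstacle is the asymptotic commutator formula itself, because the non-differentiability of $f_k,f_\ell$ forbids the usual second-order Taylor expansion and forces one to track a \emph{set} of possible limit brackets along the concatenation path; the degree-theoretic conclusion is precisely the mechanism that turns this convex-valued---rather than single-valued---limiting behavior into a genuine local covering property.
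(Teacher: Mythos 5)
Your overall architecture --- a concatenated endpoint map built from flows and bracket-approximating quadruples, an asymptotic formula placing the rescaled quadruple increments into $[f_k,f_\ell]_{\text{set}}(x_*)$, and a topological surjectivity principle --- is the right one and matches the strategy of the paper (which proves the general Theorem~\ref{ChowRas} by showing the set-valued brackets are GDQs of the multi-flows and then invoking a set-valued open mapping theorem). However, your proof has a genuine gap at the very first reduction. You claim that the hypothesis yields a \emph{fixed} sub-collection of $n$ objects (some $f_{j_i}(x_*)$ and some pairs $(k_i,\ell_i)$) such that \emph{every} selection from the corresponding sets is a basis of $T_{x_*}\mM$. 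This does not follow from ``every full selection spans.'' Already for $n=2$ one can exhibit three compact convex sets $K_1,K_2,K_3\subset\rr^2\setminus\{{\bf 0}\}$ whose arcs of directions in the projective line pairwise intersect but have empty triple intersection: then every selection $(v_1,v_2,v_3)$ spans $\rr^2$ (otherwise all three would lie on a common line through the origin, which would belong to all three arcs), yet for each pair $(i,j)$ there is a degenerate selection. So no sub-collection of size $n$ has the ``always a basis'' property, and your degree argument --- which needs the limit maps $F_0\colon\overline B\subset\rr^n\to\rr^n$ to be isomorphisms in order to have a well-defined nonzero Brouwer degree --- cannot get started. This is precisely why the paper works with \emph{all} $\ell$ brackets at once, parametrizes by $\rr^\ell$ with $\ell$ possibly larger than $n$, and replaces degree theory by an open mapping theorem for set-valued maps via GDQs (Theorem~\ref{open-t}, proved through Cellina's extension of Kakutani's fixed-point theorem), which only requires each linear map $A^{(v_1,\hdots,v_\ell)}$ to be \emph{surjective}, not bijective.

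Two further points would need repair even if the reduction were granted. First, a uniform accumulation map $F_0$ of $F_\ve$ is not affine with a fixed selection: the asymptotic formula only gives, pointwise, $F_0(\alpha,\beta)=\sum_i\alpha_i f_{j_i}(x_*)+\sum_i\beta_i v_i(\alpha,\beta)$ with a selection depending on $(\alpha,\beta)$; to run the degree argument you must show $F_0$ is nonvanishing on $\partial\overline B$ and homotope it through selections (using convexity of each $[f_{k_i},f_{\ell_i}]_{\text{set}}(x_*)$) to a genuinely affine map --- fixable, but not what you wrote. Second, your parameter domain $[0,\ve)^p\times[0,\ve^2)^q$ is inconsistent with the closed unit ball used in the rescaled degree argument: negative $\beta_i$ require defining the concatenation for reversed bracket directions via the antisymmetry $[f_\ell,f_k]_{\text{set}}=-[f_k,f_\ell]_{\text{set}}$ (this is exactly what the paper's map $\Sigma_B^{\bf g}(t)$ for $t<0$ takes care of). Your sketch of the asymptotic commutator formula is acceptable as an outline of the key lemma (it is Theorem~4.1 of the cited reference), with the caveat that the quadruple produces averages of $Df_k,Df_\ell$ over small sets rather than pointwise values of the a.e.-defined bracket, which is precisely why the convex hull enters the definition of $[f_k,f_\ell]_{\text{set}}$.
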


In this paper we prove a generalization of Theorem~\ref{RC Theorem special} including set-valued Lie brackets of {\it any length} that are single-valued almost everywhere, besides being locally bounded and measurable. Such a program immediately poses the following two crucial questions:
\begin{enumerate}[{\bf (Q1)}]
\item\emph{How to define a set-valued iterated Lie bracket of length greater than $2$?}
\item\emph{Provided {\bf (Q1)} is answered reasonably, is there some \emph{open mapping result} available, which allows us to obtain a generalization of Theorem~\ref{RC Theorem special}?}
\end{enumerate}

As for question {\bf (Q1)}, it is obviously crucial that the definition of a set-valued Lie bracket should allow for an {asymptotic formula} for the associated {multi}-{flow}. For instance, the Lie brackets $[g_2,g_3]_{\text{set}}$ and $[g_1,[g_2,g_3]]_{\text{set}}$ should be ``approximations'' (in a sense that we make precise subsequently) of the multi-flows:
\begin{equation*}
\Psi(t_2,t_3)(x)\colonequals e^{-t_3g_3}\circ e^{-t_2g_2}\circ e^{t_3g_3}\circ e^{t_2g_2}(x) 
\end{equation*}
and
\begin{equation*}
\Psi(t_1,t_2,t_3)(x)\colonequals\left(e^{-t_2g_2}\circ e^{-t_3g_3}\circ e^{t_2g_2}\circ e^{t_3g_3}\right)\circ e^{-t_1g_1} \circ \left(e^{-t_3g_3}\circ e^{-t_2g_2}\circ e^{t_3g_3}\circ e^{t_2g_2}\right)\circ e^{t_1g_1}(x),
\end{equation*}
respectively, where the mapping $\rr\times\mM\ni(t,x)\mapsto e^{tg}(x)\in\mM$ denotes the flow of a (sufficiently regular) vector field $g$. As a first guess, one could proceed by induction on the length of the Lie bracket. For instance, if the vector field $g_1$ were locally Lipschitz continuous and the vector fields $g_2,g_3$ were differentiable with locally Lipschitz continuous derivatives, then one might want to set {$[g_1,[g_2,g_3]]_{\text{set}}\colonequals [g_1,g]_{\text{set}}$}, where {the vector field $g$} coincides with the locally Lipschitz continuous vector field $[g_2,g_3]$. In fact, this approach does not work, as it does not allow for a satisfactory asymptotic formula (see \cite[Section~7]{Rampazzo2007commutators} for a counterexample). However, a positive answer for this issue, agreeing with the need of a well-functioning asymptotic formula, has been proposed in \cite{Feleqi2017}. Deferring the definition of a set-valued Lie bracket to Section~\ref{s-brackets} in the general case, let us only define this for a particular case of a bracket of length $3$:
\begin{align*}
&[[g_1,g_2],g_3]_{\text{set}}(x)\colonequals\\
&\text{\bf co}\left\{v\; : \;v=\lim_{k\to\infty}Dg_3(x+h_{1k})\cdot [g_1,g_2](x+h_{2k})-D[g_1,g_2](x+h_{2k})\cdot g_3(x+h_{1k})\right\}.
\end{align*}
In the above definition, the limit is taken along all sequences $\{(h_{1k},h_{2k})\}_{k\in\nn}$ converging to ${\bf 0}$ such that $\{(x+h_{1k},x+h_{2k})\}_{k\in\nn}\subset\textsf{Diff}^{1}(g_{3})\times\textsf{Diff}^{1}([g_{1},g_{2}])$. For an integer $r\geq 0$, let us use the notation $C^{r,1}$ to denote the class of vector fields that are $r$-times continuously differentiable and whose $r$-$th$ order derivatives are locally Lipschitz continuous. Hence, in the above definition we are assuming that $g_1,g_2\in C^{1,1}$ and $g_3\in C^{0,1}$. Let us carefully note that in the above definition, \emph{two independent sequences of points} have been used instead of just a \emph{single one} (as it would have been the case, if one merely used an inductive procedure to define a set-valued Lie bracket of length $3$). Actually, the number of independent sequences of points to be used in defining a set-valued Lie bracket in the general case depends on its ``structure'' rather than its length, and this is explained rigorously in Section~\ref{s-brackets}. The case of set-valued Lie brackets of length $4$ already highlights this point since, for instance, the definition of the set-valued Lie bracket $[[g_1,g_2],[g_3,g_4]]_{\text{set}}$ uses two independent sequences of points, while the definition of the set-valued Lie bracket $[g_1,[g_2,[g_3,g_4]]]_{\text{set}}$ uses three such ones (see Examples~\ref{examples}). Moreover, to deal with the general case of a set-valued Lie bracket of any finite length greater than $2$, so as to ensure that the resulting vector field is single-valued almost everywhere, locally bounded and measurable, we need the notion of a ``formal iterated bracket'' $B$ and the associated concept of ``$C^{B-1,1}$-regularity'' of a finite family of vector fields (see Section~\ref{s-brackets}). As proved in \cite{Feleqi2017}, these iterated set-valued brackets are chart-independent and a suitable asymptotic formula holds for them, recalled here in Theorem~\ref{as-th}.

As for question {\bf (Q2)}, we rely on the theory of \emph{generalized differential quotients} (GDQs), introduced by H. J. Sussmann in \cite{Sussmann2000new, Sussmann2000resultats, Sussmann2008}. To apply the chain rule and an open mapping result valid within the framework of this theory, we need to show that a set-valued Lie bracket (of any finite length) is indeed a GDQ of the associated multi-flow. This involves combining the said asymptotic formula with a sufficient condition ensuring that a specific subset of suitable linear mappings is a GDQ. The said sufficient condition is proved in Theorem~\ref{sucole}.

Relying partly on intuition and referring the reader to the main body of the paper, we now present a general \emph{bracket}-\emph{generating condition}, under which the main result of this paper is proved.\footnote{For the sake of brevity, here and also in the title of the paper, we write ``$L^\infty$'' to mean \emph{locally bounded and measurable}, although a more correct notation to use would be ``$L_{\text{loc}}^\infty$''.} In this definition, the notation $\dime(B)$ stands for the \emph{highest degree of differentiation involved in the formal iterated bracket $B$} (see Section~\ref{s-brackets}).
\begin{definition}[$L^\infty$ bracket-generating condition]\label{brackgenintro}
Let $B_1,\hdots,B_\ell$ be formal {iterated} brackets. For a given integer $\nu\geq 1$, we say that a finite family $\{g_1,\hdots,g_\nu\}$ of vector fields on a finite-dimensional manifold $\mM$ {of class $C^q$, for some $q\geq\max\left\{1,\dime(B_j)\; : \;j\in\{1,\hdots,\ell\}\right\}$,} is \emph{$L^\infty$ bracket}-\emph{generating at a point ${x}\in \mM$ with respect to the formal brackets $B_1,\hdots,B_\ell$}, if there exists an integer $r\geq 1${, with $r\leq\nu$,} such that the $r$-tuple $\hat{\mathbf g}=(g_1,\hdots,g_r)$ verifies the following conditions:
\begin{enumerate}[(i)]
\item{For every $j\in\{1,\hdots,{\ell}\}$, one has
\begin{equation*}
{\bf\hat g}\in\begin{cases}
C^{B_j}, &\emph{if length of $B_j=1$},\\
C^{B_j-1,1}, &\emph{if length of $B_j>1$};
\end{cases}
\end{equation*}}
\item{For {every} {$\ell$-tuple} $(v_1,\hdots,v_\ell)\in {(B_1)}_\emph{set}({\bf\hat g})({x})\times\cdots\times {(B_\ell)}_\emph{set}({\bf\hat g})({x})$, one has
\begin{equation*}
\emph{span}\left\{v_1,\hdots,v_\ell\right\}=T_x\mM.
\end{equation*}}
\end{enumerate}
\end{definition}

We are now ready to state the main result (see Theorem~\ref{ChowRas}):
\begin{theorem}[$L^\infty$ Rashevskii-Chow theorem]\label{RC Theorem general}
Let $B_1,\hdots,B_\ell$ be formal iterated brackets. For a given integer $\nu\geq 1$, if the family $\{f_1,\hdots,f_\nu\}$ of vector fields on a finite-dimensional manifold $\mM$ {of class $C^q$, for some integer $q\geq\max\left\{1,\dime(B_j)\; : \;j\in\{1,\hdots,\ell\}\right\}$,} is $L^\infty$ bracket-generating at $x_*\in\mM$ with respect to the formal brackets $B_1,\hdots,B_\ell$, then the control system \eqref{sistemaintro1} is STLC from $x_*$. Furthermore, the minimum-time function $T\colon \mathcal{O}\to [0,+\infty]$ verifies $T(x)\leq Cd(x,x_*)^{1/r}$ for all $x$ in some neighborhood $\mathcal{O}$ of $x_*$ (endowed with a Riemannian distance $d\colon\mathcal{O}\times\mathcal{O}\to\rr$), with $C>0$ being a suitable constant and ${r}\colonequals\max\left\{\emph{\textsf{\LL}}(B_j)\; : \;j\in\{1,\hdots,\ell\}\right\}$.
\end{theorem}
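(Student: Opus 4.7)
The plan is to reduce Theorem~\ref{RC Theorem general} to a set-valued open mapping theorem in the framework of generalized differential quotients (GDQs), using the multi-flows associated with the formal brackets $B_1,\ldots,B_\ell$ to build a parametrized family of admissible trajectories. The decisive feature of hypothesis (ii) in Definition~\ref{brackgenintro} is its \emph{robustness}: \emph{every} admissible selection $v_k\in (B_k)_{\text{set}}(\hat{\mathbf g})(x_*)$ yields an $\ell$-tuple whose linear span equals $T_{x_*}\mM$. This uniform spanning is exactly what is needed for the open mapping of a set-valued linearization to go through, and it is why working with all $\ell$ brackets simultaneously (rather than passing to a basis) is convenient.

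\textbf{Construction of the parameter map.} Set $\ell_k\colonequals\text{\textsf{\LL}}(B_k)$ and recall $r=\max_{1\le k\le\ell}\ell_k$. For each $k$, let $\Psi_k(\tau)(\cdot)$ denote the multi-flow associated with $B_k$, namely the concatenation of forward/backward integral curves of the components of $\hat{\mathbf g}$ whose combinatorial pattern matches the nested commutator structure of $B_k$, in the manner illustrated by the displays preceding Definition~\ref{brackgenintro}. Each $\Psi_k(\tau)$ is an admissible trajectory of total duration $O(|\tau|)$. Form the $\ell$-parameter endpoint map
\begin{equation*}
\Phi(\sigma_1,\ldots,\sigma_\ell)\colonequals\Psi_1\!\bigl(\sigma_1^{1/\ell_1}\bigr)\circ\cdots\circ\Psi_\ell\!\bigl(\sigma_\ell^{1/\ell_\ell}\bigr)(x_*),
\end{equation*}
with the customary sign convention when $\ell_k$ is even, so that both orientations of the displacement along $B_k$ are reachable. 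By the asymptotic formula (Theorem~\ref{as-th}), each block admits an expansion of the form $\Psi_k(\tau)(y)=y+\tau^{\ell_k}w_k+o(\tau^{\ell_k})$ for some $w_k\in (B_k)_{\text{set}}(\hat{\mathbf g})(y)$, and consequently the formal linearization of $\Phi$ at $\sigma=0$ is the set of linear maps
\begin{equation*}
\mathcal{L}\colonequals\left\{\rr^\ell\ni\sigma\mapsto\sum_{k=1}^\ell \sigma_k w_k\in T_{x_*}\mM \; : \; w_k\in (B_k)_{\text{set}}(\hat{\mathbf g})(x_*)\right\},
\end{equation*}
every element of which is surjective by hypothesis (ii).

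\textbf{Open mapping, time estimate, and main obstacle.} To promote this heuristic into a proof, I would apply the sufficient condition of Theorem~\ref{sucole} to show that each block $\Psi_k$ carries $(B_k)_{\text{set}}(\hat{\mathbf g})(x_*)$ as a GDQ at $x_*$, and then invoke the GDQ chain rule across the $\ell$ blocks to conclude that $\mathcal{L}$ is a GDQ of $\Phi$ at $0$. Sussmann's open mapping theorem for GDQs, applied to $\Phi$ with the uniformly surjective GDQ $\mathcal{L}$, then yields that $\Phi$ sends every neighborhood of $0$ onto a neighborhood of $x_*$ — which is precisely STLC, since $\Phi$ takes values in the reachable set. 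For the time estimate, note that if $|\sigma|\le\rho^r$ with $\rho\le 1$, then $|\sigma_k|^{1/\ell_k}\le\rho^{r/\ell_k}\le\rho$ for each $k$, so the admissible concatenation realizing $\Phi(\sigma)$ has total duration $O(\rho)$; the quantitative form of the GDQ open mapping theorem simultaneously guarantees that $\Phi(\{|\sigma|\le\rho^r\})$ covers a Riemannian ball of radius $c\rho^r$ around $x_*$, and inverting this correspondence delivers $T(x)\le C\,d(x,x_*)^{1/r}$. The main technical obstacle is the middle step: iterating the GDQ chain rule across multi-flow blocks of \emph{different} lengths $\ell_k$, and hence different scalings $\sigma_k^{1/\ell_k}$, while honoring the fact that distinct formal brackets may require genuinely different numbers of independent sequences of points — the subtlety highlighted in the introduction by contrasting $[g_1,[g_2,[g_3,g_4]]]_{\text{set}}$ with $[[g_1,g_2],[g_3,g_4]]_{\text{set}}$. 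This is precisely where Theorem~\ref{sucole} does the heavy lifting.
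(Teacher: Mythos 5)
Your proposal is correct and follows essentially the same route as the paper's proof of Theorem~\ref{ChowRas}: reparametrized multi-flow blocks $\Sigma_{B_j}^{\hat{\bf g}}$ composed into an $\ell$-parameter endpoint map, the asymptotic estimate of Theorem~\ref{as-th} combined with Theorem~\ref{sucole} (via Corollary~\ref{GDQ-c}, packaged in the paper as Theorem~\ref{GDQ-lem}) to exhibit the set-valued brackets as GDQs, the GDQ chain rule and open mapping theorem with its linear-rate property for STLC, and the same $\theta\mapsto\theta^{1/r}$ bookkeeping of total trajectory duration for the H\"older estimate on $T$. The only detail to tighten is that each block's GDQ must be taken jointly in $(t,x)$ (as in ${\bf L}^{B_{\text{set}}({\bf g})(x_*)}\subset\textsf{Lin}(\rr\times T_{x_*}\mM,T_{x_*}\mM)$) so that the chain rule across blocks applies, which is exactly how the paper sets it up.
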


We refer the reader to Example~\ref{esempio} for an application of Theorem~\ref{RC Theorem general}. Finally, in Theorem~\ref{wChowRas} we state a generalization of Theorem~\ref{RC Theorem general}, denominated as ``A non-deterministic version of the $L^\infty$ Rashevskii-Chow theorem'', and concerns the case when some of the vector fields $\{f_1,\hdots,f_\nu\}$ are merely continuous (so that a lack of uniqueness of solutions for the corresponding Cauchy problems may occur).

\section{Set-Valued Iterated Lie Brackets}\label{s-brackets}
 In this section, we recall the notion and the main properties of the set-valued iterated Lie bracket introduced in \cite{Feleqi2017}.

\subsection{Formal Brackets}\label{formalsec}
Given a sequence ${\bf X}=\{X_j\}_{j\in\nn}$ of distinct objects called \emph{indeterminates} or \emph{variables}, let $W({\bf X})$ {denote} the set of all \emph{words}{, where a word is a finite ordered string that can be obtained by using the variables $X_j$, with $j\in\nn$}, the left (square) bracket, the right (square) bracket, and a comma. The \emph{letter sequence} of a word {$w\in W({\bf X})$, denoted by $\textsf{Seq}(w)$, is the sequence} obtained from $w$ by deleting all the brackets and commas. The {\emph{length} of a word $w\in W({\bf X})$, denoted by $\text{\textsf{\LL}}(w)$,} is the total number of variables appearing in {$\textsf{Seq}(w)$}.

We are primarily concerned with a special type of words {called} \emph{formal iterated brackets}, or sometimes, for the sake of brevity, called only \emph{formal brackets}. A single variable is also called a \emph{formal bracket of length $1$}. We define the formal iterated brackets, recursively as follows. We say that, if $B_1$ and $B_2$ are formal brackets, the word {$[B_1,B_2]$} is the \emph{formal iterated bracket with factorization} $(B_1,B_2)$. Note that $\text{\textsf{\LL}}([B_1,B_2])=\text{\textsf{\LL}}(B_1)+\text{\textsf{\LL}}(B_2)$. Hence, the length of a bracket is equal to the number of commas plus 1. Any sub-string of a formal bracket $B$, which itself is a formal iterated bracket, is called a \emph{sub}-\emph{bracket} of $B$. We adopt the following convention:
\begin{enumerate}[{$\bullet$}]
\item{If $B$ is a formal bracket, the sequence $\textsf{Seq}(B)$ is the word obtained by $B$ by deleting all commas and brackets. Namely, if $\textsf{Length}(B)=m$, for some integer $\mu>0$, then $\textsf{Seq}(B)$ will have the form $X_\mu X_{\mu+1}\hdots X_{\mu+m}$ . Notice that this implies a restriction on the possible choices for the letter sequences of the sub-brackets $B_1$ and $B_2$, since if $B=[B_1,B_2]$ and $X_{\mu+m_1}$ is the last letter in the letter sequence $\textsf{Seq}(B_1)$ for some integers $\mu>0$ and $m_1\geq 0$, then the first letter in the letter sequence $\textsf{Seq}(B_2)$, must be $X_{\mu+m_1+1}$. For instance, the brackets $B_1\colonequals [X_3,[X_4,X_5]]$ and $B_2\colonequals [X_5,X_6]$ cannot be the factorization of a formal bracket $B$, for then one would have $B=[B_1,B_2]=[[X_3,[X_4,X_5]],[X_5,X_6]]$, i.e., $B$ is not a formal bracket, according to our convention.}
\end{enumerate}

\subsection{Basic Sub-Brackets and Diff-Degree}
We begin with the following definition:
\begin{definition}\label{bbdd}
Let $B$ be a formal bracket. We say that a sub-bracket $S$ of $B$ is a \emph{basic sub}-\emph{bracket}, if it satisfies the following conditions:
\begin{enumerate}[(i)]
\item{$\emph{\textsf{\LL}}(S)\leq 2$;}
\item{If $\emph{\textsf{\LL}}(S)=1$, i.e., $S=X_j$ for some $j \in \nn$, then neither $[X_{j-1},X_j]$ nor $[X_j,X_{j+1}]$ is a sub-bracket of $B$.}
\end{enumerate}
The \emph{diff}-\emph{degree} of a {formal} bracket $B${, denoted by $\dime(B)$,} is the number of basic sub-brackets of $B$.
\end{definition}

\begin{remark}
The use of the terminology ``diff-degree of a formal bracket $B$'' in Definition~\ref{bbdd}, refers to the fact that it coincides with the largest number of differentiation {operations} showing up when the variables of $B$ are replaced with vector fields. The diff-degree plays a crucial role in determining the limiting procedure needed for defining a set-valued iterated Lie bracket (see {Definition~\ref{set-bf}}).
\end{remark}

For example, if $B\colonequals [[X_2,X_3],X_4]$ is a formal bracket, then the sub-brackets $[X_2,X_3]$ and $X_4$ are basic {sub-brackets} of {$B$}, while {the sub-brackets} $X_2$ and $X_3$ are not basic, {which implies that $\dime(B)=2$}. For any formal bracket $B$, one has
\begin{equation*}
\dime(B)=1~\text{if and only if}~\textsf{\LL}(B)\leq 2
\end{equation*}
and furthermore, for every {formal} bracket $B$ with $\textsf{\LL}(B)\geq 2$, one has
\begin{equation*}
\dime(B)\leq\textsf{\LL}(B)-1=\text{number of commas in $B$}.
\end{equation*}
For instance, the formal brackets $B_{1}\colonequals [[X_3,X_4],[X_5,X_6]]$ and $B_{2}\colonequals [[[X_3,X_4],X_5],X_6]$, verify $\textsf{\LL}({B_{1}})=\textsf{\LL}({B_{2}})=4$, $2=\dime({B_{1}})=\textsf{\LL}({B_{1}})-2$ and $3=\dime({B_{2}})=\textsf{\LL}({B_{2}})-1$. {For any formal bracket $B$ with $\textsf{\LL}(B)>2$,} the diff-degree is additive with respect to the factorization of $B$, i.e., if $B=[B_1,B_2]$, then
\begin{equation*}\label{add-form}
\dime(B)=\dime(B_1)+\dime(B_2).
\end{equation*}

If $B$ is a {formal} bracket and {$S$ is a sub-bracket of $B$}, let us use the notation $\dime(S;B)$ to denote {the number of required differentiation operations of $S$ in $B$ (once the variables in $B$ are replaced with vector fields). More precisely, we have the following definition:
\begin{definition}
If $B$ is a formal bracket and $S$ is a sub-bracket of $B$, then:
\begin{enumerate}[(i)]
\item{{First, we set $\dime(B;B)\colonequals 0$;}}
\item{Then, we set $\dime(S_1;B)\colonequals\dime(S_2;B)\colonequals\dime([S_1,S_2];B)+1$.}
\end{enumerate}
\end{definition}

For example, if $B\colonequals [X_3,[X_4,X_5]]$ is a formal bracket, then $\dime(X_3;B)=1$, $\dime(X_4;B)=2$, $\dime(X_5;B)$ $=2$ and $\dime([X_4,X_5];B)=1$. It is not too hard to verify that $\dime(S;B)$ is equal to the number of right brackets that occur in the formal bracket $B$ to the right of the sub-bracket $S$ of $B$ minus the number of left brackets that occur in $B$ to the right of $S$. {It is also clear that} if $(B_1,B_2)$ is the factorization of {a formal bracket} $B$, with $\textsf{Seq}(B_1)=X_{\mu+1}\hdots X_{\mu+m_1}$ and {$\textsf{Seq}(B_2)=X_{\mu+m_1+1}\hdots X_{\mu+m_1+m_2}$} for some {integers} $\mu\geq 0$ {and} {$m_1,m_2\geq 1$}, {then}
\begin{equation*}
\dime(X_j;B)=\begin{cases}
\dime(X_j;B_1)+1, &\text{if $j\in\{\mu+1,\hdots,\mu+m_1\}$},\\
\dime(X_{j};B_2)+1, &\text{if $j\in\{\mu+m_1+1,\hdots,\mu+m_1+m_2\}$},
\end{cases}
\end{equation*}
and moreover, if $X_j$ is a {variable} of a sub-bracket $S$ of $B$ for some $j\in\{\mu+1,\hdots,\mu+m_1+m_2\}$, then
\begin{equation*}
\dime(X_j;B)=\dime(X_j;S)+\dime(S;B).
\end{equation*}
For a formal bracket $B$, the relationship between the numbers $\dime(B)$ and $\dime(X;B)$, such that $X$ is a variable of $B$, is given by the following result:
\begin{proposition}
For any {formal} bracket $B$, {the following equality holds:}
\begin{equation*}
\dime(B)=\max\left\{\dime(X;B)\; : \;\emph{$X$ is a variable of $B$}\right\}.
\end{equation*}
\end{proposition}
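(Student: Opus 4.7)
My plan is to argue by induction on the length $\textsf{\LL}(B)$ of the formal bracket, leaning on the two identities recalled immediately above the proposition: the recursive rule $\dime(X_j;B) = \dime(X_j;B_i) + 1$ whenever $X_j$ is a variable belonging to the factor $B_i$ of $B = [B_1,B_2]$, and the additivity $\dime(B) = \dime(B_1) + \dime(B_2)$ valid whenever $\textsf{\LL}(B) > 2$.

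For the base case $\textsf{\LL}(B) = 2$, write $B = [X_i,X_{i+1}]$. Neither $X_i$ nor $X_{i+1}$ is basic in $B$, because in each case the length-2 sub-bracket $[X_i,X_{i+1}] = B$ itself witnesses the failure of condition~(ii) of Definition~\ref{bbdd}. Hence $B$ is its own unique basic sub-bracket, giving $\dime(B) = 1$, while the recursive rule yields $\dime(X_i;B) = \dime(X_{i+1};B) = 1$, so the equality holds.

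For the inductive step with $\textsf{\LL}(B) \geq 3$, factor $B = [B_1,B_2]$ and invoke the inductive hypothesis on each factor of length at least $2$, producing a variable $Y_i \in B_i$ with $\dime(Y_i;B_i) = \dime(B_i)$; the recursive rule then promotes $Y_i$ to $\dime(Y_i;B) = \dime(B_i) + 1$ inside $B$. Factors of length~$1$ are handled directly by the conventions, since a single variable coincides with its own factor and achieves its diff-degree. The inequality $\max\{\dime(X;B) : X \text{ variable of } B\} \leq \dime(B)$ then follows by chaining the recursive rule with the inductive bound on each $B_i$ and with $\dime(B_{3-i}) \geq 1$.

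The main obstacle I anticipate is the reverse inequality: a naive variable-wise promotion of inductive maxima supplies only $\max(\dime(B_1),\dime(B_2)) + 1$, whereas additivity demands the full sum $\dime(B_1) + \dime(B_2)$. Closing this gap will require a finer structural analysis, most naturally carried out via the combinatorial characterization of $\dime(S;B)$ as the difference between the number of right and the number of left brackets occurring in $B$ to the right of $S$, recalled earlier in the text. Concretely, I would track how each basic sub-bracket along a suitably chosen maximal-depth branch of the tree of $B$ contributes precisely one unit to the depth of a terminal variable, thereby exhibiting a variable of $B$ whose depth realises the additive sum $\dime(B_1) + \dime(B_2)$ and completing the induction.
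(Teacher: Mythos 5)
Your proposal correctly handles the base case and the inequality $\max\{\dime(X;B)\}\leq\dime(B)$, and your diagnosis of ``the main obstacle'' is exactly on target --- but the obstacle is not one that a finer structural analysis can remove, because the reverse inequality is false in general. The two quantities obey incompatible recursions: by the additivity recalled in the paper, $\dime([B_1,B_2])=\dime(B_1)+\dime(B_2)$ whenever $\text{\textsf{\LL}}(B)>2$, whereas the maximal depth of a variable satisfies a ``max plus one'' recursion across the factorization. These coincide only when, at every factorization occurring inside $B$, at least one factor has diff-degree $1$ --- which happens to be the case for every example worked out in the paper (they are all comb-shaped), but not for balanced brackets. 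Concretely, take $B=[[[X_1,X_2],[X_3,X_4]],[[X_5,X_6],[X_7,X_8]]]$: its basic sub-brackets are exactly the four length-two sub-brackets $[X_1,X_2],[X_3,X_4],[X_5,X_6],[X_7,X_8]$ (no single variable is basic, since each sits inside one of these), so $\dime(B)=4$, in agreement with additivity ($2+2$); yet every variable lies at depth $3$, so $\max\{\dime(X;B)\}=3$. A six-variable counterexample is $[[[X_1,X_2],X_3],[X_4,[X_5,X_6]]]$, where $\dime(B)=4$ but the maximal depth is again $3$; and even the degenerate case $\text{\textsf{\LL}}(B)=1$ fails, since $\dime(X_j)=1$ while $\dime(X_j;X_j)=0$.

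The step of your plan that would fail is therefore precisely the one you deferred: there is no variable of the balanced bracket above whose depth realizes the additive sum $\dime(B_1)+\dime(B_2)$. Walking down a maximal-depth branch of the tree picks up exactly one unit per level, so it can only ever recover $\max(\dime(B_1),\dime(B_2))+1$, never the sum; the right--left bracket-counting characterization of $\dime(S;B)$ computes the same depth and cannot help. What your argument genuinely establishes is the one-sided bound $\max\{\dime(X;B)\}\leq\dime(B)$ for $\text{\textsf{\LL}}(B)\geq 2$, with equality characterizing the comb-like brackets. The discrepancy reflects a tension already present in the text: the remark following the definition describes $\dime(B)$ as the largest number of differentiation operations (the maximal depth), while the formal definition counts basic sub-brackets (an additive quantity), and the proposition asserts that these agree. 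The honest outcome of your analysis should be a counterexample (or a restriction of the statement to brackets in which every factorization has a factor of length at most two), not a completed induction.
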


\subsection{Bracket-Regularity for Tuples of Vector Fields}\label{familyclass}
Let $\mu,m,r$ be integers such that $\mu\geq 0$, $m\geq 1$ and $r\ge \mu+m$. {For} any {formal} bracket $B$ with {$\textsf{Seq}(B)=X_{\mu+1}\hdots X_{\mu+m}$} and for any {$r$-tuple ${\bf g}=(g_1,\hdots,g_{r})$ of vector fields}, we use the notation $B({\bf g})$ {to denote the vector field that one obtains}, when each variable $X_j$, $j=\mu+1,\hdots,\mu+m$, that appears in $B$, is replaced with the corresponding vector field $g_j$, $j=\mu+1,\hdots,\mu+m$. To make this precise, let us begin with the following definition:

\begin{definition}[Regularity of vector fields]
Let {$k\geq 0$ be an integer}. We say {that} a vector field $g$ on a finite-dimensional manifold $\mM$ {of class $C^{k+1}$} (respectively, $C^{k+2}$), is of class $C^k$ (respectively, $C^{k,1}$), and write $g\in C^{k}$ (respectively, $g\in C^{k,1}$), if it is {$k$-times} continuously differentiable and its $k$-th order derivative is continuous (respectively, locally Lipschitz continuous).\footnote{We write ``$C^{k,1}$'' for the sake of notational convenience, although a more correct notation to use would be ``$C_{\text{loc}}^{k,1}$''. Also, as is customary, the 0-$th$ order derivative of a map $g$, is defined to be $g$ itself.}
\end{definition}

In what follows, we {adopt} the following convention:
\begin{enumerate}[$\bullet$]
\item When considering a manifold $\mM$, together with the formal brackets $B_1,\hdots,B_\ell$, we always mean that $\mM$ is a finite-dimensional manifold of class $C^q$, for some integer $q\geq\max\left\{1,\dime(B_j)\; : \;j\in\{1,\hdots,\ell\}\right\}$.
\end{enumerate}

The following definition is also required in what is to follow shortly:
\begin{definition}[Bracket-Regularity for tuples of vector fields]\label{asindef}
Let $B$ be a {formal} bracket{, with $\emph{\textsf{Seq}}(B)=X_{\mu+1}\hdots X_{\mu+m}$ for some integers $\mu\geq 0$ and {$m\geq 1$}}. {Moreover, let} ${\bf g}=(g_1,\hdots,g_{r})$ be an {$r$-tuple} of vector fields on a manifold $\mM$, with $r\geq \mu+m$ and let $k\geq 0$ be an integer.
\begin{enumerate}[(i)]
\item{When $\emph{\textsf{\LL}}(B)=1$, i.e., $B=X_{\mu+1}$ , we say that ${\bf g}$ is of class $C^{B+k}$ (respectively, $C^{B+k-1,1}$, provided that $k\geq 1$), if the vector field $g_{\mu +1}$, is of class $C^{k}$ (respectively, $C^{k-1,1}$);}
\item{When $\emph{\textsf{\LL}}(B)>1$, we say that ${\bf g}$ is of class $C^{B+k}$ (respectively, $C^{B+k-1,1}$), if the vector field $g_j$ is of class $C^{\dime(X_j;B)+k}$ (respectively, $C^{\dime(X_j;B)+k-1,1}$) for every $j\in\{\mu+1,\hdots,\mu+m\}$.}
\end{enumerate}
We also sometimes write ${\bf g}\in C^{B+k}$ (respectively, ${\bf g}\in C^{B+k-1,1}$) to indicate that ${\bf g}$ is of class $C^{B+k}$ (respectively, $C^{B+k-1,1}$).
\end{definition}

For {instance}, if $B\colonequals [[X_3,X_4],[[X_5,X_6],X_7]]$ is a formal bracket and ${\bf g}=(g_1,\hdots,g_8)$ is a $8$-tuple of vector fields, then ${\bf g}\in C^{B+3}$ (respectively, ${\bf g}\in C^{B+2,1}$), {if} the vector fields $g_3,g_4,g_7\in C^{5}$ (respectively, $C^{4,1}$) and $g_5,g_6\in C^{6}$ (respectively, $C^{5,1}$). It is not too hard to verify the following result:
\begin{proposition}
{Let the notations be as in Definition~\ref{asindef}, with the additional requirement that $\emph{\textsf{\LL}}(B)>1$. Furthermore, let $(B_1,B_2)$ be the factorization of $B$.} Then ${\bf g}\in C^{B+k}$ (respectively, ${\bf g}\in C^{B+k-1,1}$) if and only if ${\bf g}\in C^{B_1+k+1}\cap C^{B_2+k+1}$ (respectively, ${\bf g}\in C^{B_1+k,1}\cap C^{B_2+k,1}$).
\end{proposition}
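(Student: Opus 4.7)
The plan is to reduce the claimed equivalence to a pointwise check on each individual $g_j$, and then invoke the additivity identity for the diff-degree recorded in the paragraph preceding the proposition, namely
\[
\dime(X_j;B)=\dime(X_j;B_i)+1
\]
whenever $X_j$ is a variable of $B_i$, for $i\in\{1,2\}$.

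First I would apply Definition~\ref{asindef}(ii) to $B$ (which has length greater than $1$ by hypothesis) to rewrite ${\bf g}\in C^{B+k}$ (respectively ${\bf g}\in C^{B+k-1,1}$) as the conjunction, over $j\in\{\mu+1,\ldots,\mu+m\}$, of the pointwise conditions $g_j\in C^{\dime(X_j;B)+k}$ (respectively $g_j\in C^{\dime(X_j;B)+k-1,1}$). Splitting this conjunction according to which of $B_1,B_2$ contains $X_j$ and substituting the additivity identity above turns each pointwise condition into $g_j\in C^{\dime(X_j;B_i)+k+1}$ (respectively $g_j\in C^{\dime(X_j;B_i)+k,1}$).

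It then suffices to check that these are exactly the pointwise unfoldings of ${\bf g}\in C^{B_i+k+1}$ and ${\bf g}\in C^{B_i+k,1}$, which requires only a short case-split on $\textsf{\LL}(B_i)$. When $\textsf{\LL}(B_i)>1$, Definition~\ref{asindef}(ii) applied to $B_i$ delivers exactly the above conditions. When $B_i$ is a single variable $X_{j_0}$, one has $\dime(X_{j_0};B_i)=0$, and Definition~\ref{asindef}(i), invoked with its internal parameter equal to $k+1$ (which is at least $1$ even when $k=0$, so the Lipschitz clause is legitimate), translates ${\bf g}\in C^{B_i+k+1}$ and ${\bf g}\in C^{B_i+k,1}$ into $g_{j_0}\in C^{k+1}$ and $g_{j_0}\in C^{k,1}$, respectively --- precisely what was obtained above.

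The argument is pure bookkeeping; the only potential obstacle is keeping the two length regimes of $B_i$ separate and noticing that the ``$k$'' internal to Definition~\ref{asindef}(i) equals $k+1$ in the present setting, so the inequality $k\geq 1$ required by that definition in the Lipschitz case is automatic.
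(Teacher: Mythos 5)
Your argument is correct and is exactly the intended verification: the paper offers no proof (it only remarks that the statement "is not too hard to verify"), and your unfolding of Definition~\ref{asindef} combined with the additivity identity $\dime(X_j;B)=\dime(X_j;B_i)+1$ is the natural route. The care you take with the length-one case of $B_i$ (where the internal parameter of Definition~\ref{asindef}(i) becomes $k+1\geq 1$, so the Lipschitz clause is admissible) is precisely the one point worth spelling out, and you handle it correctly.
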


In the following definition, the idea of ``replacing variables with vector fields'' is made precise:
\begin{definition}\label{plug}\footnote{{Although} this definition uses local coordinates, it turns out to be chart-independent.}
For {given} integers {$r,\mu\ge 0$ and $m\geq 1$} such that {$r\geq\mu+m$}, let $B$ be a formal bracket such that {$\emph{\textsf{Seq}}(B)=X_{\mu+1}\hdots X_{\mu+m}$}, and let ${\bf g}=(g_1,\hdots,g_r)$ be an $r$-tuple of {vector} fields of class $C^{B-1,1}$ (respectively, $C^{B}$, if $\emph{\textsf{\LL}}(B)=1$) on a manifold $\mM$. We define the vector field $B(\mathbf{g})$, in a recursive manner as follows:
\begin{enumerate}[(i)]
\item{First, for any sub-bracket $S$ of $B$, with $\emph{\textsf{\LL}}(S)=1$, i.e., $S=X_j$ for some $j\in\{\mu+1,\hdots,\mu+m\}$, {we} set
\begin{equation*}
S({\bf g})(x)\colonequals g_j(x)~\text{for every}~x\in\mM.
\end{equation*}
If $\emph{\textsf{\LL}}(B)=1$, then we are done, otherwise we proceed to item $(ii)$ given below;}
\item{Next, for any sub-bracket $S$ (with $(S_1,S_2)$ being the factorization of $S$) of $B$, with $\emph{\textsf{\LL}}(S)=2$, {we} set
\begin{equation*}
S({\bf g})(x)\colonequals DS_2({\bf g})(x)\cdot S_1({\bf g})(x)-DS_1({\bf g})(x)\cdot S_2({\bf g})(x)~\begin{cases}
\text{for every $x\in\mM$, in {the} case {when} $S\neq B${,}}\\
\text{for almost every $x\in\mM$, in {the} case {when} $S=B$.}
\end{cases}
\end{equation*}
If $\emph{\textsf{\LL}}(B)=2$, then we are done, otherwise we proceed to item $(iii)$ given below;}
\item{Repeat the procedure outlined in item $(ii)$ given above, next for sub-brackets $S$ of $B$, with $\emph{\textsf{\LL}}(S)=3$ and so on in a strictly increasing order, until $\emph{\textsf{\LL}}(S)=\emph{\textsf{\LL}}(B)$.}
\end{enumerate}
\end{definition}

\begin{remark}
Note that Definition~\ref{plug} implies that the vector field $B({\bf g})$ is of class $L^\infty_{\text{loc}}$ (in fact, it is of class $C^{0}$, when $\text{\textsf{\LL}}(B)=1$).
\end{remark}

\subsection{Set-Valued Iterated Lie brackets}\label{setB}
We define set-valued iterated Lie brackets in $\rr^m$ in a chart-dependent fashion. However, the definition turns out to be chart-independent (see \cite{Feleqi2017}), so that it is meaningful on manifolds as well.
\begin{definition}[Set-Valued iterated Lie bracket]\label{set-bf}
For given integers {$r,\mu\geq 0$ and $m\ge 1$} such that {$r\geq\mu+m$}, let $B$ be a formal bracket of $\emph{\textsf{\LL}}(B)=m$ and let ${\bf g}\colonequals(g_1,\hdots,g_r)$ be an $r$-tuple of vector fields of class $C^{B-1,1}$ (respectively, $C^{B}$, if $\emph{\textsf{\LL}}(B)=1$) on $\rr^m$, for some integer $n\ge1$. Let $S_{1},\hdots,S_{d}$, where $d\colonequals\dime(B)$, be the basic sub-brackets of $B$ (which have been ordered lexicographically) and consider a $d$-tuple $\h=(h_1,\hdots,h_d)\in (\rr^m)^d$ of vectors in $\rr^m$. {If $\emph{\textsf{\LL}}(B)>1$, let ${\bf g}^\h\colonequals(g_{\mu+1}^\h,\hdots,g_{\mu+m}^\h)$ be an $m$-tuple of vector fields on $\rr^m$, where for each $j\in\{\mu+1,\hdots,\mu+m\}$, the vector field $g_j^\h$ is defined as follows:}
\begin{equation*}
g_j^\h(x)\colonequals g_j(x+h_i)~\text{for every}~x\in\rr^m,
\end{equation*}
where $i\in\{1,\hdots,d\}$ is the (unique) integer such that the variable $X_j\in S_i$. We now define the \emph{set}-\emph{valued iterated Lie bracket} $B_{\emph{set}}({\bf g})$ as follows:
\begin{enumerate}[(i)]
\item{If $\emph{\textsf{\LL}}(B)=1$, i.e., $B=X_j$ for $j=\mu+1$, then we set
\begin{equation*}
B_{\emph{set}}({\bf g})(x)\colonequals B({\bf g})(x)=g_j(x)~\text{for every}~x\in\rr^m;
\end{equation*}}
\item{If $\emph{\textsf{\LL}}(B)>1$, then we set
\begin{equation*}
B_{\emph{set}}({\bf g})(x)\colonequals{\emph{\bf co}}\left\{v\; : \;v=\lim_{k\to\infty}B\left({\bf g}^{\h_k}\right)(x)\right\}~\text{for every}~x\in\rr^m,
\end{equation*}
where the limit is taken along all sequences $\{\h_k\}_{k\in\nn}=\{(h_{1k},\hdots,h_{dk})\}_{k\in\nn}\subset{(\rr^m)}^d$ converging to ${\bf 0}$ such that $\{(x+h_{1k},\hdots,x+h_{dk})\}_{k\in\nn}\subset\emph{\textsf{Diff}}^{\dime(S_{1};B)}(S_{1}({\bf g}))\times\cdots\times\emph{\textsf{Diff}}^{\dime(S_{d};B)}(S_{d}({\bf g}))$.\footnote{The notation $\textsf{Diff}^{k}(g)$, for an integer $k>1$ denotes the set consisting of the points of $k$-$th$ order differentiability of a map $g$.}}
\end{enumerate}
\end{definition}

\begin{remark}
At first sight, Definition~\ref{set-bf} might appear a bit involved. However, it becomes quite intuitive as soon as one considers some special cases of it, as illustrated by the examples given below.
\end{remark}

\begin{examples}\label{examples}
\begin{enumerate}[(i)]
\item{If $B\colonequals [[X_1,X_2],X_3]$ is a formal bracket and ${\bf g}=(g_1,g_2,g_3,g_4,g_5)$ is a 5-tuple of vector fields of class $C^{B-1,1}$, then for every $x\in\rr^m$, the set-valued iterated Lie bracket $B_{\text{set}}({\bf g})(x)$ is defined as follows:
\begin{align*}
&B_{\text{set}}({\bf g})(x)=[[g_1,g_2],g_3]_{\text{set}}(x)\colonequals\\
&\text{\bf co}\left\{v\in\rr^m\; : \;v=\lim_{k\to\infty}\Big(Dg_3(x+h_{1k})\cdot [g_1,g_2](x+h_{2k})-D[g_1,g_2](x+h_{2k})\cdot g_3(x+h_{1k})\Big)\right\},
\end{align*}
where the limit is taken along all sequences $\{(h_{1k},h_{2k})\}_{k\in\nn}$ converging to ${\bf 0}$ such that $\{(x+h_{1k},x+h_{2k})\}_{k\in\nn}\subset\text{\textsf{Diff}}^{1}(g_3)\times\text{\textsf{Diff}}^{1}([g_1,g_2])$.}
\item{Let $B\colonequals [[X_1,X_2],[X_3,X_4]]$ be a formal bracket and let the 6-tuple ${\bf g}=(g_1,g_2,g_3,g_4,g_5,g_6)$ of vector fields be of class $C^{B-1,1}$, then for every $x\in\rr^m$, the set-valued iterated Lie bracket $B_{\text{set}}({\bf g})(x)$ is defined as follows:
\begin{align*}
&B_{\text{set}}({\bf g})(x)=[[g_1,g_2],[g_3,g_4]]_{\text{set}}(x)\colonequals\\
&\text{\bf co}\left\{v\in\rr^m\; : \;v=\lim_{{k\to\infty}}\Big(D[g_3,g_4](x+h_{1k})\cdot [g_1,g_2](x+h_{2k})-D[g_1,g_2](x+h_{2k})\cdot[g_3,g_4](x+h_{1k})\Big)\right\},
\end{align*}
where the limit is taken along all sequences $\{(h_{1k},h_{2k})\}_{k\in\nn}$ converging to ${\bf 0}$ such that $\{(x+h_{1k},x+h_{2k})\}_{k\in\nn}\subset\text{\textsf{Diff}}^{1}([g_3,g_4])\times\text{\textsf{Diff}}^{1}([g_1,g_2])$.}
\item{If $B\colonequals [[[X_2,X_3],X_4],X_5]$ is a formal bracket and ${\bf g}=(g_1,g_2,g_3,g_4,g_5,g_6)$ is a 6-tuple of vector fields of class $C^{B-1,1}$, then for every $x\in\rr^m$, the set-valued iterated Lie bracket $B_{\text{set}}({\bf g})(x)$ is defined as follows:
\begin{align*}
&B_{\text{set}}({\bf g})(x)=[[[g_2,g_3],g_4],g_5]_{\text{set}}(x)\colonequals\\
&\text{\bf co}\left\{v\in\rr^m\; : \;v=\lim_{{k\to\infty}}\Big(Dg_5(x+h_{1k})\cdot\big(Dg_4(x+h_{2k})\cdot [g_2,g_3](x+h_{3k})-D[g_2,g_3](x+h_{3k})\cdot g_4(x+h_{2k})\big)\right.\\
&\left.\qquad-D\big(Dg_4(x+h_{2k})\cdot [g_2,g_3](x+h_{3k})-D[g_2,g_3](x+h_{3k})\cdot g_4(x+h_{2k})\big)\cdot g_5(x+h_{1k})\Big)\right\},
\end{align*}
where the limit is taken along all sequences $\{(h_{1k},h_{2k},h_{3k})\}_{k\in\nn}$ converging to ${\bf 0}$ such that $\{(x+h_{1k},x+h_{2k},x+h_{3k})\}_{k\in\nn}\subset\text{\textsf{Diff}}^{1}(g_5)\times\text{\textsf{Diff}}^{2}(g_4)\times\text{\textsf{Diff}}^{2}([g_2,g_3])$.}
\end{enumerate}
\end{examples}

In the following result, we collect some {elementary} properties {of} set-valued Lie brackets:
\begin{proposition}\label{prop-svb}
Let the notations be as in Definition~\ref{set-bf}. {Then the following statements hold}:
\begin{enumerate}[(i)]
\item{\emph{(Upper semi-continuity)} The set-valued {mapping} {$x\mapsto B_{\emph{set}}({\bf g})(x)$} is \emph{upper semi}-\emph{continuous}, and {takes non-empty compact and convex values};\footnote{{A set-valued map $\Phi\colon X\rightrightarrows Y$ between topological spaces $X$ and $Y$ is said to be upper semi-continuous at a point $x\in X$, if for every neighborhood $\mathcal{O}_2$ of the subset $\Phi(x)\subset Y$, there exists a neighborhood $\mathcal{O}_1$ of $x$ such that $\Phi(\mathcal{O}_1)\subset\mathcal{O}_2$.}}}
\item{\emph{(Consistency with the classical case)} If ${\bf g}$ is of class $C^B$ in a neighborhood $\mathcal{O}$ of a point $\bar x\in\rr^m$, then the (single-valued) mapping $\mathcal{O}\ni x\mapsto B({\bf g})(x)$ is continuous and
\begin{equation*}
B_{\emph{set}}({\bf g})(x)=\{B({\bf g})(x)\}~\text{for every}~x\in \mathcal{O};
\end{equation*}
\item\emph{(Anti-Symmetry)} If $B\colonequals [X_{\mu+1},X_{\mu+2}]$, ${\bf\hat g}=(g_{\mu+1},g_{\mu+2})$ and ${\bf\check g}=(g_{\mu+2},g_{\mu+1})$, then
\begin{equation*}
B_{\emph{set}}({\bf\hat g})(x)=[g_{\mu+1},g_{\mu+2}]_{\emph{set}}(x)=-[g_{\mu+2},g_{\mu+1}]_{\emph{set}}(x)=-B_{\emph{set}}({\bf\check g})(x)
~\text{for every}~x\in\mM;\footnote{For a given set $W\subset\rr^{n}$, we set $-W\colonequals\{-w\; : \;w\in W\}$. {Also, observe that for a vector field $g$ of class $C^{0,1}$ in a neighborhood $\mathcal{O}$ of a point $\bar x\in\rr^m$, one has that $[g,g]_{\text{set}}(x)=\{\bf 0\}$ for every $x\in\mathcal{O}$.}}
\end{equation*}}
\item{\emph{(Consistency with Clarke's generalized Jacobian)} If $(B_1,B_2)$ is the factorization of $B$ and ${\bf g}$ is of class $C^{B_1,1}\cap C^{B_2+1}$ in a neighborhood $\mathcal{O}$ of a point $\bar x\in\rr^m$, then for every $x\in \mathcal{O}$, one has that
\begin{align*}
B_{\emph{set}}({\bf g})(x)&=DB_2({\bf g})(x)\cdot B_1({\bf g})-D_CB_1({\bf g})(x)\cdot B_2({\bf g})(x)\\
&=\{v\; : \;v=DB_2({\bf g})(x)\cdot B_1({\bf g})-L\cdot B_2({\bf g})(x),~\emph{where}~L\in D_CB_1({\bf g})(x)\},
\end{align*}
with the notation {$D_CB_1({\bf g})(x)$} denoting the Clarke's generalized Jacobian {of $B_1({\bf g})$ at $x$}.\footnote{The \emph{Clarke's generalized Jacobian} $D_C\varphi(x)$ of a $C^{0,1}$ function $\varphi\colon\rr^n\to\rr^m$ at a point $x\in\rr^n$, is {defined as follows:}
\begin{equation*}
D_C\varphi(x)\colonequals{\text{\bf co}}\left\{A\; : \;A=\lim_{j\to\infty} D\varphi(x_j),~\text{where}~\{x_{j}\}_{j\in\nn}\subset\text{\textsf{Diff}}^{1}(\varphi)~\text{is a sequence of points in $\rr^n$, with}~x_j\to x~\text{as}~j\to\infty\right\}.
\end{equation*}}}
\end{enumerate}
\end{proposition}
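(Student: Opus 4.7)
The plan is to treat each of the four statements separately, exploiting the convex-hull-of-limits structure of Definition~\ref{set-bf} together with the hypothesis that $B({\bf g})$ is, by Definition~\ref{plug}, locally bounded and measurable (indeed it is built from locally Lipschitz ingredients differentiated almost everywhere).

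For (i), I would first argue non-emptiness: for each basic sub-bracket $S_i$ of $B$, the vector field $S_i({\bf g})$ is of class $C^{\dime(S_i;B)-1,1}$ by the inductive structure of Definition~\ref{plug}, so by Rademacher's theorem the set $\textsf{Diff}^{\dime(S_i;B)}(S_i({\bf g}))$ has full measure. Hence for every $x$ one can pick sequences $h_{ik}\to 0$ with $x+h_{ik}$ in each of these Diff-sets. Because the relevant derivatives of the $S_i({\bf g})$ are locally bounded (Lipschitz), the scalar values $B({\bf g}^{{\bf h}_k})(x)$ form a bounded sequence and admit a convergent subsequence, producing at least one limit point. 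Local boundedness in $x$ then gives that the closure of the limit set is compact, and passing to the convex hull preserves compactness in $\rr^m$; convexity is by construction. Upper semi-continuity follows from a standard diagonal argument: any sequence $x_n\to x$ with $v_n\in B_{\text{set}}({\bf g})(x_n)$ and $v_n\to v$ can be realized as a limit of values $B({\bf g}^{{\bf h}})$ evaluated at perturbations $x_n+h_{i}$ whose offsets can be chosen arbitrarily small, so by a further diagonal extraction $v$ lies in $B_{\text{set}}({\bf g})(x)$, which is the characterization of upper semi-continuity for compact-valued maps.

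For (ii), if ${\bf g}\in C^B$ on a neighborhood $\mathcal{O}$ of $\bar x$, then each $S_i({\bf g})$ is everywhere differentiable of the required order with continuous derivatives on $\mathcal{O}$, so the single-valued bracket $B({\bf g})$ is continuous on $\mathcal{O}$. For any admissible sequence ${\bf h}_k\to 0$, continuity forces $B({\bf g}^{{\bf h}_k})(x)\to B({\bf g})(x)$, whence the set of limits is a singleton and its convex hull is $\{B({\bf g})(x)\}$. Statement (iii) is immediate for length-2 brackets: at any common point of differentiability $[g_{\mu+1},g_{\mu+2}](y)=-[g_{\mu+2},g_{\mu+1}](y)$ by the classical definition, and applying $-W$ to the defining convex hull (and using that the Diff-sets are the same under the swap) gives the stated sign change; the side remark about $[g,g]_{\text{set}}$ then follows by choosing ${\bf\hat g}={\bf\check g}$.

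The most substantive part is (iv). Under ${\bf g}\in C^{B_1,1}\cap C^{B_2+1}$, the vector field $B_1({\bf g})$ is $C^{0,1}$ while $B_2({\bf g})$ is $C^1$ with continuous derivative. In the defining expression
\begin{equation*}
B({\bf g}^{{\bf h}_k})(x)=DB_2({\bf g})(x+h_{1k})\cdot B_1({\bf g})(x+h_{2k})-DB_1({\bf g})(x+h_{2k})\cdot B_2({\bf g})(x+h_{1k}),
\end{equation*}
the continuity of $DB_2({\bf g})$, $B_1({\bf g})$ and $B_2({\bf g})$ forces
\begin{equation*}
DB_2({\bf g})(x+h_{1k})\cdot B_1({\bf g})(x+h_{2k})\longrightarrow DB_2({\bf g})(x)\cdot B_1({\bf g})(x)
\end{equation*}
and $B_2({\bf g})(x+h_{1k})\to B_2({\bf g})(x)$, independently of the choice of sequences; only the factor $DB_1({\bf g})(x+h_{2k})$ carries genuine set-valued content, and its set of limits taken along $h_{2k}\to 0$ with $x+h_{2k}\in \textsf{Diff}^1(B_1({\bf g}))$ is precisely the defining set for $D_CB_1({\bf g})(x)$ before convex hull. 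Taking convex hull on both sides and using that the affine map $L\mapsto DB_2({\bf g})(x)\cdot B_1({\bf g})(x)-L\cdot B_2({\bf g})(x)$ commutes with $\mathbf{co}$ yields the claimed identity. The only real obstacle is making rigorous this separation of a deterministic factor from the set-valued factor; this is handled by noting that $h_{1k}$ and $h_{2k}$ vary independently in Definition~\ref{set-bf}, so one may freeze $h_{2k}$ and let $h_{1k}\to 0$ first (continuity removes $h_{1k}$ entirely), reducing the set of limit points to exactly $\{DB_2({\bf g})(x)B_1({\bf g})(x)-L\,B_2({\bf g})(x):L\in D_CB_1({\bf g})(x)\}$, which is already convex in $L$ after taking $\mathbf{co}$.
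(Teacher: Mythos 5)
Your treatment of items (i)--(iii) is essentially the standard one (the paper itself delegates (i) and (iii) to \cite{Feleqi2017} and \cite{Rampazzo2007commutators} and regards (ii) as straightforward): Rademacher plus local boundedness of the top-order derivatives gives non-empty compact convex values, a closed-graph/diagonal argument (with local boundedness, and Carath\'eodory to handle the convex combinations) gives upper semi-continuity, and continuity of all the relevant derivatives gives (ii). The one slip there concerns the footnote in (iii): anti-symmetry with ${\bf\hat g}={\bf\check g}$ only yields $[g,g]_{\text{set}}(x)=-[g,g]_{\text{set}}(x)$, i.e., a set symmetric about the origin, not the singleton $\{{\bf 0}\}$; the correct one-line justification is that $[g,g](y)={\bf 0}$ at every point of differentiability of $g$, so every limit entering the definition is ${\bf 0}$.

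Item (iv) contains a genuine gap. The formula you display,
\begin{equation*}
B({\bf g}^{\h_k})(x)=DB_2({\bf g})(x+h_{1k})\cdot B_1({\bf g})(x+h_{2k})-DB_1({\bf g})(x+h_{2k})\cdot B_2({\bf g})(x+h_{1k}),
\end{equation*}
is what Definition~\ref{set-bf} produces only when $B_1$ and $B_2$ are themselves basic sub-brackets of $B$, so that each receives a single shift. In general there are $\dime(B)=\dime(B_1)+\dime(B_2)$ independent shift vectors, the variables of $B_1$ are displaced by several different $h_i$'s, and the term that actually appears is $D\big(B_1({\bf g}^{\h_k})\big)(x)$, the derivative at $x$ of the multi-shifted field $y\mapsto B_1({\bf g}^{\h_k})(y)$, which is not of the form $DB_1({\bf g})(x+h)$ for any single $h$ (compare Example~\ref{examples}(iii)). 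The $B_2$-side is indeed harmless, since $C^{B_2+1}$ makes every derivative entering $B_2({\bf g}^{\h})$ and its differential continuous; but on the $B_1$-side your key identification --- that the relevant limit set is exactly the Clarke-defining set of $B_1({\bf g})$ --- is justified only when $\textsf{\LL}(B_1)\leq 2$. For $\textsf{\LL}(B_1)\geq 3$ the hypothesis ${\bf g}\in C^{B_1,1}$ stabilizes only the lower-order derivatives inside $B_1$: the top-order derivatives of its basic sub-brackets are merely locally bounded, and limits of $D\big(B_1({\bf g}^{\h})\big)(x)$ along independent shifts need not be realizable as joint limits along one common sequence, which is what $D_CB_1({\bf g})(x)$ records (taking all shifts equal gives only the inclusion of the Clarke-based set into $B_{\text{set}}({\bf g})(x)$). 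This is not a removable technicality: on $\rr$, take $B=[[[X_1,X_2],X_3],X_4]$, $g_1\equiv 1$, $g_2(s)=s+\int_0^s\tfrac{t|t|}{2}\,dt$, $g_3\colonequals[g_1,g_2]=1+\tfrac{s|s|}{2}$, $g_4\equiv 1$, which is of class $C^{B_1,1}\cap C^{B_2+1}$; then $B_1({\bf g})=[g_3,g_3]\equiv 0$, so the right-hand side of (iv) is $\{0\}$, whereas letting the two shifts inside $B_1$ tend to $0$ with opposite signs produces the limit values $\pm 2$, which therefore lie in $B_{\text{set}}({\bf g})(0)$. Hence the ``freeze one shift, pass to the limit in the other'' reduction proves (iv) only in the case where $B_1$ is a basic sub-bracket (the case actually used elsewhere in the paper), and in the general case the asserted identity itself needs to be re-examined or suitably restricted rather than obtained by your separation argument.
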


In Proposition~\ref{prop-svb}, the proofs for items $(ii)$ and $(iv)$ are fairly straightforward, while we refer the reader to \cite[Proposition~3.2]{Feleqi2017} and \cite[Proposition~3.3]{Rampazzo2007commutators} for the proofs of items $(i)$ and $(iii)$, respectively.

\begin{remark} As for a possible relation with Clarke's generalized Jacobian, let us 
notice that, if the $r$-tuple of vector fields ${\bf g}$ were of class $C^{B_1,1}\cap C^{B_2,1}$ in a neighborhood $\mathcal{O}$ of a point {$\bar x\in\rr^m$}, then one might be tempted to define (in analogy with the formula for the Lie bracket in the classical case) a different set-valued Lie bracket $\hat B_{\text{set}}({\bf g})$, by setting
\begin{equation*}
\hat B_{\text{set}}({\bf g})(x)\colonequals\{v\; : \;v=L_2\cdot B_1({\bf g})-L_1\cdot B_2({\bf g})(x),~\text{where}~(L_1,L_2)\in D_CB_1({\bf g})(x)\times D_CB_2({\bf g})(x)\}
\end{equation*}
for every $x\in\mathcal{O}$. However, such a definition of a set-valued Lie bracket would not be a good choice, since, for example, it would result in a Lie bracket that is ``too large'' for the associated ``commutativity property'' to hold (see \cite{Rampazzo2007commutators} for more details).
\end{remark}

\section{Generalized Differential Quotients and an Estimate for Obtaining an Asymptotic Formula}
\subsection{Generalized Differential Quotients}\label{s-GDQ}
The theory of GDQs is a type of set-valued generalization of the theory of classical differentials, which has been created primarily to tackle the case of set-valued maps in quite a relaxed general setting. This theory has been initiated and further developed by H. J. Sussmann\footnote{Sussmann has stated to have been inspired by the following characterization of classic differentiability from a paper of Botsko and Gosser (see \cite{Botsko1985}): a function $F\colon U \to \rr^m$, where $U$ is an open set in $\rr^n$, is differentiable at some point $x_* \in U$ if and only if there exists a map $G \colon U \to \textsf{Lin}(\rr^n,\rr^m)$, which is continuous at $x_*$ and such that $F(x) = F(x_*) + G(x)(x-x_*)$ for every $x \in U$. Here $\textsf{Lin}(\rr^n,\rr^m)$ stands for the space of linear maps from $\rr^n$ to $\rr^m$. 
}, and we refer the interested reader to \cite{Sussmann2000new,Sussmann2000resultats,Sussmann2008} for the proofs of the results given in this section, with the exception of the proofs of Theorem~\ref{chain}, Theorem~\ref{open-t}, and Theorem~\ref{sucole}, which are given in this section, Appendix~\ref{a-OMT} and Appendix~\ref{a-GDQ}, respectively. We show that the set-valued Lie brackets are GDQs of suitable compositions of the flows generated by the vector fields $\{f_1,\hdots,f_\nu\}$ in \eqref{sistemaintro1}, where some of these might be merely continuous, so that their corresponding flows can possibly be multi-valued. The chain rule (see Theorem~\ref{chain}) and the open mapping theorem (see Theorem~\ref{open-t}) valid within the framework of the theory of GDQs, play a crucial role in the proof of the main results, namely Theorem~\ref{ChowRas} and Theorem~\ref{wChowRas}. Let us begin with a few definitions:
\begin{definition}\label{gph-def}
Given a set-valued map $F\colon X\rightrightarrows Y$ (with possibly empty values) from a metric space $X$ into a metric space $Y$, the set
\begin{equation*}
\emph{\textsf{Gr}}(F)\colonequals\{(x,y)\in X\times Y\; : \;y\in F(x)\},
\end{equation*}
is called the \emph{graph} of $F$.
\end{definition}

\begin{definition}\label{igc-def}
Let $X$ and $Y$ be metric spaces. Assuming that the graph $\emph{\textsf{Gr}}(F)$ is compact, we say that a sequence {$\{F_j\}_{j\in\nn}$} of set-valued {maps} {with compact graphs} \emph{inward graph converges} to $F$, if for any {non-empty open set $\O\subset X\times Y$} such that {$\emph{\textsf{Gr}}(F)\subset \O$,} there exists {a} $j_{\O}\in\nn$ such that {$\emph{\textsf{Gr}}(F_{j})\subset \O$} {for every $j\ge j_{\O}$}.
\end{definition}

\begin{definition}\label{reg-def}
{Let $X$ and $Y$ be metric spaces}. We say that a set-valued map $F\colon X\rightrightarrows Y$ is \emph{Cellina continuously approximable (CCA)}, if for every {non-empty compact set $K\subset X$}{,} the restriction $F|_K$ {of $F$ to $K$}, has {a} compact graph and {it} is a limit (in the sense of Definition \ref{igc-def}) of a sequence of continuous single-valued maps from $K$ to $Y$.
\end{definition}

\begin{remark}
Definition~\ref{reg-def} is a generalization of the concept of continuity. Indeed, it is not too hard to verify that if $F\colon X\to Y$ is a single-valued map from a metric space $X$ to a metric space $Y$, then $F$ is {CCA} if and only if $F$ is continuous.
\end{remark}

The following result (see \cite[Theorem~3.6]{Sussmann2008}) shows that composition preserves {CCA set-valued} maps:
\begin{theorem}\label{CCA-comp}
If {the set-valued maps} $F\colon X\rightrightarrows Y$ and $G\colon Y\rightrightarrows Z$ are {CCA}, then the composite map $G\circ F\colon X\rightrightarrows Z$ is {CCA}.
\end{theorem}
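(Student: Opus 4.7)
The plan is to verify the two defining conditions of CCA for $G\circ F$ on each non-empty compact set $K\subset X$.

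\emph{Compactness of the composite graph.} The image $F(K)=\pi_Y(\textsf{Gr}(F|_K))$ is compact as the continuous projection of a compact set, and then CCA of $G$ gives that $\textsf{Gr}(G|_{F(K)})$ is compact. The fibered product
\[
\Delta:=(\textsf{Gr}(F|_K)\times Z)\cap\bigl(K\times \textsf{Gr}(G|_{F(K)})\bigr)
\]
is a closed subset of the compact set $\textsf{Gr}(F|_K)\times \pi_Z(\textsf{Gr}(G|_{F(K)}))$, hence itself compact; and $\textsf{Gr}((G\circ F)|_K)$ is its projection onto $X\times Z$, which is therefore compact.

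\emph{Construction of the approximants.} Use CCA of $F$ on $K$ to pick continuous single-valued maps $F_j\colon K\to Y$ such that $F_j\to F|_K$ in the sense of inward graph convergence. Fix a compact neighborhood $L$ of $F(K)$ in $Y$—the ambient spaces of interest here are $\mathbb{R}^n$ or smooth manifolds, hence locally compact. Inward graph convergence then forces $F_j(K)\subset L$ for all $j$ sufficiently large, so we may assume this for every $j$ after discarding finitely many terms. Apply CCA of $G$ to the compact set $L$ to obtain continuous single-valued maps $G_k\colon L\to Z$ with $G_k\to G|_L$ in inward graph convergence. The continuous compositions $H_{j,k}:=G_k\circ F_j\colon K\to Z$ are then the natural candidates; a diagonal subsequence $H_n:=H_{j_n,k_n}$ will be extracted as the inward-graph limit of $(G\circ F)|_K$.

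\emph{Verification via a factorization lemma.} The crux of the argument—and its main technical obstacle—is the following factorization statement: for every open $\Omega\subset X\times Z$ containing $\textsf{Gr}((G\circ F)|_K)$, there exist open sets $U\subset K\times L$ and $W\subset L\times Z$ with $\textsf{Gr}(F|_K)\subset U$, $\textsf{Gr}(G|_L)\subset W$, and
\[
\bigl\{(x,z)\in X\times Z:\exists\,y\in L\text{ with }(x,y)\in U\text{ and }(y,z)\in W\bigr\}\subset \Omega.
\]
Granted this, inward graph convergence of $F_j$ and of $G_k$ yields thresholds $j_0,k_0$ beyond which $\textsf{Gr}(F_j)\subset U$ and $\textsf{Gr}(G_k)\subset W$, whence $\textsf{Gr}(H_{j,k})\subset \Omega$ for $j\ge j_0,\ k\ge k_0$, and the diagonal extraction closes the argument. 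To produce $U$ and $W$, cover the compact set $\Delta$ by finitely many open boxes $A_i\times B_i\times C_i\subset K\times L\times Z$ for which $A_i\times C_i\subset \Omega$ (such a box can be found at each point of $\Delta$ because $\Omega$ is open in $X\times Z$), and set $U:=\bigcup_i A_i\times B_i$ and $W:=\bigcup_i B_i\times C_i$. The stronger requirement $\textsf{Gr}(G|_L)\subset W$, rather than merely $\textsf{Gr}(G|_{F(K)})\subset W$, is arranged by pre-shrinking $L$ to a sufficiently tight compact neighborhood of $F(K)$, invoking the upper semi-continuity of $G$ (itself a consequence of compactness of its graphs on compact sets together with CCA). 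Balancing this coverage requirement against the relational composition constraint is where the real work of the proof lies.
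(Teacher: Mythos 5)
Your overall architecture is reasonable (and note the paper itself does not prove this statement; it cites Sussmann's Theorem~3.6, so your argument has to stand on its own): the compactness of $\textsf{Gr}((G\circ F)|_K)$ is argued correctly, and reducing inward graph convergence of $G_k\circ F_j$ to a ``factorization lemma'' is the right idea. The genuine gap is that this lemma is precisely what you do not prove, and the box construction you sketch for it does not work as stated. With $U:=\bigcup_i A_i\times B_i$ and $W:=\bigcup_i B_i\times C_i$, the relational composition also contains all mixed products $A_i\times C_j$ for which $B_i\cap B_j\neq\emptyset$, and nothing in your choice of boxes forces $A_i\times C_j\subset\Omega$; these mixed pairs are exactly the problematic ones. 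Moreover, boxes chosen to cover $\Delta$ cover only pairs $(y,z)$ with $y\in F(K)$, so the requirement $\textsf{Gr}(G|_L)\subset W$ fails at points $y\in L\setminus F(K)$, and the proposed repair (``pre-shrink $L$ using upper semi-continuity of $G$'') cannot be made to work uniformly: compactness of graphs on compacta gives only the one-sided, pointwise upper semi-continuity $G(y)\subset\{z:\operatorname{dist}(z,G(y'))<\eta\}$ for $y$ near a \emph{fixed} $y'$, whereas patching the mixed boxes would require a uniform Hausdorff-type estimate that CCA maps simply do not satisfy. Saying that balancing these constraints ``is where the real work of the proof lies'' concedes that the essential step is missing.

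The lemma itself is true, and a short argument can replace the covering: argue by contradiction, taking $U_n$ and $W_n$ to be the open $1/n$-neighborhoods of $\textsf{Gr}(F|_K)$ and $\textsf{Gr}(G|_L)$. A bad triple $(x_n,y_n,z_n)$ with $x_n\in K$, $y_n\in L$, $(x_n,y_n)\in U_n$, $(y_n,z_n)\in W_n$, $(x_n,z_n)\notin\Omega$ has, after passing to subsequences (compactness of $K$, $\textsf{Gr}(F|_K)$ and $\textsf{Gr}(G|_L)$), limits $(x_n,y_n)\to(x_0,y_0)\in\textsf{Gr}(F|_K)$ and $(y_n,z_n)\to(y_0,z_0)\in\textsf{Gr}(G|_L)$ with the \emph{same} $y_0$; hence $(x_0,z_0)\in\textsf{Gr}((G\circ F)|_K)\subset\Omega$, contradicting $(x_n,z_n)\notin\Omega$ since $\Omega$ is open. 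A second, smaller defect: you invoke local compactness of $Y$ to produce a compact neighborhood $L$ of $F(K)$, but the theorem is stated for arbitrary metric spaces. This is avoidable: take $L:=F(K)\cup\bigcup_j F_j(K)$, which is compact because $\textsf{Gr}(F_j)$ eventually lies in every $\varepsilon$-neighborhood of the compact set $\textsf{Gr}(F|_K)$, and this $L$ automatically contains every $F_j(K)$, so no neighborhood (and no discarding of terms) is needed. With these two repairs your scheme, including the diagonal extraction, goes through.
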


{Before proceeding further, let us fix another notation. For real linear spaces $X$ and $Y$, let us use the notation $\textsf{Lin}(X,Y)$ to denote the set of all linear maps from $X$ to $Y$. A \emph{linear multi}-\emph{map} from $X$ to $Y$, is a {subset} of $\textsf{Lin}(X,Y)$.}
\begin{definition}[Generalized differential quotient]\label{GDQ-def}
Let ${F}\colon \rr^n\rightrightarrows \rr^m$ {be a set-valued map} and let {$\L\subset\emph{\textsf{Lin}}(\rr^n,\rr^m)$} be a {non-empty} compact set. {{Moreover}, let} {$S\subset\rr^n$ be a non-empty set}. We say that $\L$ is a \emph{GDQ of $F$ at $(x_*,y_*)\in\rr^n\times\rr^m$}, \emph{with $y_*\in F(x_*)$}, \emph{in the direction of $S$}, if for every real number {$\d>0$,} there exist $U$ {and} $G$ such that:
\begin{enumerate}[(i)]
\item{$U$ is a {compact} neighborhood of {${\bf 0}$} in $\rr^n$ and $(x_*+U)\cap S$ is compact;}
\item{$G$ is a {CCA} set-valued map from $(x_*+U)\cap S$ to the {$\d$-neighborhood} {$\L^{\d}$} of $\L$;\footnote{Here, the set $\L^{\d}\colonequals\{A\; : \;\inf_{L\in \L}|A-L|\leq\d\}$, where $|\cdot|$ denotes the operator norm.}}
\item{{$y_*+G(x)(x-x_*)\subset F(x)$} for every {$x\in (x_*+U)\cap S$}.\footnote{Here, the set $y_*+G(x)(x-x_*)\colonequals\{y\; : \;y=y_*+L(x-x_*),~\text{where}~L\in G(x)\}$.}}
\end{enumerate}
\end{definition}

\begin{remark}
{An extension of Definition~\ref{GDQ-def} to the setting of finite-dimensional manifolds is also possible. More precisely, {let $\mM$ and $\mN$ be finite-dimensional manifolds} {of class $C^{1}$}. {Moreover}, let $F\colon\mM\rightrightarrows\mN$ be a set-valued map and let {$S\subset\mM$} be a non-empty set. Let $x_*\in\mM$ and $y_*\in F(x_*)$, and let us choose coordinate charts $(U,\phi)$ and $(V,\psi)$, centered at $x_*$ and $y_*$, respectively. We now declare a set $\L\subset\text{\textsf{Lin}}(T_{x_*}\mM,T_{y_*}\mN)$ to be a GDQ of $F$ at $(x_*,y_*)$ in the direction of $S$, if {$D\psi(y_*)\circ\L\circ{D\phi(x_*)}^{-1}$} is a GDQ of $\psi\circ F\circ\phi^{-1}$ at $({\bf 0},{\bf 0})$ in the direction of $\phi(U\cap S)$. It turns out that this definition is intrinsic, i.e., it does not depend on the choice of the coordinate charts $(U,\phi)$ and $(V,\psi)$.}
\end{remark}

As one might expect, the notions of the classical differential and also Clarke's generalized Jacobian are particular instances of the notion of a GDQ. More precisely, the following two lemmas hold:
\begin{lemma} Let $\mM$ and $\mN$ be finite-dimensional manifolds {of class $C^{1}$} and let $F\colon\mM\to\mN$ be a map, {classically differentiable} at a point $x\in\mM$. Then, the singleton $\{DF(x)\}\subset\emph{\textsf{Lin}}(T_x\mM,T_{F(x)}\mN)$, is a GDQ of $F$ at $(x,F(x))$ in the direction of $\mM$.
\end{lemma}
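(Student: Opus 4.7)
My plan is to reduce to the Euclidean case via charts and then exhibit an explicit continuous single-valued map $G$ realizing the conditions in Definition~\ref{GDQ-def} with $\L=\{DF(x)\}$. Because the GDQ notion is chart-intrinsic (by the remark following Definition~\ref{GDQ-def}), I can work in coordinate charts centered at $x$ and $F(x)$ and, without loss of generality, reduce to the Euclidean statement: for an open neighborhood $\mM\subset\rr^n$ of $\mathbf{0}$ and a map $F\colon\mM\to\rr^m$ with $F(\mathbf{0})=\mathbf{0}$ and Fréchet derivative $A\colonequals DF(\mathbf{0})\in\textsf{Lin}(\rr^n,\rr^m)$, the singleton $\{A\}$ is a GDQ of $F$ at $(\mathbf{0},\mathbf{0})$ in the direction of $\mM$.

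Given $\d>0$, I would first use Fréchet differentiability of $F$ at $\mathbf{0}$ to pick a compact neighborhood $U$ of $\mathbf{0}$, contained in $\mM$, on which the estimate $|F(y)-Ay|\le\d|y|$ holds. The natural candidate is
\[
G(y)\colonequals A+\frac{(F(y)-Ay)\otimes y}{|y|^2}\quad\text{for }y\in U\setminus\{\mathbf{0}\},\qquad G(\mathbf{0})\colonequals A,
\]
where $u\otimes v$ denotes the rank-one operator $w\mapsto\langle w,v\rangle u$. Two direct checks dispose of the quantitative parts of Definition~\ref{GDQ-def}: $G(y)\,y=Ay+(F(y)-Ay)=F(y)$, so the inclusion $F(\mathbf{0})+G(y)(y-\mathbf{0})\subset F(y)$ required by item $(iii)$ is satisfied; and $|G(y)-A|\le|F(y)-Ay|/|y|\le\d$, so $G$ takes values in the $\d$-neighborhood $\L^\d$.

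The step I expect to be the most delicate is the verification that $G$ is CCA. Since $G$ is single-valued, the remark following Definition~\ref{reg-def} reduces this to plain continuity of $G$ on $U$. Continuity at $\mathbf{0}$ is immediate from $|G(y)-A|\le|F(y)-Ay|/|y|$ combined with the $o(|y|)$-decay of $F(y)-Ay$; continuity on $U\setminus\{\mathbf{0}\}$ follows from continuity of $F$ in a neighborhood of $\mathbf{0}$, which is implicit in the manifold setup and is automatic in every application of the lemma elsewhere in the paper.

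The conceptual obstacle is precisely this continuity of $F$ off $\{x\}$: classical differentiability at a single point does not in itself guarantee it. To cover the fully general setting, I would replace the single-valued $G$ above by the set-valued map $y\mapsto\{L\in\L^\d\colon Ly=F(y)\}$, which is non-empty (by the $\d$-estimate), compact, and convex-valued; Cellina's approximation theorem then yields CCA once this map is shown to be upper semi-continuous, a property that again reduces to continuity of $F$ in a neighborhood of $\mathbf{0}$. Under this mild continuity hypothesis, the argument verifies all three conditions of Definition~\ref{GDQ-def}, establishing that $\{DF(x)\}$ is a GDQ of $F$ at $(x,F(x))$ in the direction of $\mM$.
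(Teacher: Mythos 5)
Your proof is correct. The paper itself does not prove this lemma (it is among the GDQ facts for which the reader is referred to Sussmann's papers), so there is no in-text argument to compare against; but your rank-one correction $G(y)=A+|y|^{-2}\,(F(y)-Ay)\otimes y$ is exactly the device the paper uses in Appendix~\ref{a-GDQ} for Theorem~\ref{sucole}, where the map $L_{2j}$ defined by $L_{2j}z=\langle x,z\rangle|x|^{-2}(y_j-L_{1j}x)$ plays the same role, and your verification of the three conditions of Definition~\ref{GDQ-def} is complete. The continuity caveat you raise is genuine and can be made sharper: for a single-valued $F$ (normalized so that $(x,F(x))=(\mathbf{0},\mathbf{0})$), the GDQ property in the direction of all of $\mM$ actually \emph{forces} continuity of $F$ near $x$. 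Indeed, condition $(iii)$ gives $Ly=F(y)$ for every $L\in G(y)$, and if $G$ is CCA and nonempty-valued on a compact neighborhood $K$, then compactness of $\textsf{Gr}(G|_K)$ shows that for any $y_j\to y$ in $K$ and any choice $L_j\in G(y_j)$, some subsequence $(y_{j_k},L_{j_k})$ converges to a point $(y,L)$ of the graph, whence $F(y_{j_k})=L_{j_k}y_{j_k}\to Ly=F(y)$; applying this to every subsequence yields $F(y_j)\to F(y)$, i.e., $F|_K$ is continuous. So continuity of $F$ in a neighborhood of $x$ is an unavoidable implicit standing hypothesis of the lemma (automatically satisfied by the flow and multi-flow maps to which it is applied in the paper), not a defect of your argument; under that hypothesis your single-valued $G$ already suffices, and the set-valued fallback you sketch at the end is unnecessary.
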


\begin{lemma}
Let $\mM$ and $\mN$ be finite-dimensional manifolds {of class $C^{1}$} and let $F\colon\mM\to\mN$ be a Lipschitz continuous map in a neighborhood of a point $x\in\mM$. Then, {the Clarke's generalized Jacobian $D_CF(x)\subset\emph{\textsf{Lin}}(T_x\mM,T_{F(x)}\mN)$}, is a GDQ of $F$ at $(x,F(x))$ in the direction of $\mM$.
\end{lemma}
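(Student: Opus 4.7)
The plan is to reduce to Euclidean space via charts and then exhibit an explicit set-valued map built out of $D_{C}F$ along line segments. Because the GDQ notion on manifolds is chart-independent, I can fix $C^{1}$ coordinate charts centered at $x$ and $F(x)$ so that the problem becomes: $F\colon V\to\rr^{m}$ is Lipschitz on an open neighborhood $V$ of ${\bf 0}\in\rr^{n}$, $F({\bf 0})={\bf 0}$, and one must show that $D_{C}F({\bf 0})$ is a GDQ of $F$ at $({\bf 0},{\bf 0})$ in the direction of $\rr^{n}$. The key analytic input is Lebourg's vector-valued mean value theorem for Lipschitz maps: for every $x$ near ${\bf 0}$,
\[
F(x) \in \textbf{co}\bigl\{L\cdot x \; : \; L\in D_{C}F(z),~ z\in [{\bf 0},x]\bigr\}.
\]
(The vector case follows from the scalar Lebourg theorem applied to each linear functional on $\rr^{m}$ together with a separating hyperplane argument.)

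Given $\delta>0$, I would use the upper semi-continuity of $D_{C}F$ at ${\bf 0}$ and the compactness/convexity of $D_{C}F({\bf 0})$ to select a compact convex neighborhood $U\subset V$ of ${\bf 0}$ such that $D_{C}F(z)\subset\bigl(D_{C}F({\bf 0})\bigr)^{\delta}$ for every $z\in U$. Then I would take as candidate
\[
G(x) \colonequals \textbf{co}\bigl\{L \; : \; L\in D_{C}F(z),\,z\in [{\bf 0},x]\bigr\}, \qquad x\in U.
\]
Since $\bigl(D_{C}F({\bf 0})\bigr)^{\delta}$ is convex (being the $\delta$-neighborhood of a convex set), it contains $G(x)$; and since $L\mapsto L\cdot x$ is linear, Lebourg's inclusion rewrites as $F(x)\in G(x)\cdot x$, which is precisely condition $(iii)$ of Definition~\ref{GDQ-def} (with $y_{*}={\bf 0}$).

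It then remains to verify that $G$ is CCA on $U$. First I would check that $G$ is upper semi-continuous with non-empty compact convex values: the set $\bigcup_{z\in U}D_{C}F(z)$ is bounded by the Lipschitz constant of $F$, so $G(x)$ sits in a fixed compact ball of the finite-dimensional space $\textsf{Lin}(\rr^{n},\rr^{m})$; upper semi-continuity of $G$ at $x$ follows by taking a converging sequence $L_{k}\to L$ with $L_{k}\in G(x_{k})$, applying Carath\'eodory's theorem to write each $L_{k}$ as a convex combination of at most $nm+1$ generators $L_{k,i}\in D_{C}F(z_{k,i})$ with $z_{k,i}\in[{\bf 0},x_{k}]$, and extracting subsequences so that the coefficients, points, and generators converge, with the limiting generators landing in $D_{C}F(z_{i})$ by upper semi-continuity of $D_{C}F$. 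Once upper semi-continuity and compact convex values are in hand, Cellina's approximation theorem produces a sequence of continuous single-valued selections whose graphs inward-graph-converge to $\textsf{Gr}(G|_{K})$ on any compact $K\subset U$, which is exactly the CCA property.

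The main obstacle I expect is the upper semi-continuity of $G$, because the segment $[{\bf 0},x]$ itself varies with $x$ and a point $z_{k}\in[{\bf 0},x_{k}]$ can shift along the whole segment as $x_{k}\to x$; it is compactness of $[{\bf 0},x]$ combined with the Carath\'eodory bound on the number of summands in a convex combination that make the diagonal extraction close up in the limit. Everything else—Lebourg's formula, the inclusion into $\bigl(D_{C}F({\bf 0})\bigr)^{\delta}$, and the invocation of Cellina's theorem—is then either a direct application or a routine consequence of standard results in non-smooth and set-valued analysis.
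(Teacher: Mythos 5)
Your reduction to coordinates, the use of Lebourg's vector-valued mean value theorem, and the Cellina-type approximation for the CCA property are all reasonable ingredients, but the candidate map $G$ you propose does not satisfy condition $(iii)$ of Definition~\ref{GDQ-def}, and this is a genuine gap rather than a technicality. Condition $(iii)$ requires $y_*+G(x)(x-x_*)\subset F(x)$; since here $F$ is single-valued, this forces \emph{every} $L\in G(x)$ to satisfy $L(x-x_*)=F(x)-F(x_*)$ exactly. Lebourg's theorem only gives you the reverse inclusion: it produces \emph{some} $L$ in your convex hull with $Lx=F(x)$, i.e.\ $F(x)\in G(x)\cdot x$, not $G(x)\cdot x\subset\{F(x)\}$. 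A one-dimensional example already kills the construction: for $F(x)=|x|$ at $x_*=0$ one has $D_CF(0)=[-1,1]$, your $G(x)$ contains $[-1,1]$ for every $x>0$, and $G(x)\cdot x=[-x,x]\not\subset\{x\}$. So the map you feed into the GDQ definition is too large.

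The repair is to keep only the maps that reproduce the increment exactly, $G(x)\colonequals\{L\in(D_CF({\bf 0}))^{\delta}\;:\;Lx=F(x)\}$, and then the real work shifts to showing that this solution-set map is non-empty (Lebourg plus a rank-one correction of the form $z\mapsto\frac{\langle x,z\rangle}{|x|^{2}}(F(x)-L_1x)$, which stays within the $\delta$-neighborhood because of upper semi-continuity of $D_CF$) and that it is CCA. This is exactly the construction carried out in the paper's proof of Theorem~\ref{sucole} in Appendix~\ref{a-GDQ}; in fact the most economical route, consistent with the paper's toolbox, is to note that Lebourg's theorem together with upper semi-continuity of the Clarke Jacobian yields
\begin{equation*}
\inf\left\{|F(x)-F(x_*)-L(x-x_*)|\;:\;L\in D_CF(x_*)\right\}\leq |x-x_*|\,\tilde{\o}(|x-x_*|)
\end{equation*}
for some modulus $\tilde{\o}$, and then invoke Corollary~\ref{GDQ-c}, which already packages the non-emptiness and CCA arguments you would otherwise have to redo by hand. (The paper itself does not prove this lemma, attributing it to Sussmann's work, but Corollary~\ref{GDQ-c} is the intended mechanism within the paper.) Your upper semi-continuity and Carath\'eodory arguments for the convex-hull map are fine in themselves, but they are aimed at the wrong set-valued map.
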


\begin{remark}
The notion of a GDQ is in fact a strict generalization of both the notions of the classical differential and Clarke's generalized Jacobian. For instance (see \cite{Sussmann2002}), consider the following function:
\begin{equation*}
g(x)\colonequals\begin{cases}
x\sin\left(\dfrac{1}{x}\right), &\text{if $x\in\rr\setminus\{0\}$,}\\
0, &\text{if $x=0$}.
\end{cases}
\end{equation*}
It is not too hard to verify that the set $[-1,1]\subset\rr$ is a GDQ of the function $g$ at $(0,0)$ in the direction of $\rr$, whereas both the classical differential and Clarke's generalized Jacobian of $g$ at the point $0\in\rr$ do not exist.
\end{remark}

The following result establishes the chain rule property of GDQs:
\begin{theorem}[Chain rule]\label{chain}
Let $\mM_1,\mM_2$ and $\mM_3$ be finite-dimensional manifolds of class $C^1$, $x_1\in\mM_1$, $x_2\in\mM_2$ and $x_3\in\mM_3$, $F_1\colon\mM_1\rightrightarrows\mM_2$ and $F_2\colon\mM_2\rightrightarrows\mM_3$ be set-valued maps such that $x_2\in F_1(x_1)$ and $x_3\in F_2(x_2)$, and let $S_1\subset\mM_1$ and $S_2\subset \mM_2$ be non-empty sets such that $F(S_1) \subset S_2$. Assume that $\Lambda_1\subset\emph{\textsf{Lin}}(T_{x_1}\mM_1, T_{x_2}\mM_2)$ is a GDQ of $F_1$ at $(x_1,x_2)$ in the direction of $S_1$ and $\Lambda_2\subset\emph{\textsf{Lin}}(T_{x_2}\mM_2, T_{x_3}\mM_3)$ is a GDQ of $F_2$ at $(x_2, x_3)$ in the direction of $S_2$. 
If {$U\cap S_2$} is a \emph{retract}\footnote{If $X$ is a topological space and if $W$ is a subspace of $X$, then a continuous map $\textsf{r}\colon X\to W$ is called a retraction, if the restriction $\textsf{r}|_W$ of $\textsf{r}$ to $W$, is the identity map on $W$, i.e., $\textsf{r}(w)=w$ for all $w\in W$. In this case, $W$ is called a retract of X.} of $U$ for some compact neighborhood $U$ of $x_2$ in $\mM_2$ or if $F_1$ is {a single-valued map}, then the set $\L_2\circ\L_1\colonequals\{L_2\circ L_1\; :\; (L_1,L_2)\in\L_1\times\L_2\}$ is a GDQ of $F_2\circ F_1$ at $(x_1,x_3)$ in the direction of $S_1$.
\end{theorem}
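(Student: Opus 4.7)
My plan is to work in local coordinates (justified by the chart-independence of the GDQ notion) and, after translating, assume $x_i = 0$ for $i = 1,2,3$. Given $\d > 0$, I first use compactness of $\L_1, \L_2$ together with continuity of composition to choose $\d_1, \d_2 > 0$ with $\L_2^{\d_2} \circ \L_1^{\d_1} \subset (\L_2 \circ \L_1)^{\d}$. Applying the GDQ hypotheses for $F_1$ and $F_2$ at these respective parameters yields compact neighborhoods $U_1, U_2$ of the origin and CCA maps $G_1 \colon U_1 \cap S_1 \rightrightarrows \L_1^{\d_1}$ and $G_2 \colon U_2 \cap S_2 \rightrightarrows \L_2^{\d_2}$ satisfying $G_1(x)(x) \subset F_1(x)$ and $G_2(y)(y) \subset F_2(y)$. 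Shrinking $U_1$ if necessary (using boundedness of $\L_1^{\d_1}$), I may also assume $L_1(x) \in U_2$ for every $x \in U_1$ and $L_1 \in \L_1^{\d_1}$.

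The central observation is that whenever $x \in U_1 \cap S_1$ and $L_1 \in G_1(x)$, the point $y \colonequals L_1(x)$ lies in $F_1(x) \subset F_1(S_1) \subset S_2$ by hypothesis, and also in $U_2$; hence $G_2(y)$ is well-defined, and I may set
\[
G(x) \colonequals \bigl\{ L_2 \circ L_1 \colon L_1 \in G_1(x),\ L_2 \in G_2(L_1(x)) \bigr\}.
\]
By construction $G(x) \subset (\L_2 \circ \L_1)^{\d}$, and for any $L \colonequals L_2 \circ L_1 \in G(x)$ one has $L(x) = L_2(y) \in F_2(y) \subset F_2(F_1(x))$, which is precisely the inclusion required by Definition~\ref{GDQ-def}.

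The main obstacle is to show that $G$ is CCA, and this is where the two alternative hypotheses enter the picture. My strategy is to exhibit $G$ as a composition of CCA maps and invoke Theorem~\ref{CCA-comp}. In the retract case, let $r \colon U \to U \cap S_2$ be the retraction (with $U \subset U_2$ a compact neighborhood of the origin, obtained after further shrinking if necessary). I introduce the auxiliary set-valued map $\Phi \colon U_1 \cap S_1 \rightrightarrows \L_1^{\d_1} \times (U \cap S_2)$ defined by
\[
\Phi(x) \colonequals \bigl\{ (L_1, r(L_1(x))) \colon L_1 \in G_1(x) \bigr\},
\]
and I build continuous approximations $\phi^k(x) \colonequals (g_1^k(x), r(g_1^k(x)(x)))$ from continuous approximations $g_1^k$ of $G_1$. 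The retraction $r$ is what ensures that the second coordinate of $\phi^k$ lies in $S_2$; moreover the key identity $r(L_1(x)) = L_1(x)$, valid for $L_1 \in G_1(x)$ (since then $L_1(x) \in F_1(x) \subset S_2$), permits the transfer of inward graph convergence from $g_1^k \to G_1$ to $\phi^k \to \Phi$. Composing $\Phi$ with the CCA map $(L_1, y) \rightrightarrows \{L_2 \circ L_1 \colon L_2 \in G_2(y)\}$ (itself the composition of $\mathrm{Id} \times G_2$ with the continuous multiplication map) yields $G$, which is therefore CCA by Theorem~\ref{CCA-comp}.

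In the single-valued case, the identity $\{F_1(x)\} = G_1(x)(x)$ combined with Theorem~\ref{CCA-comp} shows that $F_1|_{U_1 \cap S_1}$ is a single-valued CCA map, and hence continuous; thus $y \colonequals F_1(x)$ directly plays the role of $r(L_1(x))$ and the same compositional argument goes through. The delicate aspect in both cases is the careful verification of inward graph convergence of the approximating maps, ensuring that the perturbation introduced by $r$ (respectively by the approximate values of $G_1$) is absorbed in the limit along the graph of $\Phi$.
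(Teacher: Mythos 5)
Your construction is essentially the paper's own proof: the same choice of $\delta_1,\delta_2$ with $\L_2^{\delta_2}\circ\L_1^{\delta_1}\subset(\L_2\circ\L_1)^{\delta}$, the same shrinking of $U_1$ so the intermediate point lands in $U_2$, the same pointwise composite $G(x)=\{L_2\circ L_1 : L_1\in G_1(x),\ L_2\in G_2(L_1x)\}$ with the same inclusion check, and the same strategy of proving the CCA property by composing CCA maps with a retraction. The only differences are that you attack the directional case directly (the paper proves only $S_1=\mM_1$, $S_2=\mM_2$, where the retraction is just the projection onto the ball $U_2$), and that your CCA verification via hand-built approximations $\phi^k$ — including the unjustified step of shrinking $U$ inside $U_2$ while keeping $U\cap S_2$ a retract, which should instead be replaced by noting that the relevant points $L_1(x)$ lie in the interior of $U_2$ and using uniform continuity of the retraction — would be cleaner if you wrote $\Phi$ directly as a composite of CCA maps, as the paper does with its explicit five-map factorization and Theorem~\ref{CCA-comp}.
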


\begin{proof}
We only prove the ``non-directional'' case, where $S_1 = \mM_1$ and $S_2 = \mM_2$, which suffices for our purposes. In this case, the ``retraction hypothesis'' holds trivially, as $U \cap S_2 = U \cap \mM_2 = U$ is clearly a retract of $U$. The proof of the ``directional variant'' requires only slight further modifications. We provide the proof here because the original proof given in \cite{Sussmann2008} is for ``approximate GDQs'' (which is another generalization of the theory of classical differentials), with modifications for the case of GDQs being hinted only at the end, spans over four pages, whereas our version is based on unpublished notes of H. J. Sussmann.

Without loss of generality, we can assume that $\mM_i = \rr^{n_i}$ for some integers $n_i>0$ and $x_i = {\bf 0}$, $i=1,2,3$. Fix $\d >0$. Let $\L\colonequals \L_2 \circ \L_1$ and choose $\delta_1, \delta_2> 0$ such that
\begin{equation*}
\L_2^{\delta_2} \circ \L_1^{\delta_1}
\subset \L^\delta.
\end{equation*}
According to Definition~\ref{GDQ-def}, let $G_1 \colon U_1 \subset \rr^{n_1} \rightrightarrows \L_1^{\delta_1}$ be a CCA set-valued map defined on a compact neighborhood $U_1$ of ${\bf 0}$ in $\rr^{n_1}$ such that $G_1(x)x \subset F_1(x)$ for every $x\in U_1$. Similarly, let $G_2 \colon U_2 \subset \rr^{n_2} \rightrightarrows \L_2^{\delta_2}$ be a CCA set-valued map defined on a compact neighborhood $U_2$ of ${\bf 0}$ in $\rr^{n_2}$ such that $G_2(y)y \subset F_2(y)$ for every $y\in U_2$. The neighborhoods $U_1$, $U_2$ can and are chosen as closed balls centered at ${\bf 0}$ in $\rr^{n_1}$ and $\rr^{n_2}$, respectively.

We want to find a CCA set-valued map $G_3 \colon W \subset \rr^{n_1}\rightrightarrows \L_3^\delta$ such that $F_3(x) \supset G_3(x)x$ for every $x \in W$, where $W$ is a compact neighborhood of ${\bf 0}$ in $\rr^{n_1}$. To this end, we can observe that for $x\in U_1$,
\begin{equation*}
F_2 \circ F_1 (x) = F_2(F_1(x)) \supset F_2(G_1(x)x) \supset G_2(G_1(x)x)(G_1(x)x) = G_3(x)x,
\end{equation*}
where
\begin{equation}\label{G3-def}
G_3(x) \colonequals G_2(G_1(x)x) \circ G_1(x),
\end{equation}
provided that the mapping $x \mapsto G_2(G_1(x))$ makes sense, i.e., we need that
\begin{equation}\label{cond-ch-r}
G_1(x)x \subset U_2~\text{for every}~x\in U_1.
\end{equation}
If we take $U_1$ sufficiently small, then inclusion \eqref{cond-ch-r} is satisfied. Indeed, let the radius of the ball $U_2$ be $r_2>0$ and let
\begin{equation*}
\kappa \colonequals \max \{|L| \; : \; L \in \L_1^{\delta_1}\},
\end{equation*}
then for $x \in U_1$ and $L\in G(x)$, since $G_1(x) \subset \L_1^{\delta_1}$, we have that $|Lx| \le \kappa r_1$, where $r_1>0$ is the radius of the ball $U_1$. Then inclusion \eqref{cond-ch-r} holds, provided that $r_1$ is chosen less than or equal to $r_2/\kappa$. Hence, we choose $U_1$ so that inclusion \eqref{cond-ch-r} holds and define $G_3 \colon W \rightrightarrows \L_3^\d$ (with $W \colonequals U_1$) such that for every $x\in W=U_1$, the set $G_3(x)$ is given by \eqref{G3-def}. To be clear, by \eqref{G3-def} we mean
\begin{equation*}
G_3(x)\colonequals\big\{L_2 \circ L_1 \;:\; L_1 \in G_1(x),~L_2 \in G_2(L_1x)\big\}.
\end{equation*}
Clearly $G_3$ is well-defined and $G_3(x)x \subset F_3(x)$ for every $x\in U_1$. To complete the proof, it only remains to show that $G_3$ is CCA. We do this by expressing $G_3$ as a composite of CCA maps as follows:
\begin{equation*}
G_3 = \G_5 \circ \G_4 \circ \G_3 \circ \G_2 \circ \G_1,
\end{equation*}
where
\begin{enumerate}[(a)]
\item $\G_1 \colon U_1 \rightrightarrows U_1 \times \L_1^{\d_1}$, $x\mapsto \{x\} \times G_1(x)$;
\item $\G_2 \colon U_1 \times \L_1^{\d_1} \to U_1 \times \L_1^{\d_1} \times \rr^{n_2}$, $(x, L) \mapsto (x, L, Lx)$;
\item $\G_3 \colon U_1 \times \L_1^{\d_1} \times \rr^{n_2} \to U_1 \times \L_1^{\d_1} \times U_2$, $(x,L,y) \mapsto (x, L, \textsf{r}(y))$, where $\textsf{r} \colon \rr^{n_2} \to U_2$ is a retraction (i.e., a continuous map such that $\textsf{r}(y) = y$ for every $y\in U_2$);
\item $\G_4 \colon U_1 \times \L_1^{\d_1} \times U_2 \rightrightarrows 
U_1 \times \L_1^{\d_1} \times U_2 \times \L_2^{\d_2}$,
$(x, L, y) \mapsto \{x\} \times \{L\} \times \{y \} \times G_2(y)$;
\item $\G_5 \colon U_1 \times \L_1^{\d_1} \times U_2 \times \L_2^{\d_2} \times \L^\d$, 
$(x, L_1, y, L_2) \mapsto L_2 \circ L_1$. 
\end{enumerate}
It is clear that each $\G_i$, $i=1,2,3,4,5$ is CCA, so $G_3$ is CCA in view of Theorem~\ref{CCA-comp} and the proof is complete.
\end{proof}

Before proceeding further, let us fix another notation: for a real number $r>0$ and an integer $n>0$, the notation $\overline{\mathfrak{B}}_r(\bar{x})$ stands for the closed ball in $\rr^n$ of radius $r$ centered at $\bar{x}\in\rr^n$. The following result establishes an open mapping property of GDQs:
\begin{theorem}[Open mapping theorem for set-valued maps via GDQs]\label{open-t}
Let {$F\colon \rr^n\rightrightarrows\rr^m$} be a set-valued map, $x_*\in\rr^n$, $y_*\in F(x_*)$ and let the non-empty compact set $\L\subset\emph{\textsf{Lin}}(\rr^n,\rr^m)$ be a GDQ of $F$ at $(x_*,y_*)$ in the direction of $\rr^n$. If $\L$ consists of surjective linear maps, then $F$ is an \emph{open map} at $(x_*, y_*)$, in the sense that for every neighborhood $U$ of $x_*$ in $\rr^n$, $F(U)$ is a neighborhood of $y_*$ in $\rr^m$. In addition, the ``linear rate property'' holds: there exist real numbers $\bve, \kappa>0$ such that
\begin{equation*}
y_* + \overline{\mathfrak{B}}_\ve({\bf 0}) \subset F\left(x_* + \overline{\mathfrak{B}}_{\kappa\ve}({\bf 0})\right)
\end{equation*}
for every real number $\ve \in (0,\bve]$, i.e., if $y \in \rr^m$ is such that $|y-y_*| \le \ve$, then there exists $x\in \rr^n$ with $|x - x_*| \le \kappa\ve$ such that $y \in F(x)$.
\end{theorem}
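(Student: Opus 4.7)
The plan is to apply the definition of GDQ to obtain a CCA ``linearizing'' set-valued map, upgrade pointwise surjectivity on $\L$ to uniform surjectivity on a small enlargement $\L^\delta$, use the Moore--Penrose pseudoinverse to produce a continuous right inverse on $\L^\delta$, then solve the inclusion $y - y_* \in G(x)(x-x_*)$ via Brouwer's fixed point theorem applied to continuous single-valued approximants of $G$, and finally pass to the limit using the inward graph convergence inherent to the CCA notion.

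First I would establish uniform surjectivity. Since each $L \in \L$ is surjective, the classical open mapping theorem yields a constant $\rho_L > 0$ with $L(\overline{\mathfrak{B}}_1({\bf 0})) \supset \overline{\mathfrak{B}}_{\rho_L}({\bf 0})$; by compactness of $\L$ together with lower semicontinuity of the smallest singular value on the open set of surjective maps in $\textsf{Lin}(\rr^n,\rr^m)$, one obtains a uniform $\rho > 0$ working for all $L \in \L$, and a $\delta > 0$ small enough that every $L' \in \L^\delta$ is still surjective with $L'(\overline{\mathfrak{B}}_1({\bf 0})) \supset \overline{\mathfrak{B}}_{\rho/2}({\bf 0})$. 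Applying Definition~\ref{GDQ-def} with this $\delta$ produces a compact neighborhood $U$ of ${\bf 0}$ in $\rr^n$ and a CCA set-valued map $G\colon x_* + U \rightrightarrows \L^\delta$ satisfying $y_* + G(x)(x-x_*) \subset F(x)$ for every $x \in x_* + U$. On $\L^\delta$ the Moore--Penrose pseudoinverse $L \mapsto L^+$ is continuous (since $\L^\delta$ lies in the constant-rank stratum of surjective maps) and satisfies $L L^+ = I_{\rr^m}$ together with $|L^+| \le \kappa$, where $\kappa \colonequals 2/\rho$.

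Next I would shrink to a ball and run Brouwer. Choose $r > 0$ with $\overline{\mathfrak{B}}_r({\bf 0}) \subset U$, set $\bve \colonequals r/\kappa$ and $W \colonequals x_* + \overline{\mathfrak{B}}_r({\bf 0})$. Fix any $\ve \in (0,\bve]$ and $y \in \rr^m$ with $|y - y_*| \le \ve$. By the CCA property, the restriction $G|_W$ is the inward graph limit of a sequence $\{g_j\}_{j\in\nn}$ of continuous single-valued maps $g_j \colon W \to \L^\delta$. For each $j$, define the continuous self-map of the convex compact set $W$ by
\begin{equation*}
T_j(x) \colonequals x_* + g_j(x)^+ (y - y_*).
\end{equation*}
The inclusion $T_j(W) \subset W$ is immediate because $|g_j(x)^+(y-y_*)| \le \kappa\ve \le r$. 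Brouwer's fixed point theorem then yields some $x_j \in W$ with $T_j(x_j) = x_j$, whence $g_j(x_j)(x_j - x_*) = g_j(x_j) g_j(x_j)^+(y-y_*) = y - y_*$.

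Finally I would pass to the limit. By compactness of $W$ and $\L^\delta$, extract a subsequence (not relabeled) with $x_j \to x_\infty \in W$ and $g_j(x_j) \to L_\infty \in \L^\delta$; continuity gives $L_\infty(x_\infty - x_*) = y - y_*$. Since $\textsf{Gr}(G|_W)$ is compact (hence equal to the intersection of its open neighborhoods in $W \times \L^\delta$) and each $\textsf{Gr}(g_j)$ is eventually contained in every such neighborhood by inward graph convergence, the accumulation point $(x_\infty, L_\infty)$ must lie in $\textsf{Gr}(G|_W)$, i.e., $L_\infty \in G(x_\infty)$. Therefore $y - y_* \in G(x_\infty)(x_\infty - x_*)$, giving $y \in y_* + G(x_\infty)(x_\infty - x_*) \subset F(x_\infty)$ and thus the linear rate inclusion $y_* + \overline{\mathfrak{B}}_\ve({\bf 0}) \subset F(x_* + \overline{\mathfrak{B}}_{\kappa\ve}({\bf 0}))$ for every $\ve \in (0,\bve]$; the openness assertion follows at once. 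I expect the main subtlety to be this last step: the inward graph convergence only gives a one-sided approximation, and one must exploit the compactness of $\textsf{Gr}(G|_W)$ to ensure that any sequential limit of points from $\textsf{Gr}(g_j)$ actually lands on the target graph rather than only near it.
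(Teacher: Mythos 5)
Your proposal is correct and follows essentially the same route as the paper: pick $\delta$ so that $\L^\delta$ still consists of surjective maps, extract the CCA map $G$ from Definition~\ref{GDQ-def}, use the uniformly bounded Moore--Penrose right inverse, and solve $y-y_*\in G(x)(x-x_*)$ by a fixed-point argument. The only difference is that where the paper invokes Cellina's extension of Kakutani's theorem (Theorem~\ref{Kak-ext}) applied to the CCA map $x\mapsto G^{\#}(x)(y-y_*)$, you inline its proof (Brouwer on the continuous approximants plus the limit passage via inward graph convergence and compactness of $\textsf{Gr}(G|_W)$), which is exactly how the paper proves that theorem; note also that your fixed points automatically satisfy $|x_j-x_*|\le\kappa\ve$, which is the (implicitly used) fact giving the stated linear rate.
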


Actually, Sussmann has established a more general result in \cite{Sussmann2008}, which also implies a ``directional variant'' of Theorem~\ref{open-t}. However, Theorem~\ref{open-t} suffices for our purposes. For the reader's convenience, we provide a direct and concise proof of Theorem~\ref{open-t} in Appendix~\ref{a-OMT}.

\medskip

In Appendix~\ref{a-GDQ}, we prove the following result, which gives a sufficient condition for a set to be a GDQ:
\begin{theorem}[A sufficient condition for a set to be a generalized differential quotient]\label{sucole}
Let $\mM$ be a finite-dimensional manifold {of class $C^{1}$}, let $q>0$ be an integer and let $F\colon \mM \rightrightarrows \rr^m$ be an upper semi-continuous set-valued map taking non-empty closed values. Let $x_*\in \mM$, $y_*\in F(x_*)$, and let $\L\subset\emph{\textsf{Lin}}(T_{x_*}\mM,\rr^m)$ be a non-empty compact and convex set. Moreover, let $V\subset\mM$ be a neighborhood of $x_*$ (endowed with a Riemannian distance $d\colon V\times V\to\rr$) and for each ${j\in \nn}$, let $F_j\colon V\rightrightarrows\rr^m$ be a set-valued map, such that the following conditions are satisfied:
\begin{enumerate}[(i)]
\item For each $j\in \nn$, $F_j$ is an upper semi-continuous set-valued map taking non-empty compact and convex values;\footnote{For each $j\in\nn$, the set-valued map $F_j$ has a compact graph, in view of the following known fact: if $X,Y$ are metric spaces and the set-valued map $F\colon X\rightrightarrows Y$ takes compact values, then $F$ is upper semi-continuous if and only if $F$ has a compact graph (see \cite{Aubin1984}).}
\item The sequence $\{F_j\}_{j\in \nn}$ of set-valued maps inward graph converges to $F$;
\item For each $j\in\nn$, the following inequality:
\begin{equation*}
\inf\left\{|y-y_*-L(x-x_*)|\; :\; (y,L)\in F_j(x)\times\L\right\}\leq d(x,x_*)\tilde{\o}_j(d(x,x_*))
\end{equation*}
holds for every $x\in V$, where the sequence $\{\tilde{\o}_j\}_{j\in \nn}$ is made of functions which converge uniformly to some modulus $\tilde{\o}$ {on} an interval $[0,r]$, for some real number $r>0$, as $j$ goes to infinity.\footnote{A real-valued function $\omega\colon [0,+\infty)\to [0,+\infty)$, is said to be a \emph{modulus}, if $\omega(0)=0$, and if $\omega$ is increasing and continuous from the right at $0\in\rr$, i.e., $\lim_{r\downarrow 0}\omega(r)=\omega(0)$.}
\end{enumerate}
Then, $\L$ is a {GDQ} of $F$ at $(x_*,y_*)$ in the direction of $\mM$.
\end{theorem}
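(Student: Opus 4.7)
Working in local coordinates we may assume $\mM=\rr^n$ and $x_*=\mathbf{0}$, $y_*=\mathbf{0}$. Fix $\d>0$. Since $\tilde\o$ is a modulus and $\tilde\o_j\to\tilde\o$ uniformly on $[0,r]$, we can choose $r_0\in(0,r]$ and $j_0\in\nn$ such that $\tilde\o_j(s)\le\d/2$ for every $s\in[0,r_0]$ and every $j\ge j_0$; set $U\colonequals\overline{\mathfrak{B}}_{r_0}(\mathbf{0})$. The goal is to exhibit a CCA map $G\colon U\rightrightarrows\L^{\d}$ such that $G(x)\cdot x\subset F(x)$ for every $x\in U$, which is precisely what Definition~\ref{GDQ-def} requires in the direction of $\mM$.

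\textbf{Rank-one correction to build convex-valued approximants.} For each $j\ge j_0$ and each $x\in U$, define
\[
\Phi_j(x)\colonequals\bigl\{L\in\L^{\tilde\o_j(|x|)}\; :\; Lx\in F_j(x)\bigr\}.
\]
Non-emptiness of $\Phi_j(x)$ follows from hypothesis (iii): the infimum there is attained at some $(y_0,L_0)\in F_j(x)\times\L$ by compactness of $\L$ and closedness of $F_j(x)$, and for $x\ne\mathbf{0}$ the rank-one modification
\[
L\colonequals L_0+\tfrac{1}{|x|^2}(y_0-L_0x)\otimes x
\]
satisfies $Lx=y_0\in F_j(x)$ and $|L-L_0|_{\mathrm{op}}=|y_0-L_0x|/|x|\le\tilde\o_j(|x|)$, so $L\in\Phi_j(x)$; at $x=\mathbf{0}$ the inequality forces $\mathbf{0}\in F_j(\mathbf{0})$, giving $\Phi_j(\mathbf{0})=\L$. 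Each value $\Phi_j(x)$ is convex (intersection of the convex thickening $\L^{\tilde\o_j(|x|)}$ with the linearly defined convex set $\{L:Lx\in F_j(x)\}$, whose convexity uses convexity of $F_j(x)$), and $\Phi_j$ has closed graph on $U$ thanks to upper semi-continuity of $F_j$ and continuity of $(L,x)\mapsto Lx$, hence is USC with non-empty compact convex values, all contained in $\L^{\d/2}$. By Cellina's approximation theorem each $\Phi_j$ is therefore CCA on $U$, and by construction $\Phi_j(x)\cdot x\subset F_j(x)$.

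\textbf{Passage from $F_j$ to $F$ via an upper graph limit.} Inside the compact set $U\times\L^{\d}$ define
\[
\textsf{Gr}(\Phi)\colonequals\limsup_{j\to\infty}\textsf{Gr}(\Phi_j),
\]
the Painlev\'e--Kuratowski upper limit. Then $\textsf{Gr}(\Phi)$ is compact, $\Phi$ has non-empty compact values (select $L_j\in\Phi_j(x)$ and pass to a convergent subsequence using compactness of $\L^{\d}$), and a standard argument in the compact ambient $U\times\L^{\d}$ shows that $\{\Phi_j\}$ inward-graph-converges to $\Phi$ in the sense of Definition~\ref{igc-def}. The crucial inclusion $\Phi(x)\cdot x\subset F(x)$ now follows by a limit argument: for $L\in\Phi(x)$ pick $(x_{j_k},L_{j_k})\to(x,L)$ with $L_{j_k}\in\Phi_{j_k}(x_{j_k})$, so that $(x_{j_k},L_{j_k}x_{j_k})\in\textsf{Gr}(F_{j_k})$; by the inward graph convergence of $\{F_j\}$ to $F$ and closedness of $\textsf{Gr}(F)$ (which holds because $F$ is upper semi-continuous with closed values), the limit point $(x,Lx)$ belongs to $\textsf{Gr}(F)$.

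\textbf{CCA of $\Phi$ via diagonalisation, and conclusion.} For each $j\ge j_0$ the CCA property of $\Phi_j$ furnishes continuous $\phi_j^{(k)}\colon U\to\L^{\d}$ with $\textsf{Gr}(\phi_j^{(k)})\subset\textsf{Gr}(\Phi_j)^{1/k}$ for $k$ large. Combining this with the inward convergence $\Phi_j\to\Phi$ and the compactness of $\textsf{Gr}(\Phi)$, for each $\ell\in\nn$ we may pick indices $j_\ell$ and $k_\ell$ such that $\textsf{Gr}(\Phi_{j_\ell})\subset\textsf{Gr}(\Phi)^{1/(2\ell)}$ and $\textsf{Gr}(\phi_{j_\ell}^{(k_\ell)})\subset\textsf{Gr}(\Phi_{j_\ell})^{1/(2\ell)}$, whence $\textsf{Gr}(\phi_{j_\ell}^{(k_\ell)})\subset\textsf{Gr}(\Phi)^{1/\ell}$. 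This exhibits $\Phi$ as the inward graph limit of continuous single-valued maps, so $\Phi$ is CCA on $U$. Setting $G\colonequals\Phi$ fulfils every requirement of Definition~\ref{GDQ-def}, so $\L$ is a GDQ of $F$ at $(x_*,y_*)$ in the direction of $\mM$. The main obstacle is this last step combined with the passage to the upper limit: one has to make sure that the essential membership $Lx\in F_j(x)$, engineered through the rank-one correction at the level of the approximants, is inherited by the limit object $\Phi$ without losing either its upper semi-continuity or its CCA regularity.
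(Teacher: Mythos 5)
Your overall architecture matches the paper's: build convex-valued upper semi-continuous approximants $\Phi_j$ via a rank-one correction (identical formula), invoke Cellina's approximate selection to get single-valued approximations, and diagonalize; the one genuinely different move is defining the limit object $\Phi$ as a Painlev\'e--Kuratowski upper limit of the graphs of the $\Phi_j$'s, whereas the paper defines $G(x)\colonequals\{L\in\L^{\d}:Lx\in F(x)\}$ directly from $F$ and then separately verifies that $\{G_j|_K\}$ inward-graph-converges to $G|_K$. Both routes work; yours trades an explicit description of the limit map for an automatic inward-graph-convergence statement in the compact ambient $U\times\L^{\d}$, at the small cost of having to re-derive the defining inclusion $\Phi(x)x\subset F(x)$ by a limit argument, which you do correctly.

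There is, however, one genuine gap. You set $\Phi_j(x)\colonequals\{L\in\L^{\tilde\o_j(|x|)}:Lx\in F_j(x)\}$ with a thickening radius that depends on $x$, and then assert that $\Phi_j$ has closed graph ``thanks to upper semi-continuity of $F_j$ and continuity of $(L,x)\mapsto Lx$.'' This omits the constraint $L\in\L^{\tilde\o_j(|x|)}$, and that constraint does \emph{not} automatically pass to the limit: if $x_k\to x$ and $L_k\to L$ with $L_k\in\L^{\tilde\o_j(|x_k|)}$, then $\mathrm{dist}(L,\L)\le\limsup_k\tilde\o_j(|x_k|)$, but the hypotheses only require $\tilde\o_j$ to converge uniformly to a modulus $\tilde\o$ --- the $\tilde\o_j$ themselves need not be moduli, monotone, or upper semi-continuous --- so one cannot conclude $\mathrm{dist}(L,\L)\le\tilde\o_j(|x|)$. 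Consequently the closed-graph, USC, and (via Cellina) CCA claims for $\Phi_j$ are unjustified as written. The fix is small and is in fact already implicit in your own remark that every $\Phi_j(x)$ lies in $\L^{\d/2}$: replace the variable thickening with the fixed one, $\Phi_j(x)\colonequals\{L\in\L^{\d/2}:Lx\in F_j(x)\}$, which is exactly what the paper does (with $\L^\d$). The rank-one-corrected operator $L_0+\tfrac{1}{|x|^2}(y_0-L_0x)\otimes x$ still lands in this fixed set for $j\ge j_0$ and $x\in U$, convexity is preserved, the closed-graph argument becomes immediate, and the remainder of your proof goes through unchanged.
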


\begin{corollary}\label{GDQ-c}
Let $\mM$ be a manifold {of class $C^{1}$}, let $q>0$ be an integer and let $F\colon \mM\rightrightarrows\rr^m$ be an upper semi-continuous set-valued map taking non-empty compact and convex values. Let $x_*\in \mM$, $y_*\in F(x_*)$, and let $\L\subset\emph{\textsf{Lin}}(T_{x_*}\mM,\rr^m)$ be a non-empty compact and convex set. Furthermore, let $V\subset\mM$ be a neighborhood of $x_*$ (endowed with a Riemannian distance $d\colon V\times V\to\rr$), such that the following inequality:
\begin{equation*}
\inf\left\{|y-y_*-L(x-x_*)|\; :\; (y,L)\in F(x)\times\L\right\}\leq d(x,x_*)\tilde{\o}(d(x,x_*))
\end{equation*}
holds for every $x\in V$ and for some modulus $\tilde{\o}$. Then, $\L$ is a {GDQ} of $F$ at $(x_*,y_*)$ in the direction of $\mM$.
\end{corollary}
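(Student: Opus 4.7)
The plan is to derive Corollary~\ref{GDQ-c} as an immediate specialization of Theorem~\ref{sucole}, by taking the constant approximating sequence $F_j \colonequals F$ and the constant sequence of moduli $\tilde{\o}_j \colonequals \tilde{\o}$ for every $j\in\nn$. The whole content of the corollary, then, is to verify that each of the three hypotheses of Theorem~\ref{sucole} is automatically satisfied under this choice, so that the conclusion ($\L$ is a GDQ of $F$ at $(x_*,y_*)$ in the direction of $\mM$) transfers verbatim.

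First I would check hypothesis (i): since $F_j=F$ for every $j\in\nn$, and $F$ is upper semi-continuous with non-empty compact and convex values by assumption, condition (i) holds trivially. This is also the step that lets us appeal to the footnoted fact in the statement of Theorem~\ref{sucole}: a set-valued map with compact values is upper semi-continuous if and only if it has a compact graph; hence, after restricting to a sufficiently small compact neighborhood $V'\subset V$ of $x_*$, each $F_j|_{V'}=F|_{V'}$ acquires a compact graph, so that inward graph convergence is meaningful.

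Next I would check hypothesis (ii): the constant sequence $\{F_j\}_{j\in\nn}=\{F\}_{j\in\nn}$ inward graph converges to $F$ in the sense of Definition~\ref{igc-def} trivially, since for any open set $\O\subset V'\times\rr^m$ containing $\textsf{Gr}(F|_{V'})$ we have $\textsf{Gr}(F_j|_{V'})=\textsf{Gr}(F|_{V'})\subset\O$ for every $j\in\nn$. For hypothesis (iii), the uniform convergence on $[0,r]$ of the constant sequence $\{\tilde{\o}\}_{j\in\nn}$ to $\tilde{\o}$ is immediate, while the required inequality
\begin{equation*}
\inf\left\{|y-y_*-L(x-x_*)|\; :\; (y,L)\in F_j(x)\times\L\right\}\leq d(x,x_*)\tilde{\o}_j(d(x,x_*))
\end{equation*}
for $x\in V$ reduces, under the substitution $F_j=F$ and $\tilde{\o}_j=\tilde{\o}$, precisely to the standing assumption of the corollary.

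Having verified all three hypotheses, Theorem~\ref{sucole} applies directly and yields the desired conclusion. There is really no technical obstacle in this argument: the only mildly delicate point is the localization needed to make the compact-graph condition of Definition~\ref{igc-def} available, but this is handled at once by shrinking $V$ to a compact neighborhood of $x_*$ and invoking the upper semi-continuity of $F$ together with its compact values. Accordingly, the corollary should be viewed as the degenerate case of Theorem~\ref{sucole} in which no genuine approximation of $F$ is needed, and it will be used in the sequel precisely in those situations where the pointwise estimate on $F$ itself is already at hand.
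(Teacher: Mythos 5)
Your proposal is correct and follows exactly the paper's own argument: the paper's proof is precisely to apply Theorem~\ref{sucole} with the constant choices $F_j\colonequals F$ and $\tilde{\o}_j\colonequals\tilde{\o}$. Your additional verification that each hypothesis of Theorem~\ref{sucole} is trivially met under these choices is a faithful (and slightly more explicit) unpacking of the same one-line reduction.
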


\begin{proof}
{Use Theorem~\ref{sucole}, with the set-valued map $F_j\colonequals F$ and the function $\tilde{\o}_j\colonequals \tilde{\o}$ for each $j\in\nn$.}
\end{proof}

\begin{remark} 
In this paper, we only use Corollary~\ref{GDQ-c}, with the set-valued map $F$ being assumed to be continuous.
\end{remark}

\subsection{Set-Valued Lie Brackets as GDQs of Multi-Flows}
We shortly recall the crucial estimate obtained in \cite{Feleqi2017} (which is valid for a certain composition of the flows of the vector fields {corresponding to the associated set-valued Lie bracket}), using which an asymptotic formula can be established. {Thanks} to this estimate, we are able to show that certain compositions of suitably re-parametrized multi-flow maps possess a GDQ that can be expressed in terms of a set-valued Lie bracket (see Theorem~\ref{GDQ-lem}). To proceed further, we need the following definition:
\begin{definition}[Multi-Flow map]\label{commutator}
Let $B$ be a formal bracket, with $\emph{\textsf{Seq}}(B)=X_{\mu+1}\hdots X_{\mu+m}$ for some integers $\mu\geq 0$ and $m\geq 1$. Moreover, for an integer $r\geq\mu+m$, let $\mathbf{g}=(g_1,\hdots,g_{r})$ be an $r$-tuple of vector fields of class $C^{B-1,1}$ (respectively, $C^{B}$, if $\emph{\textsf{\LL}}(B)=1$) on a manifold $\mM$ and let ${\bf t}=(t_{\mu+1},\hdots,t_{\mu+m})\in\rr^m$ {(with a sufficiently small norm)}. We define the (possibly set-valued, in the case when $\emph{\textsf{\LL}}(B)=1$) \emph{multi}-\emph{flow mapping} $({\bf t},x)\mapsto\Psi_{B}^{{\bf g}}({\bf t})(x)$ associated with $B$ and ${\bf g}$, in a recursive manner as follows:
\begin{enumerate}[(i)]
\item{If $\emph{\textsf{\LL}}(B)=1$, i.e., $B=X_j$ for $j= \mu+1$, then we set
\begin{equation*}
\Psi_{B}^{{\bf g}}(t_j)(x)\colonequals e^{t_jg_j}(x)~\text{for every}~x\in\mM;\footnote{For a continuous vector field $g$, the notation $e^{tg}(\bar{x})$ stands for the set of values at $t\in\rr$ of all solutions of the Cauchy problem: $\dot{x}=g(x)$,~$x(0)=\bar{x}$, which is a singleton as soon as $g$ is locally Lipschitz continuous.}
\end{equation*}}
\item{If $\emph{\textsf{\LL}}(B)>1$ and $B=[B_1,B_2]$, with $\emph{\textsf{\LL}}(B_1)=m_1$ and $\emph{\textsf{\LL}}(B_2)=m_2$, then we set
\begin{equation*}
\Psi_B^{{\bf g}}({\bf t})(x)\colonequals\left(\Psi_{B_2}^{{\bf g}}({\bf t}_{2})\right)^{-1}\circ\left(\Psi_{B_1}^{{\bf g}}({\bf t}_{1})\right)^{-1}\circ\Psi_{B_2}^{{\bf g}}({\bf t}_{2})\circ\Psi_{B_1}^{{\bf g}}({\bf t}_{1})(x)~\text{for every}~x\in\mM,
\end{equation*}
where ${\bf t}_1=(t_{\mu+1},\hdots,t_{\mu+m_{1}})\in\rr^{m_1}$ and ${\bf t}_2=(t_{\mu+1+m_{1}},\hdots,t_{\mu+ m})\in\rr^{m_2}$.}
\end{enumerate}
\end{definition}

Before proceeding further, let us illustrate Definition \ref{commutator} with a few examples given below.
\begin{examples}
\begin{enumerate}[(i)]
\item{If $B\colonequals [X_1,X_2]$ is a formal bracket and ${\bf g}=(g_1,g_2)$ is a 2-tuple of vector fields of class $C^{B-1,1}$, then for every $x\in\mM$, the multi-flow map $\Psi_B^{{\bf g}}({\bf t})(x)$ is defined as follows:
\begin{equation*}
\Psi_B^{{\bf g}}(t_1,t_2)(x)\colonequals e^{-t_2g_2}\circ e^{-t_1g_1}\circ e^{t_2g_2}\circ e^{t_1g_1}(x).
\end{equation*}}
\item{Let $B\colonequals [X_2,[X_3,X_4]]$ be a formal bracket and let the 4-tuple ${\bf g}=(g_1,g_2,g_3,g_4)$ of vector fields be of class $C^{B-1,1}$, then for every $x\in\mM$, the multi-flow map $\Psi_B^{{\bf g}}({\bf t})(x)$ is defined as follows:
\begin{equation*}
\Psi_B^{{\bf g}}(t_2,t_3,t_4)(x)\colonequals \left(e^{-t_3g_3}\circ e^{-t_4g_4}\circ e^{t_3g_3}\circ e^{t_4g_4}\right)\circ e^{-t_2g_2}\circ\left(e^{-t_4g_4}\circ e^{-t_3g_3}\circ e^{t_4g_4}\circ e^{t_3g_3}\right)\circ e^{t_2g_2}(x).
\end{equation*}}
\item{If $B\colonequals [[X_2,X_3],[X_4,X_5]]$ is a formal bracket and ${\bf g}=(g_1,g_2,g_3,g_4,g_5)$ is a 5-tuple of vector fields of class $C^{B-1,1}$, then for every $x\in\mM$, the multi-flow map $\Psi_B^{{\bf g}}({\bf t})(x)$ is defined as follows:
\begin{align*}
\Psi_B^{{\bf g}}(t_2,t_3,t_4,t_5)(x)\colonequals &\left(e^{-t_4g_4}\circ e^{-t_5g_5}\circ e^{t_4g_4}\circ e^{t_5g_5}\right)\circ\left(e^{-t_2g_2}\circ e^{-t_3g_3}\circ e^{t_2g_2}\circ e^{t_3g_3}\right)\\
&\circ\left(e^{-t_5g_5}\circ e^{-t_4g_4}\circ e^{t_5g_5}\circ e^{t_4g_4}\right)\circ\left(e^{-t_3g_3}\circ e^{-t_2g_2}\circ e^{t_3g_3}\circ e^{t_2g_2}\right)(x).
\end{align*}}
\end{enumerate}
\end{examples}

The following result, that has been proven in \cite{Feleqi2017}, extends a classical result that is valid for smooth vector fields:
\begin{theorem}[An estimate for obtaining an asymptotic formula]\label{as-th}
Let $B$ be a formal bracket, with $\emph{\textsf{Seq}}(B)=X_{\mu+1}\hdots X_{\mu+m}$ for some integers $\mu\geq 0$ and $m\geq 1$. Furthermore, for an integer $r\geq\mu+m$, let $\mathbf{g}=(g_1,\hdots,g_{r})$ be an $r$-tuple of vector fields of class $C^{B-1,1}$ (respectively, $C^{B}$, if $\emph{\textsf{\LL}}(B)=1$) on $\rr^n$. Then, for any $x_*\in\rr^m$, there exists a real number $\delta>0$ and a modulus $\tilde{\omega}$, such that for any $({\bf t},x)\in\overline{\mathfrak{B}}_\delta(({\bf 0},x_*))$, the following inequality:
\begin{equation*}
\emph{dist}\big(y-x,t_{1}\cdots t_{m} B_{\emph{set}}({\bf g})(x_*)\big)\leq |t_{1}\cdots t_{m}|\tilde{\omega}(|(t_1,\hdots,t_m)|+|x-x_*|)
\end{equation*}
holds for every $y\in\Psi_B^{{\bf g}}({\bf t})(x)$, where ${\bf t}=(t_{1},\hdots,t_{m})\in\rr^m$.\footnote{As is customary, the distance from a point $x\in\rr^{n}$ to a set $K\subset\rr^{n}$, is defined as follows:
\begin{equation*}
\text{dist}\big(x,K\big)\colonequals\inf\left\{|x-y|\; :\; y\in K\right\}.
\end{equation*}}
\end{theorem}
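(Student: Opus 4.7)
The natural approach is induction on $\textsf{\LL}(B)$. For $\textsf{\LL}(B) = 1$, so $B = X_{\mu+1}$ and $\Psi_B^{\bf g}(t_{\mu+1})(x) = e^{t_{\mu+1} g_{\mu+1}}(x)$, the estimate reduces to the integral identity
\begin{equation*}
e^{tg_{\mu+1}}(x) - x - t\, g_{\mu+1}(x_*) = \int_0^{t}\bigl(g_{\mu+1}(e^{sg_{\mu+1}}(x)) - g_{\mu+1}(x_*)\bigr)\,ds,
\end{equation*}
which yields the desired bound using the continuity of $g_{\mu+1}$ together with Gronwall-type control of the distance $|e^{sg_{\mu+1}}(x)-x_*|$ in terms of $|t|+|x-x_*|$.

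For the inductive step, write $B = [B_1,B_2]$ with $\textsf{\LL}(B_i) = m_i$ and ${\bf t} = ({\bf t}_1,{\bf t}_2)$, so that $\Psi_B^{\bf g}({\bf t}) = \Psi_{B_2}^{\bf g}({\bf t}_2)^{-1}\circ \Psi_{B_1}^{\bf g}({\bf t}_1)^{-1}\circ \Psi_{B_2}^{\bf g}({\bf t}_2)\circ \Psi_{B_1}^{\bf g}({\bf t}_1)$. By induction there exist moduli such that for $i=1,2$,
\begin{equation*}
\Psi_{B_i}^{\bf g}({\bf t}_i)(z) = z + (t_{\cdot})_{B_i}\, v_i(z) + R_i({\bf t}_i, z),
\end{equation*}
where $(t_{\cdot})_{B_i}$ denotes the product of the components of ${\bf t}_i$, the vector $v_i(z)$ lies in $(B_i)_\text{set}({\bf g})(z)$, and the remainder $R_i$ is $o((t_{\cdot})_{B_i})$ uniformly on a neighborhood of $x_*$. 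Composing these four expansions and collecting terms, all contributions of order strictly lower than the product $t_1\cdots t_m$ cancel pairwise (this is the classical ``commutator cancellation'' that also underlies the Baker--Campbell--Hausdorff formula), leaving a principal term of the form $(t_{\cdot})_{B_1}(t_{\cdot})_{B_2}\bigl(Dv_2(x_*)\cdot v_1(x_*)-Dv_1(x_*)\cdot v_2(x_*)\bigr)$ plus a remainder that is $o(t_1\cdots t_m)$. This is precisely the structure encoded by Definition~\ref{set-bf}: the two factors of the outer bracket use independent evaluation points, hence the principal term lies in $t_1\cdots t_m\, B_\text{set}({\bf g})(x_*)$ after allowing the two arguments (corresponding to the two sub-trees of basic sub-brackets of $B$) to vary independently toward $x_*$.

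The technical heart of the argument is controlling the remainder so that the principal term actually sits in (a neighborhood of) $t_1\cdots t_m\, B_\text{set}({\bf g})(x_*)$. The $C^{B-1,1}$-regularity hypothesis is tailored exactly so that each differentiation operation appearing in $B$ acts on a factor that is Lipschitz in its $(\dime(X_j;B)-1)$-th derivative; this allows applying a fundamental-theorem-of-calculus-plus-remainder argument along each flow in the composition while keeping track of the precise orders in ${\bf t}_1$ and ${\bf t}_2$. When assembling the remainders one uses that for each basic sub-bracket $S_i$, the evaluation points of the corresponding terms lie in $x + O(|{\bf t}|)$ and can be chosen (by Rademacher's theorem applied to $S_i({\bf g})$) to fall within $\textsf{Diff}^{\dime(S_i;B)}(S_i({\bf g}))$ up to an arbitrarily small perturbation, so that the set of limit points of the principal term as ${\bf t}\to {\bf 0}$ and $x\to x_*$ is contained in $B_\text{set}({\bf g})(x_*)$.

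The expected main obstacle is the bookkeeping for the remainder when low regularity (merely $C^{B-1,1}$) is combined with the need for \emph{independent} sequences of evaluation points for the different basic sub-brackets; this is exactly the subtlety that forces the definition of $B_\text{set}$ to use a $d$-tuple of perturbations rather than a single one, and it is the reason the result is delicate enough to warrant the detailed proof given in \cite{Feleqi2017}. A convenient uniformization step at the end packages the resulting error bounds (one per sub-bracket in the inductive tree) into a single modulus $\tilde{\omega}$ depending only on $|{\bf t}|+|x-x_*|$.
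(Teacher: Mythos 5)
Your proposal has a genuine gap at its central step, the inductive composition. The inductive hypothesis (the theorem for $B_1$ and $B_2$) only provides first-order expansions $\Psi_{B_i}^{\bf g}({\bf t}_i)(z)=z+(t_\cdot)_{B_i}v_i(z)+o\big((t_\cdot)_{B_i}\big)$, and remainders of size $o\big((t_\cdot)_{B_1}\big)$ or $o\big((t_\cdot)_{B_2}\big)$ are \emph{not} $o(t_1\cdots t_m)$: already for $B=[X_1,X_2]$, a term of size $t_1^2$ is admissible in the expansion of $e^{t_1g_1}$ but overwhelms $t_1t_2$ when $t_2\ll t_1$. The cancellation of all sub-principal terms in the four-fold composition is a structural fact about the flows and their inverses that cannot be extracted from the sub-flow asymptotics used as black boxes; one needs second-order (in fact, integral-representation) information on each factor, which is exactly where the $C^{B-1,1}$ hypothesis and the bookkeeping of $\dime(X_j;B)$ enter. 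Moreover, your claimed principal term $(t_\cdot)_{B_1}(t_\cdot)_{B_2}\big(Dv_2(x_*)\cdot v_1(x_*)-Dv_1(x_*)\cdot v_2(x_*)\big)$ is not meaningful as written: $v_i$ is a selection of the set-valued map $z\mapsto (B_i)_{\text{set}}({\bf g})(z)$, which is neither single-valued nor differentiable, so $Dv_i$ is undefined; and even replacing $v_i$ by $B_i({\bf g})$ (single-valued a.e.) and differentiating it at a single perturbed point reproduces precisely the ``naive inductive'' bracket that the Introduction rules out (the counterexample cited there from \cite[Section~7]{Rampazzo2007commutators} shows no satisfactory asymptotic formula holds for that object). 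The whole point of Definition~\ref{set-bf} is that the limit points of the expansion of $B({\bf g}^{\bf h})$ must be taken with \emph{independent} perturbations attached to the basic sub-brackets inside a full expansion of the iterated bracket, and showing that the multi-flow increment is approximated by that particular set is the technical content of \cite[Theorem~3.7]{Feleqi2017}; it does not follow by composing the statements for $B_1$ and $B_2$.

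For the record, the paper does not prove Theorem~\ref{as-th} at all: it is quoted, with the case of length $1$ deduced from \cite[Theorem~4.1]{Rampazzo2001} and the case of length greater than $1$ being \cite[Theorem~3.7]{Feleqi2017}. Your length-$1$ argument is essentially fine (note only that $C^{B}$ there means merely continuous, so the flow may be set-valued and no Gronwall estimate is available; one instead bounds $|e^{sg}(x)-x_*|$ by local boundedness of $g$ and uses its modulus of continuity), but the inductive step as proposed would not close.
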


The proof of Theorem~\ref{as-th}, in the case when $\text{\textsf{\LL}}(B)=1$, can be easily deduced from the proof of \cite[Theorem~4.1]{Rampazzo2001} (see also \cite[Lemma~5.5]{Bardi2020}), whereas, in the case when $\text{\textsf{\LL}}(B)>1$, this is \cite[Theorem~3.7]{Feleqi2017}. Before proceeding further, let us fix some more notations.
\begin{definition}
Let $x_*\in\mM$. For any given vector $w\in T_{x_*}\mM$, let us define the linear map $L^w\in\emph{\textsf{Lin}}(\rr\times T_{x_*}\mM,T_{x_*}\mM)$ as follows:
\begin{equation*}
L^w(s,v)\colonequals v+sw~\text{for every}~(s,v)\in\rr\times T_{x_*}\mM.
\end{equation*}
Moreover, for a non-empty set ${\bf W}\subset T_{x_*}\mM$, let us define the {set of linear maps} ${\bf L}^{\bf W}\subset\emph{\textsf{Lin}}(\rr\times T_{x_*}\mM,T_{x_*}\mM)$ as follows:
\begin{equation*}
{\bf L}^{\bf W}\colonequals\{L^w\; : \;w\in {\bf W}\}.
\end{equation*}
\end{definition}

The following result is of crucial use in proving the first claim stated in the main result (see Theorem~\ref{ChowRas}) of this paper:
\begin{theorem}\label{GDQ-lem}
For {given} integers {$r,\mu\ge 0$ and $m\geq 1$} such that {$r\geq\mu+m$}, let $B$ be a formal bracket such that {$\emph{\textsf{Seq}}(B)=X_{\mu+1}\hdots X_{\mu+m}$}, and let ${\bf g}=(g_1,\hdots,g_r)$ be an $r$-tuple of {vector} fields of class $C^{B-1,1}$ (respectively, $C^{B}$, if $\emph{\textsf{\LL}}(B)=1$) on a manifold $\mM$. Then, for every $x_*\in\mM$, the set ${\bf L}^{{B_{\emph{set}}}({\bf g})(x_*)}\subset\emph{\textsf{Lin}}(\rr\times T_{x_*}\mM,T_{x_*}\mM)$ is a GDQ of the mapping $(t,x)\mapsto\Sigma_B^{{\bf g}}(t)(x)$ at $((0,x_*),x_*)$ in the direction of $\rr\times\mM$, where for $x\in\mM$ and $\delta_x>0$ being a sufficiently small real number, the mapping
\begin{equation*}
[-\delta_x,\delta_x]\ni t\mapsto\Sigma_B^{{\bf g}}(t)(x)\in\mM,
\end{equation*}
is defined as follows:
\begin{enumerate}[(i)]
\item{If $\emph{\textsf{\LL}}(B)=1$, i.e., $B=X_j$ for $j=\mu+1$, then for all $t\in [-\delta_x,\delta_x]$, we set
\begin{equation*}
\Sigma_B^{{\bf g}}(t)(x)\colonequals e^{t f_j}(x);
\end{equation*}}
\item{If $\emph{\textsf{\LL}}(B)=m>1$ and $B=[B_1,B_2]$, with $\emph{\textsf{\LL}}(B_1)=m_1$ and $\emph{\textsf{\LL}}(B_2)=m_2$, then for all $t\in [-\delta_x,\delta_x]$, we set
\begin{align*}
&\Sigma_B^{{\bf g}}(t)(x)\colonequals\\
&\begin{cases}
\bigg(\Psi_{B_1}^{{\bf g}}\Big(\underbrace{|t|^{\frac{1}{m}},\hdots,|t|^{\frac{1}{m}}}_{\emph{$m_1$-times}}\Big)\bigg)^{-1}\circ\bigg(\Psi_{B_2}^{{\bf g}}\Big(\underbrace{|t|^{\frac{1}{m}},\hdots,|t|^{\frac{1}{m}}}_{\emph{$m_2$-times}}\Big)\bigg)^{-1}\circ\Psi_{B_1}^{{\bf g}}\Big(\underbrace{|t|^{\frac{1}{m}},\hdots,|t|^{\frac{1}{m}}}_{\emph{$m_1$-times}}\Big)\circ\Psi_{B_2}^{{\bf g}}\Big(\underbrace{|t|^{\frac{1}{m}},\hdots,|t|^{\frac{1}{m}}}_{\emph{$m_2$-times}}\Big)(x),\\\hfill \emph{if $m$ is even and $t<0$},\\
\Psi_B^{{\bf g}}\Big(\underbrace{-|t|^{\frac{1}{m}},\hdots,-|t|^{\frac{1}{m}}}_{\emph{$m$-times}}\Big)(x),\hfill \emph{if $m$ is odd and $t<0$},\hspace{3pt}\\
\Psi_B^{{\bf g}}\Big(\underbrace{t^{\frac{1}{m}},\hdots,t^{\frac{1}{m}}}_{\emph{$m$-times}}\Big)(x),\hfill \emph{if $t\geq 0$}.\hspace{64pt}
\end{cases}
\end{align*}}
\end{enumerate}
\end{theorem}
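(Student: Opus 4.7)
The plan is to derive the GDQ claim from Corollary~\ref{GDQ-c} (or from Theorem~\ref{sucole} in the length-one subcase where the relevant vector field is only continuous and the flow may be set-valued), applied to $F(t,x)\colonequals\Sigma_B^{{\bf g}}(t)(x)$ and the candidate set $\L\colonequals{\bf L}^{B_{\text{set}}({\bf g})(x_*)}\subset\textsf{Lin}(\rr\times T_{x_*}\mM,T_{x_*}\mM)$. First I would verify the ``free'' hypotheses: $\L$ is non-empty, compact and convex by Proposition~\ref{prop-svb}(i) combined with the linearity of the assignment $w\mapsto L^w$; and $F$ is continuous when $\textsf{\LL}(B)>1$ (so that all relevant vector fields are at least locally Lipschitz) or upper semi-continuous with compact values when $\textsf{\LL}(B)=1$, by standard dependence-on-data results for ODEs. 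After passing to local coordinates centred at $x_*$ and using the Euclidean Riemannian distance $d((t,x),(0,x_*))\colonequals\sqrt{t^2+|x-x_*|^2}$, the identity $x_*+L^w(t,x-x_*)=x+tw$ reduces the estimate required by Corollary~\ref{GDQ-c} to
\begin{equation*}
\inf_{w\in B_{\text{set}}({\bf g})(x_*)}\bigl|y-x-t\,w\bigr|\leq d((t,x),(0,x_*))\,\hat{\o}\bigl(d((t,x),(0,x_*))\bigr)\qquad\text{for every }y\in\Sigma_B^{{\bf g}}(t)(x),
\end{equation*}
for some modulus $\hat{\o}$.

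This inequality is precisely what Theorem~\ref{as-th} delivers, provided I check that the product of the times fed into $\Psi_B^{{\bf g}}$ in the definition of $\Sigma_B^{{\bf g}}(t)$ equals $t$ itself. Three cases are immediate: (a) $\textsf{\LL}(B)=1$, where the single time is $t$; (b) $m\colonequals\textsf{\LL}(B)>1$ and $t\geq 0$, where all $m$ times equal $t^{1/m}$, with product $t$; (c) $m>1$ odd and $t<0$, where all times equal $-|t|^{1/m}$, with product $(-|t|^{1/m})^m=-|t|=t$. In each of these cases Theorem~\ref{as-th} yields the bound $|t|\,\tilde{\o}\bigl(\sqrt{m}|t|^{1/m}+|x-x_*|\bigr)$, which is dominated by $d((t,x),(0,x_*))\,\hat{\o}(d((t,x),(0,x_*)))$ upon using $|t|\leq d$ and setting $\hat{\o}(r)\colonequals\tilde{\o}(\sqrt{m}\,r^{1/m}+r)$ (a modulus, because $r\mapsto r^{1/m}$ is continuous at zero).

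The main obstacle is the remaining case, $m>1$ even and $t<0$: the times employed in $\Sigma_B^{{\bf g}}(t)$ are all $+|t|^{1/m}$, whose product is $|t|=-t$, so Theorem~\ref{as-th} applied verbatim produces an asymptotic expansion with the wrong sign. The plan is to exploit the identity
\begin{equation*}
\Sigma_B^{{\bf g}}(t)(x)=\bigl(\Psi_B^{{\bf g}}(|t|^{1/m},\ldots,|t|^{1/m})\bigr)^{-1}(x),
\end{equation*}
which follows by inspection: writing $\Psi_B^{{\bf g}}(s,\ldots,s)=(\Psi_{B_2}^{{\bf g}}(s))^{-1}\circ(\Psi_{B_1}^{{\bf g}}(s))^{-1}\circ\Psi_{B_2}^{{\bf g}}(s)\circ\Psi_{B_1}^{{\bf g}}(s)$ and inverting this composition of four factors reproduces exactly the even-$t<0$ prescription for $\Sigma_B^{{\bf g}}(t)$. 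Theorem~\ref{as-th} applied to the forward map $\Psi_B^{{\bf g}}(|t|^{1/m},\ldots)$ gives, for some $w\in B_{\text{set}}({\bf g})(x_*)$, an expansion of the form $\Psi_B^{{\bf g}}(|t|^{1/m},\ldots)(z)=z+|t|\,w+R(z)$ with $|R(z)|\leq |t|\,\tilde{\o}(\sqrt{m}|t|^{1/m}+|z-x_*|)$. Setting $z\colonequals\Sigma_B^{{\bf g}}(t)(x)$, so that $\Psi_B^{{\bf g}}(|t|^{1/m},\ldots)(z)=x$, one obtains $\Sigma_B^{{\bf g}}(t)(x)-x=-|t|\,w-R(z)=t\,w+O(|t|\tilde{\o}(\cdot))$, which is the desired estimate; controlling $|z-x_*|$ by $|z-x|+|x-x_*|\leq C|t|+|x-x_*|$ via the local Lipschitz continuity of the composed flow keeps the error term within a modulus of $d((t,x),(0,x_*))$. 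With the asymptotic estimate verified uniformly in all parity/sign cases, the chart-independence of $B_{\text{set}}({\bf g})$ (Section~\ref{s-brackets}) lifts the conclusion from $\rr^n$ to $\mM$, and the proof is complete.
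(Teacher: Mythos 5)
Your proposal is correct and follows essentially the same route as the paper: pass to local coordinates, invoke the asymptotic estimate of Theorem~\ref{as-th} for the re-parametrized multi-flow, and conclude via Corollary~\ref{GDQ-c} (with the length-one, possibly set-valued case delegated to Theorem~\ref{sucole}/the earlier single-bracket result, as the paper also does by citation). You are in fact more explicit than the paper on the one point its proof leaves implicit, namely the branch with $m$ even and $t<0$, where the product of the inserted times is $-t$; your resolution via the identity $\Sigma_B^{{\bf g}}(t)=\bigl(\Psi_B^{{\bf g}}(|t|^{\frac{1}{m}},\hdots,|t|^{\frac{1}{m}})\bigr)^{-1}$ and applying the estimate of Theorem~\ref{as-th} at the point $z=\Sigma_B^{{\bf g}}(t)(x)$ is valid.
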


\begin{proof}
Since the conclusion is of local nature, without loss of generality, we can assume that $\mM=\rr^n$ for some integer $n>0$. When $\textsf{\LL}(B)=1$, the statement of Theorem~\ref{GDQ-lem} simplifies to the following:
\begin{enumerate}[$\bullet$]
\item\emph{Let $g$ be a continuous vector field on $\rr^n$. Then, for every $x_*\in\rr^m$, the set ${\bf L}^{\{g(x_*)\}}\subset\emph{\textsf{Lin}}(\rr\times \rr^m,\rr^m)$ is a GDQ of the (possibly set-valued) mapping $(t,x)\mapsto e^{tg}(x)$ at $((0,x_*),x_*)$ in the direction of $\rr\times\rr^m$.}
\end{enumerate}

The proof in this case now follows from \cite[Theorem~4.1]{Rampazzo2001}. Next, assume that $\textsf{\LL}(B)>1$. From Theorem~\ref{as-th}, it follows that there exists a modulus $\bar{\o}$ such that the inequality:
\begin{equation*}
\text{dist}\big(\Sigma_B^{{\bf g}}(t)(x)-x,t B_{\text{set}}({\bf g})(x_*)\big)\leq |t|\bar{\o}\Big(|t|^{\frac{1}{m}}+|x-x_*|\Big)
\end{equation*}
holds for every $(t,x)\in\mathcal{O}_{0}\times\mathcal{O}_{x_*}$, where $\mathcal{O}_{0}\times\mathcal{O}_{x_*}$ is a neighborhood of $(0,x_*)$ in $\rr\times\rr^n$. Now, fix any $(t,x)\in\mathcal{O}_{0}\times (\mathcal{O}_{x_*}\cap\mathfrak{B}_{1}(x_*))$\footnote{For a real number $r>0$ and an integer $n>0$, the notation $\mathfrak{B}_r(\bar{x})$ stands for the open ball in $\rr^n$ of radius $r$ centered at $\bar{x}\in\rr^n$.}. By using the facts that the mapping $\rr_{\geq 0}\ni z\mapsto z^{\frac{1}{m}}\in\rr_{\geq 0}$ \footnote{The set of all non-negative real numbers, is denoted by $\rr_{\geq 0}$.} is concave and the modulus $\bar{\o}$ is an increasing function, we now have the following chain of inequalities:
\begin{align*}
&\text{dist}\big(\Sigma_B^{{\bf g}}(t)(x)-x,t B_{\text{set}}({\bf g})(x_*)\big)\leq |t|\bar{\o}\Big(|t|^{\frac{1}{m}}+|x-x_*|^{\frac{1}{m}}\Big)\leq |t|\bar{\o}\Big(2^{1-\frac{1}{m}}(|t|+|x-x_*|)^{\frac{1}{m}}\Big)\\
&\leq |t|\bar{\o}\Big(2^{1-\frac{1}{m}}(|(t,x-x_*)|+|(t,x-x_*)|)^{\frac{1}{m}}\Big)=|t|\hat{\o}(|(t,x-x_*)|),
\end{align*}
where we have set $\hat{\o}(\cdot)\colonequals\bar{\o}\circ 2(\cdot)^{\frac{1}{m}}$. Hence, the following inequality:
\begin{equation*}
\inf\left\{\left|\Sigma_B^{{\bf g}}(t)(x)-\Sigma_B^{{\bf g}}(0)(x_*)-L(t,x-x_*)\right|\; :\; L\in{\bf L}^{B_{\text{set}}({\bf g})(x_*)}\right\}\leq |(t,x-x_*)|\hat{\o}(|(t,x-x_*)|)
\end{equation*}
holds for every $(t,x)\in\mathcal{O}_{0}\times (\mathcal{O}_{x_*}\cap\mathfrak{B}_{1}(x_*))$.
By invoking Corollary~\ref{GDQ-c}, we can now conclude that ${\bf L}^{{B_{\text{set}}}({\bf g})(x_*)}$ is a GDQ of $\Sigma_B^{{\bf g}}$ at $((0,x_*),x_*)$ in the direction of $\rr\times\rr^m$.
\end{proof}

\section{The Main Result}\label{s-Chow}
\subsection{A Deterministic Version of the $L^\infty$ Rashevskii-Chow Theorem}
We begin by establishing the main result of this paper under the following \emph{uniqueness hypothesis} {\bf (UH)}, which is verified, e.g., when the vector fields $\{f_1,\hdots,f_\nu\}$ are locally Lipschitz continuous.
\begin{equation*}
{\bf (UH)}~\left\lbrace
\begin{tabular}{@{}p{400pt}}
Let $K$ be a compact subset of $\mM$. Then, there exists $t>0$ such that, for each $j\in\{1,\hdots,\nu\}$ and for every $\bar{x}\in K$, the solution of the Cauchy problem: $\dot{x}=f_j(x),~x(0)=\bar{x}$ exists uniquely on the interval $[0,t]$.
\end{tabular}\right.
\end{equation*}

In order to state the main result of this paper --namely, Theorem~\ref{ChowRas} below-- we need the following definitions:
\begin{definition}[Reachable set]
Let $x_*\in \mM$ and let $t\ge0$. The \emph{reachable set from $x_*$ up to time $t$}, is defined as follows:
\begin{equation*}
\emph{\textsf{Reach}}(t,x_*)\colonequals\bigcup_{0\leq s\leq t}\{x_{x_*,u}(s)\; : \;{u}\colon [0,s]\to\rr^\nu~\emph{is an admissible control}\},
\end{equation*}
where $x_{x_*,u}\colon [0,t]\to\mM$ is the unique solution of \eqref{sistemaintro1} starting from $x(0)=x_*$ and corresponding to the control $u$.\footnote{Recall from the Introduction (see Section~\ref{s-intro}) that, for every time $t\ge 0$, an \emph{admissible control} $u=(u_1,\hdots,u_\nu)\colon [0,t]\to\rr^\nu$ is a piecewise constant map that takes values in the set of standard basis vectors $\{\pm {\bf e}_1,\hdots,\pm {\bf e}_\nu\}$ of $\rr^\nu$.}
\end{definition}

\begin{definition}[Small-time local controllability]
Let $x_*\in \mM$. Then, the control system \eqref{sistemaintro1} is said to be \emph{small}-\emph{time locally controllable} from $x_*$, if for {every} $t>0$, the reachable set $\emph{\textsf{Reach}}(t,x_*)$ is a neighborhood of $x_*$.
\end{definition}

\begin{definition}[Minimum-Time function]
Let $x_*\in \mM$. The \emph{minimum-time function} $T\colon\mM \to [0,+\infty]$ (from $x_*$), is defined, for every $x\in\mM$ as follows:
\begin{align*}
T(x)&\colonequals\inf\{t\geq 0\; : \;x_{x_*,u}(t)=x~\emph{for some admissible control}~u\colon [0,t]\to\rr^\nu\}\\
&=\inf\{t\geq 0\; : \;x\in\emph{\textsf{Reach}}(t,x_*)\}.\footnotemark
\end{align*}
\end{definition}
\footnotetext{One has $T(x)=+\infty$ for every $x\notin\bigcup_{t \ge 0}\text{\textsf{Reach}}(t,x_*)$.}

\begin{definition}[$L^\infty$ bracket-generating condition]\label{brackgen}
Let $\ell\geq 1$ be a given integer and let $B_1,\hdots,B_\ell$ be formal {iterated} brackets. For a given integer $\nu\geq 1$, we say that a finite family $\{g_1,\hdots,g_\nu\}$ of vector fields on a manifold $\mM$ is $L^\infty$ \emph{bracket-generating} at a point $x\in \mM$ with respect to the formal brackets $B_1,\hdots,B_\ell$, if there exists an integer $r\geq 1$, with $r\leq\nu$, such that the $r$-tuple ${\bf\hat g}=(g_1,\hdots,g_r)$ verifies the following conditions:
\begin{enumerate}[(i)]
\item{For every $j\in\{1,\hdots,{\ell}\}$, one has
\begin{equation*}
{\bf\hat g}\in\begin{cases}
C^{B_j}, &\emph{if \textsf{\LL}}(B_j)=1,\\
C^{B_j-1,1}, &\emph{if \textsf{\LL}}(B_j)>1;
\end{cases}
\end{equation*}}
\item{For {every} {$\ell$-tuple} $(v_1,\hdots,v_\ell)\in {(B_1)}_\emph{set}({\bf\hat g})({x})\times\cdots\times {(B_\ell)}_\emph{set}({\bf\hat g})({x})$, one has
\begin{equation*}
\emph{span}\left\{v_1,\hdots,v_\ell\right\}=T_x\mM.
\end{equation*}}
\end{enumerate}
\end{definition}

\begin{remark}
In item $(i)$ of Definition~\ref{brackgen}, if we assume instead that the {$r$-tuple $(g_1,\hdots,g_r)$ of vector fields} is {of class $C^{B_j}$} for every $j\in\{1,\hdots,\ell\}$, then we obtain a hypothesis which is still a bit weaker than the one used in \cite{Bramanti2013}. For instance, {if the formal bracket $[[X_1,X_2],[X_3,X_4]]$ were one of the formal iterated brackets $B_1,\hdots,B_\ell$}, then our hypothesis would require that the vector fields $\{f_1,f_2,f_3,f_4\}$ be of class $C^2$, while the hypothesis used in \cite{Bramanti2013} (and called ``non-smooth'' therein) would require these to be at least of class $C^3$.
\end{remark}

We are now ready to state the main result:
\begin{theorem}[A deterministic $L^\infty$ Rashevskii-Chow theorem]\label{ChowRas} 
Let $\ell\geq 1$ be a given integer and let $B_1,\hdots,B_\ell$ be formal iterated brackets. For a given integer $\nu\geq 1$, if the family $\{f_1,\hdots,f_\nu\}$ of continuous vector fields on a manifold $\mM$ verifies the uniqueness hypothesis {\bf (UH)} and is also $L^\infty$ bracket-generating at $x_*\in \mM$ with respect to the formal brackets $B_1,\hdots,B_\ell$, {then} the control system \eqref{sistemaintro1} is STLC from $x_*$. Moreover, the minimum-time function $T\colon\mathcal{O}\to [0,+\infty]$ verifies $T(x)\leq C\,d(x,x_*)^{1/r}$ for all $x$ in some neighborhood $\mathcal{O}$ of $x_*$ (endowed with a Riemannian distance $d\colon\mathcal{O}\times\mathcal{O}\to\rr$), with $C>0$ being a suitable constant and ${r}\colonequals\max\left\{\emph{\textsf{\LL}}(B_j)\; : \;j\in\{1,\hdots,\ell\}\right\}$.
\end{theorem}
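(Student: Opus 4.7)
The plan is to reduce the statement to a direct application of the open mapping theorem for GDQs (Theorem~\ref{open-t}), using the iterated maps $\Sigma_{B_j}^{{\bf g}}$ as building blocks glued together by the chain rule (Theorem~\ref{chain}). Let ${\bf g}=(f_1,\hdots,f_r)$ be the $r$-tuple witnessing the $L^\infty$ bracket-generating condition and set $W_j\colonequals (B_j)_{\text{set}}({\bf g})(x_*)$ for $j=1,\hdots,\ell$. For a parameter vector $\mathbf{t}=(t_1,\hdots,t_\ell)\in\rr^\ell$ of sufficiently small norm, define
\begin{equation*}
\Phi(\mathbf{t})\colonequals \Sigma_{B_1}^{{\bf g}}(t_1)\circ\cdots\circ\Sigma_{B_\ell}^{{\bf g}}(t_\ell)(x_*).
\end{equation*}
Unraveling each factor $\Sigma_{B_j}^{{\bf g}}(t_j)$ into its multi-flow $\Psi_{B_j}^{{\bf g}}$ shows that $\Phi(\mathbf{t})$ is reached from $x_*$ by an admissible trajectory of~\eqref{sistemaintro1} in which every elementary flow $e^{\pm s f_i}$ carries a time parameter of magnitude $|t_j|^{1/\textsf{\LL}(B_j)}$. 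Moreover, {\bf (UH)} guarantees that $\Phi$ is single-valued on a neighborhood of ${\bf 0}$, so its reachable image is well-defined.

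By Theorem~\ref{GDQ-lem}, ${\bf L}^{W_j}$ is a GDQ of the map $(t,x)\mapsto\Sigma_{B_j}^{{\bf g}}(t)(x)$ at $((0,x_*),x_*)$ in the direction of $\rr\times\mM$. Setting $H_{\ell+1}\equiv x_*$ and $H_j(t_j,\hdots,t_\ell)\colonequals\Sigma_{B_j}^{{\bf g}}(t_j)(H_{j+1}(t_{j+1},\hdots,t_\ell))$ for $j=\ell,\hdots,1$, an inductive application of Theorem~\ref{chain} (in its non-directional form, whose ``retract'' hypothesis is automatic) shows that the GDQ of $H_j$ at $({\bf 0},x_*)$ is the set of linear maps $(t_j,\hdots,t_\ell)\mapsto\sum_{k\geq j}t_k w_k$ with $w_k\in W_k$. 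Taking $j=1$ yields the GDQ
\begin{equation*}
\Lambda\colonequals\left\{(t_1,\hdots,t_\ell)\mapsto\sum_{j=1}^\ell t_jw_j\;:\;(w_1,\hdots,w_\ell)\in W_1\times\cdots\times W_\ell\right\}\subset\textsf{Lin}(\rr^\ell,T_{x_*}\mM)
\end{equation*}
of $\Phi$ at $({\bf 0},x_*)$ in the direction of $\rr^\ell$. Condition $(ii)$ of Definition~\ref{brackgen} translates exactly into the assertion that every element of $\Lambda$ is a surjective linear map from $\rr^\ell$ to $T_{x_*}\mM$, so the hypotheses of Theorem~\ref{open-t} are in force.

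Theorem~\ref{open-t} then yields constants $\bve,\kappa>0$ such that any $x$ with $d(x,x_*)\leq\ve\leq\bve$ equals $\Phi(\mathbf{t})$ for some $\mathbf{t}$ with $|\mathbf{t}|\leq\kappa\ve$; this immediately implies STLC from $x_*$. For the quantitative time bound, note that $\Sigma_{B_j}^{{\bf g}}(t_j)$ is realized by concatenating a bounded number $N_{B_j}$ of elementary flows, each with time parameter of magnitude $|t_j|^{1/\textsf{\LL}(B_j)}$; hence the total physical time needed to reach $\Phi(\mathbf{t})$ from $x_*$ is at most $\sum_{j=1}^\ell N_{B_j}\,|t_j|^{1/\textsf{\LL}(B_j)}\leq C\,(\kappa\ve)^{1/r}$, since $\textsf{\LL}(B_j)\leq r$ and $|t_j|\leq 1$. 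This delivers $T(x)\leq C'\,d(x,x_*)^{1/r}$. The main technical obstacle is the inductive chain-rule step: one has to verify that the compact neighborhoods, CCA representatives and image inclusions demanded by Definition~\ref{GDQ-def} mesh consistently, and that the claimed product form of the resulting GDQ genuinely arises from composing ${\bf L}^{W_j}$ with the trivial ``insert $t_j$'' coordinate-extension map at each stage. A secondary but routine point is the passage from $|\mathbf{t}|\leq\kappa\ve$ to the physical time bound, which only requires unraveling Definition~\ref{commutator}.
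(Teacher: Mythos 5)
Your proposal is correct and follows essentially the same route as the paper's proof: compose the re-parametrized multi-flow maps $\Sigma_{B_j}^{\bf g}(t_j)$, use Theorem~\ref{GDQ-lem} plus the chain rule (Theorem~\ref{chain}) to identify the product set $\Lambda$ of maps $\mathbf{t}\mapsto\sum_j t_j w_j$ as a GDQ, invoke surjectivity from the bracket-generating condition together with the open mapping theorem (Theorem~\ref{open-t}) for STLC, and then convert the linear-rate bound $|\mathbf{t}|\leq\kappa\ve$ into the time estimate via $\sum_j N_{B_j}|t_j|^{1/\textsf{\LL}(B_j)}\leq C|\mathbf{t}|^{1/r}$. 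The only differences from the paper are cosmetic (order of composition, notation $n(B)$ versus $N_{B}$), and the chain-rule bookkeeping you flag as the main obstacle is treated at the same level of detail in the paper itself.
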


\begin{remark}\label{literature}
{As we have already pointed out in Section~\ref{s-intro}, the interest in STLC goes far beyond than just the realm of control theory. See, e.g., \cite{Montgomery2002,Agrachev2019} and the references therein, for the use of the bracket-generating condition in \emph{Sub}-\emph{Riemannian geometry}, where the trajectories of $\eqref{sistemaintro1}$ are called \emph{horizontal curves}. For the use of the bracket-generating condition in the theory of \emph{degenerate elliptic} and \emph{parabolic partial differential equations}, see, e.g., \cite{Hormander1967,Bony1969,Bardi1999,Bramanti2013,Bramanti2014} and the references therein. For the use of the bracket-generating condition in the study of \emph{eikonal equations}, see, e.g., \cite{Bardi2020} and the references therein.}
\end{remark}

Before proving Theorem~\ref{ChowRas}, let us illustrate its application to the following example:
\begin{example}\label{esempio}
Consider the following control system on $\rr^4$:
\begin{equation}\label{sistemaesempio}
\dot x=\sum_{i=1}^3 u_if_i(x),
\end{equation}
with control constraints $|u_i|\leq 1$, $i=1,2,3$ and the vector fields (in the standard coordinates on $\rr^4$):
\begin{equation*}
f_1(x)\colonequals\frac{\partial}{\partial x_2}+\frac{\partial}{\partial x_4},~f_2(x)\colonequals\frac{\partial}{\partial x_1} +(2x_2^2+ x_2|x_2|)\frac{\partial}{\partial x_3},~f_3(x)\colonequals\phi(x_2)\frac{\partial}{\partial x_4},\footnote{As is customary, for any choice of local coordinates $(x_1,\hdots,x_n)$ around a point $P$ of a $n$-dimensional manifold $\mM$, $\left\{\frac{\partial}{\partial x_1},\hdots,\frac{\partial}{\partial x_n}\right\}$ denotes the standard basis of the tangent space $T_{P}\mM$.}
\end{equation*}
where {$\phi: \rr\to\rr$ is a nowhere vanishing continuous function.}
A simple computation yields:
\begin{equation*}
[f_1,f_2](x)=(4x_2+2|x_2|)\frac{\partial}{\partial x_3}.
\end{equation*}
Hence, at all the points of the hyperplane $x_2=0$, the family $\{f_1,f_2,f_3\}$ of vector fields fails to be bracket-generating of step $2$. However, one can verify that
\begin{equation*}
[f_1,[f_1,f_2]]_{\text{set}}(x)=\left\{\alpha\frac{\partial}{\partial x_3}\; : \;\alpha\in [2,6]\right\},
\end{equation*}
which results in
\begin{equation*}
\text{span}\left\{f_1(x),f_2(x),f_3(x),[f_1,f_2](x),v\right\}=\rr^4
\end{equation*}
for all $v\in [f_1,[f_1,f_2]]_{\text{set}}(x)$ and $x\in\rr^4$. Therefore, the family $\{f_1,f_2,f_3\}$ of vector fields is $L^\infty$ bracket-generating at every point of $\rr^4$ with respect to the formal brackets $B_1\colonequals X_1,B_2\colonequals X_2,B_3\colonequals X_3,B_4\colonequals [X_1,X_2],B_5\colonequals [X_1,[X_1,X_2]]$. Using Theorem~\ref{ChowRas}, we can now conclude that the control system \eqref{sistemaesempio} is STLC from every point of $\rr^4$. In addition, for any choice of a point $x_*\in\rr^{4}$, the minimum-time function $T: \mathcal{O}_{x_*}\to [0,+\infty]$, verifies $T(x)\leq C_{x_*}|x-x_*|^{1/3}$ for all $x$ in some neighborhood $\mathcal{O}_{x_*}$ of $x_*$, with $C_{x_*}>0$ being a suitable constant.
\end{example}

\begin{proof}[Proof of Theorem~\ref{ChowRas}]
Since the conclusion is of local nature, without loss of generality, we can assume that $\mM=\rr^n$ for some integer $n>0$. Furthermore, we can also assume that $d(x_1,x_2)\colonequals |x_1-x_2|$ for every $x_1,x_2\in\rr^n$. To prove the claim regarding the small-time local controllability of the control system \eqref{sistemaintro1} from $x_*$, let us begin by observing that ${\bf\hat g}$ in Definition~\ref{brackgen} here coincides with the $r$-tuple of vector fields $(f_1,\hdots,f_r)$. Fixing the same notations as given in Definition~\ref{commutator}, let us now recall from Theorem~\ref{GDQ-lem} the definition of the mapping
\begin{equation*}
[-\delta_x,\delta_x]\ni t\mapsto\Sigma_B^{{\bf\hat g}}(t)(x)\in\rr^n,
\end{equation*}
which for $x\in\rr^n$ and $\delta_x>0$ being a sufficiently small real number, is given as follows:
\begin{enumerate}[(i)]
\item{If $\text{\textsf{\LL}}(B)=1$, i.e., $B=X_j$ for some $j=\mu+1$, then for all $t\in [-\delta_x,\delta_x]$, we set
\begin{equation*}
\Sigma_B^{{\bf\hat g}}(t)(x)\colonequals e^{t f_j}(x);
\end{equation*}}
\item{If $\text{\textsf{\LL}}(B)=m>1$ and $B=[B_1,B_2]$, with $\text{\textsf{\LL}}(B_1)=m_1$ and $\text{\textsf{\LL}}(B_2)=m_2$, then for all $t\in [-\delta_x,\delta_x]$, we set
\begin{align*}
&\Sigma_B^{{\bf\hat g}}(t)(x)\colonequals\\
&\begin{cases}
\bigg(\Psi_{B_1}^{{\bf\hat g}}\Big(\underbrace{|t|^{\frac{1}{m}},\hdots,|t|^{\frac{1}{m}}}_{\text{$m_1$-times}}\Big)\bigg)^{-1}\circ\bigg(\Psi_{B_2}^{{\bf\hat g}}\Big(\underbrace{|t|^{\frac{1}{m}},\hdots,|t|^{\frac{1}{m}}}_{\text{$m_2$-times}}\Big)\bigg)^{-1}\circ\Psi_{B_1}^{{\bf\hat g}}\Big(\underbrace{|t|^{\frac{1}{m}},\hdots,|t|^{\frac{1}{m}}}_{\text{$m_1$-times}}\Big)\circ\Psi_{B_2}^{{\bf\hat g}}\Big(\underbrace{|t|^{\frac{1}{m}},\hdots,|t|^{\frac{1}{m}}}_{\text{$m_2$-times}}\Big)(x),\\\hfill \text{if $m$ is even and $t<0$},\\
\Psi_B^{{\bf\hat g}}\Big(\underbrace{-|t|^{\frac{1}{m}},\hdots,-|t|^{\frac{1}{m}}}_{\text{$m$-times}}\Big)(x),\hfill \text{if $m$ is odd and $t<0$},\hspace{3pt}\\
\Psi_B^{{\bf\hat g}}\Big(\underbrace{t^{\frac{1}{m}},\hdots,t^{\frac{1}{m}}}_{\text{$m$-times}}\Big)(x),\hfill \text{if $t\geq 0$}.\hspace{64pt}
\end{cases}
\end{align*}}
\end{enumerate}

For a sufficiently small real number $\delta>0$, let us now define the map ${\bf x}^{(B_1({\bf\hat g}),\hdots,B_\ell({\bf\hat g}))}\colon\mathfrak{B}_{\delta}({\bf 0})\to \rr^n$, by setting
\begin{equation*}
{\bf x}^{(B_1({\bf\hat g}),\hdots,B_\ell({\bf\hat g}))}(t_1,\hdots,t_\ell)\colonequals\Sigma_{B_\ell}^{{\bf\hat g}}(t_\ell)\circ\cdots\circ\Sigma_{B_1}^{{\bf\hat g}}(t_1)(x_*)
\end{equation*}
for every $(t_1,\hdots,t_\ell)\in\mathfrak{B}_{\delta}({\bf 0})$, where ${\bf 0}$ denotes the zero vector in $\rr^\ell$. Let us also now define the subset $\mathbf{A}^{(B_1({\bf\hat g}),\hdots,B_\ell({\bf\hat g}))}\subset\textsf{Lin}(\rr^\ell,\rr^m)$ as follows:
\begin{equation*}
\mathbf{A}^{(B_1({\bf\hat g}),\hdots,B_\ell({\bf\hat g}))}\colonequals\{A^{(v_1,\hdots,v_\ell)}\; : \;v_j\in (B_j)_\text{set}({\bf\hat g})(x_*),~j\in\{1,\hdots,\ell\}\},
\end{equation*}
where for any $(v_1,\hdots,v_\ell)\in\rr^m\times\cdots\times\rr^m$, the linear map $A^{(v_1,\hdots,v_\ell)}\in\textsf{Lin}(\rr^\ell,\rr^m)$, is defined as follows:
\begin{equation*}
A^{(v_1,\hdots,v_\ell)}(t_1,\hdots,t_\ell)\colonequals\sum_{i=1}^{\ell}L^{v_i}(t_i,{\bf 0})=\sum_{i=1}^{\ell}t_iv_i
\end{equation*}
for every $(t_1,\hdots,t_\ell)\in\rr^\ell$. By using the chain rule (see Theorem~\ref{chain}), together with Theorem~\ref{GDQ-lem}, we can now deduce that ${\bf A}^{(B_1({\bf\hat g}),\hdots,B_\ell({\bf\hat g}))}$ is a {GDQ} of ${\bf x}^{(B_1({\bf\hat g}),\hdots,B_\ell({\bf\hat g}))}$ at $({\bf 0},{\bf x}^{(B_1({\bf\hat g}),\hdots,B_\ell({\bf\hat g}))}({\bf 0}))=({\bf 0},{x_*})$ in the direction of $\rr^\ell$. By hypothesis, any linear map $A^{(v_1,\hdots,v_\ell)}\in\mathbf{A}^{(B_1({\bf\hat g}),\hdots,B_\ell({\bf\hat g}))}$ is surjective, i.e., $A^{(v_1,\hdots,v_\ell)}(\rr^\ell)=\rr^m$. In particular, this means that the map ${\bf x}^{(B_1({\bf\hat g}),\hdots,B_\ell({\bf\hat g}))}$ satisfies all the assumptions of the open mapping theorem (see Theorem~\ref{open-t}), using which we can conclude that the control system \eqref{sistemaintro1} is STLC from $x_*$.

To prove the claim regarding the H{\"o}lder estimate of the minimum-time function at $x_*$, let us begin by observing that, from the ``linear rate'' property stated in Theorem~\ref{open-t}, there exist constants $C_1>0$ and $\delta_1>0$ such that
\begin{equation}\label{reach-1}
\mathfrak{B}_{C_1\theta}(x_*)\subset{\bf x}^{(B_1({\bf\hat g}),\hdots,B_\ell({\bf\hat g}))}(\mathfrak{B}_\theta({\bf 0}))
\end{equation}
for all $\theta\in (0,\delta_1]$. For any formal bracket $B$, let us now define the natural number $n(B)$, recursively as follows:
\begin{equation*}
n(B)\colonequals\begin{cases}
1, &\text{if $\text{\textsf{\LL}}(B)=1$},\\
2n({\invbreve B})+2n({\breve B}), &\text{if $\text{\textsf{\LL}}(B)>1$}~\text{and}~\text{$B=[{\invbreve B},{\breve B}]$}.
\end{cases}
\end{equation*}
Note that, for a sufficiently small real number $\delta>0$, if we define the function $\tau^{(B_1({\bf\hat g}),\hdots,B_\ell({\bf\hat g}))}\colon\mathfrak{B}_{\delta}({\bf 0})\to\rr$, by setting
\begin{equation*}
\tau^{(B_1({\bf\hat g}),\hdots,B_\ell({\bf\hat g}))}(t_1,\hdots,t_\ell) \colonequals\sum_{i=1}^\ell n({B_i})|t_i|^{\frac{1}{\text{\textsf{\LL}}(B_i)}}
\end{equation*}
for all $(t_1,\hdots,t_\ell)\in\mathfrak{B}_\delta({\bf 0})$, then ${\bf x}^{(B_1({\bf\hat g}),\hdots,B_\ell({\bf\hat g}))}(t_1,\hdots,t_\ell)$ is nothing but the value at time $\tau^{(B_1({\bf\hat g}),\hdots,B_\ell({\bf\hat g}))}(t_1,$ $\hdots,t_\ell)$ of a solution to \eqref{sistemaintro1}, starting from $x_*$ at time $0$ and corresponding to a suitable admissible control $(u_1,\hdots,u_\nu)\colon [0,\bar\tau]\to\rr^\nu$, with $\bar\tau=\tau^{(B_1({\bf\hat g}),\hdots,B_\ell({\bf\hat g}))}(t_1,\hdots,t_\ell)$. Keeping this fact in mind and noting that the inequality:
\begin{equation*}
\tau^{(B_1({\bf\hat g}),\hdots,B_\ell({\bf\hat g}))}(t_1,\hdots,t_\ell)\leq\bar C\left(\max\left\{|t_j|^{\frac{1}{r}}\; : \;j\in\{1,\hdots,\ell\}\right\}\right)
\end{equation*}
holds for every $(t_1,\hdots,t_\ell)\in [0,1]\times\cdots\times [0,1]$, where the constant $\bar C\colonequals\big(\sum_{i=1}^\ell n({B_i})\big)\geq 1$, we can now deduce that there exist some constants $C_2>0$ and $\delta_2\in (0,\delta_1]$ such that
\begin{equation}\label{reach-2}
{\bf x}^{(B_1({\bf\hat g}),\hdots,B_\ell({\bf\hat g}))}(\mathfrak{B}_\theta({\bf 0}))\subset\textsf{Reach}\big(C_2\theta^{\frac{1}{r}},x_*\big)
\end{equation}
for every $\theta\in (0,\delta_2]$.
From \eqref{reach-1} and \eqref{reach-2}, we have that
\begin{equation*}
\mathfrak{B}_{C_3\theta}(x_*)\subset\textsf{Reach}\big(\theta^{\frac{1}{r}},x_*\big)
\end{equation*}
for every $\theta\in (0,\delta_2]$, where $C_3>0$ is a suitable constant. {Hence, for any $\theta\in (0,\delta_2]$ and $x\in\mathfrak{B}_{C_3\theta}(x_*)$, we have that $x\in\textsf{Reach}\big(\theta^{\frac{1}{r}},x_*\big)$, and as a result, the following inequality:
\begin{equation*}
T(x)\leq\theta^{\frac{1}{r}}\leq C_3^{-\frac{1}{r}}|x-x_*|^{1/r}
\end{equation*}
holds for every $x\in\mathfrak{B}_{C_3\theta}(x_*)$, which completes the proof.}
\end{proof}

\subsection{A Non-Deterministic Version of the $L^\infty$ Rashevskii-Chow Theorem}
We now establish a result that generalizes Theorem~\ref{ChowRas}, in that the uniqueness hypothesis {\bf (UH)} is not assumed. In such a case, the notions of a ``reachable set'', ``small-time local controllability'' and a ``minimum-time function'', clearly fail to have any deterministic meaning. However, a ``non-deterministic version'' of Theorem~\ref{ChowRas} (see Theorem~\ref{wChowRas} below) can still be established. In order to do so, let us introduce a weak version of a reachable set, small-time local controllability and a minimum-time function.

\begin{definition}[Weak reachable set]
Let $x_*\in \mM$ and let $t\ge0$. The set
\begin{equation*}
\emph{\textsf{wReach}}(t,x_*)\colonequals\bigcup_{0\leq s\leq t}\{{\mathtt x}_{x_*,u}(s)\; : \;{u}\colon [0,s]\to\rr^\nu~\emph{is an admissible control}\},
\end{equation*}
where the (possibly set-valued) map ${\mathtt x}_{x_*,u}\colon [0,t]\to\mM$, is defined by
\begin{equation*}
{\mathtt x}_{x_*,u}(s)\colonequals\Big\{x_{x_*,u}(s)\; : \;x_{x_*,u}(\cdot)~\emph{is a solution of}~\eqref{sistemaintro1}~\emph{s.t.}~x(0)=x_*,~\emph{corresponding to an admissible control}~u\Big\},\footnotemark
\end{equation*}
is called the \emph{weak reachable set from $x_*$ up to time $t$.}
\end{definition}
\footnotetext{Notice that, since the vector fields $\{f_1,\hdots,f_\nu\}$ in \eqref{sistemaintro1} are continuous, there exists a real number $\bar{t}\in (0,t]$ such that the set ${\mathtt x}_{x_*,u}(s)\neq\{\emptyset\}$ for every $s\in [0,\bar{t}]$ and every admissible control $u$.}

\begin{definition}[Weak small-time local controllability]
Let $x_*\in \mM$. The control system \eqref{sistemaintro1} is said to be \emph{weakly small}-\emph{time locally controllable} from $x_*$, if for {every} $t>0$, the weak reachable set $\emph{\textsf{wReach}}(t,x_*)$ is a neighborhood of $x_*$.
\end{definition}

\begin{definition}[Weak minimum-time function]
Let $x_*\in \mM$. The \emph{weak minimum-time function} $T_w\colon\mM\to [0,+\infty]$ (from $x_*$), is defined, for every $x\in\mM$ as follows:
\begin{align*}
T_w(x)&\colonequals\inf\{t\geq 0\; : \;{\mathtt x}_{x_*,u}(t)\ni x~\emph{for some admissible control}~{u}\colon [0,t]\to\rr^\nu\}\\
&=\inf\{t\geq 0\; : \;x\in\emph{\textsf{wReach}}(t,x_*)\}.\footnotemark
\end{align*}
\end{definition}
\footnotetext{One has $T_w(x)=+\infty$ for every $x\notin\bigcup_{t\ge0}\text{\textsf{wReach}}(t,x_*)$.}

We are now ready to state the following result:
\begin{theorem}[A non-deterministic version of the $L^\infty$ Rashevskii-Chow theorem]\label{wChowRas}
Let $\ell\geq 1$ be a given integer and let $B_1,\hdots,B_\ell$ be formal iterated brackets. For a given integer $\nu\geq 1$, if the family $\{f_1,\hdots,f_\nu\}$ of continuous vector fields on a manifold $\mM$ is $L^\infty$ bracket-generating at $x_*\in \mM$ with respect to the formal brackets $B_1,\hdots,B_\ell$, {then} the control system \eqref{sistemaintro1} is weakly STLC from $x_*$. Moreover, the weak minimum-time function $T_w\colon\mathcal{O}\to [0,+\infty]$, verifies $T_w(x)\leq C\,d(x,x_*)^{1/r}$ for all $x$ in some neighborhood $\mathcal{O}$ of $x_*$ (endowed with a Riemannian distance $d\colon \mathcal{O}\times\mathcal{O}\to\rr$), with $C>0$ being a suitable constant and ${r}\colonequals\max\left\{\emph{\textsf{\LL}}(B_j)\; : \;j\in\{1,\hdots,\ell\}\right\}$.
\end{theorem}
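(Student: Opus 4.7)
The plan is to follow the proof of Theorem~\ref{ChowRas} almost verbatim, re-reading every composition of flows as a composition of \emph{set-valued} maps and replacing $\textsf{Reach}$ by $\textsf{wReach}$ only at the very last step. All of the tools invoked in that proof are already formulated in the set-valued framework: Theorem~\ref{GDQ-lem} explicitly allows $(t,x)\mapsto \Sigma_B^{\mathbf g}(t)(x)$ to be multi-valued (precisely when $\text{\textsf{\LL}}(B)=1$ and $B$ picks out a merely continuous $f_j$ whose Cauchy problem lacks uniqueness); the chain rule for GDQs (Theorem~\ref{chain}) is stated for arbitrary set-valued maps; and the open mapping theorem (Theorem~\ref{open-t}) delivers its linear-rate conclusion in the set-valued setting as well.

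First I would form, exactly as in the deterministic proof, the (now possibly set-valued) map
\begin{equation*}
\mathbf x^{(B_1(\hat{\mathbf g}),\ldots,B_\ell(\hat{\mathbf g}))}(t_1,\ldots,t_\ell)\colonequals \Sigma_{B_\ell}^{\hat{\mathbf g}}(t_\ell)\circ\cdots\circ\Sigma_{B_1}^{\hat{\mathbf g}}(t_1)(x_*),
\end{equation*}
understood as a set-valued composition, and iterate the (non-directional) chain rule, invoking Theorem~\ref{GDQ-lem} at every stage, to show that $\mathbf A^{(B_1(\hat{\mathbf g}),\ldots,B_\ell(\hat{\mathbf g}))}$ is a GDQ of $\mathbf x^{(B_1(\hat{\mathbf g}),\ldots,B_\ell(\hat{\mathbf g}))}$ at $(\mathbf 0, x_*)$ in the direction of $\rr^\ell$. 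Under the $L^\infty$ bracket-generating hypothesis, every linear map in this GDQ is surjective onto $T_{x_*}\mM$, so Theorem~\ref{open-t} yields constants $C_1,\delta_1>0$ such that $\mathfrak B_{C_1\theta}(x_*)\subset \mathbf x^{(B_1(\hat{\mathbf g}),\ldots,B_\ell(\hat{\mathbf g}))}(\mathfrak B_\theta(\mathbf 0))$ for every $\theta\in(0,\delta_1]$. The same reparametrization $\tau^{(B_1(\hat{\mathbf g}),\ldots,B_\ell(\hat{\mathbf g}))}$ as in the proof of Theorem~\ref{ChowRas} still satisfies $\tau\le \bar C\,\theta^{1/r}$ on $\mathfrak B_\theta(\mathbf 0)$. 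Unwinding the definition of the set-valued composition, any $y\in \mathbf x^{(B_1(\hat{\mathbf g}),\ldots,B_\ell(\hat{\mathbf g}))}(\mathbf t)$ is realized by selecting compatible intermediate states at the successive boundary instants of the concatenation, and is therefore the endpoint of \emph{some} admissible trajectory of \eqref{sistemaintro1} issuing from $x_*$ and defined on $[0,\tau]$. Hence $y\in \textsf{wReach}(\bar C\theta^{1/r},x_*)$, and the resulting inclusion $\mathfrak B_{C_1\theta}(x_*)\subset \textsf{wReach}(\bar C\theta^{1/r},x_*)$ delivers both weak STLC and the H{\"o}lder estimate $T_w(x)\le C\,d(x,x_*)^{1/r}$.

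The only place where the argument genuinely goes beyond that of Theorem~\ref{ChowRas}, and thus the main obstacle to pin down, is the verification that each length-one set-valued factor $(t,x)\mapsto e^{tf_j}(x)$ is CCA on a suitable compact neighborhood, so that the chain rule is applicable to it. This is already implicit in the length-one case of Theorem~\ref{GDQ-lem}, whose proof in \cite{Rampazzo2001} uses compactness and connectedness (Kneser funnel) of the solution set of $\dot x=f_j(x)$, $x(0)=\bar x$, together with upper semicontinuous dependence on $\bar x$, to extract continuous single-valued approximations. With this CCA property in hand, the entire structure of the deterministic argument transfers without further change.
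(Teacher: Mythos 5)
Your proposal is correct and follows essentially the same route as the paper, whose own proof is simply the observation that the argument for Theorem~\ref{ChowRas} goes through verbatim once one notes that Theorem~\ref{GDQ-lem} (via \cite{Rampazzo2001}) already covers the possibly multi-valued length-one flows, with $\textsf{Reach}$ replaced by $\textsf{wReach}$. You supply more detail than the paper does --- in particular the remark that the non-directional chain rule and open mapping theorem are already stated for set-valued maps, and the selection argument identifying points of the set-valued composition with endpoints of admissible trajectories --- all of which is consistent with the paper's intended argument.
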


\begin{proof}
The proof follows exactly the same steps as in the proof of Theorem~\ref{ChowRas}, but now one also has to account for an additional case (which is not covered by Theorem~\ref{ChowRas}), when the set $\mathcal{J}\colonequals\{j\in\{1,\hdots,\ell\}\; : \; \text{\textsf{\LL}}(B_j)=1\}$ is non-empty and also the flows of the vector fields associated with the formal brackets $B_j$, $j\in\mathcal{J}$, may happen to be multi-valued. However, Theorem~\ref{GDQ-lem} still applies and so the proof is concluded.
\end{proof}

\appendix
\appsection{Proof of Theorem~\ref{open-t}}\label{a-OMT}
For the proof of Thoerem~\ref{open-t}, we need an extension of Kakutani's fixed-point theorem due to Cellina. For the sake of completeness, we also provide the proof of this result.
\begin{theorem}[Cellina \cite{Cellina1970}]\label{Kak-ext}
Let $K$ be a non-empty compact and convex set of $\rr^n$ for some integer $m>0$, and let $\Phi \colon K \rightrightarrows K$ be a CCA set-valued map. Then, $\Phi$ has a fixed point, i.e., there exists $x\in K$ such that $x \in \Phi(x)$.
\end{theorem}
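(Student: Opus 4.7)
The plan is to combine the CCA approximation of $\Phi$ with the classical Brouwer fixed point theorem. Since $K$ is itself a non-empty compact subset of $\rr^n$, applying the CCA hypothesis to the restriction $\Phi|_K=\Phi$ produces a sequence $\{\varphi_j\}_{j\in\nn}$ of continuous single-valued maps $\varphi_j\colon K\to K$ whose graphs inward graph converge to $\textsf{Gr}(\Phi)$ in $K\times K$.

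For each $j\in\nn$, the map $\varphi_j$ is a continuous self-map of the non-empty compact convex set $K\subset\rr^n$, so Brouwer's fixed point theorem yields a point $x_j\in K$ with $\varphi_j(x_j)=x_j$, i.e., $(x_j,x_j)\in\textsf{Gr}(\varphi_j)$. By compactness of $K$, after passing to a subsequence (not relabeled), I may assume $x_j\to x_*\in K$, and hence $(x_j,x_j)\to(x_*,x_*)$ in $K\times K$. The remaining task is to check that $(x_*,x_*)\in\textsf{Gr}(\Phi)$, which yields the desired fixed point $x_*\in\Phi(x_*)$.

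To establish this inclusion, I would argue by contradiction: suppose $(x_*,x_*)\notin\textsf{Gr}(\Phi)$. Since $\textsf{Gr}(\Phi)$ is compact and hence closed in the metric space $K\times K$, Hausdorff separation provides disjoint open sets $U_1,U_2\subset K\times K$ with $\textsf{Gr}(\Phi)\subset U_1$ and $(x_*,x_*)\in U_2$. Inward graph convergence then yields a threshold $j_0$ such that $\textsf{Gr}(\varphi_j)\subset U_1$ for every $j\ge j_0$, so in particular $(x_j,x_j)\in U_1$ for all $j\ge j_0$. On the other hand, $(x_j,x_j)\to(x_*,x_*)\in U_2$, whence $(x_j,x_j)\in U_2$ for all sufficiently large $j$, contradicting $U_1\cap U_2=\emptyset$.

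The only delicate part of this plan is the final separation step, where one must simultaneously invoke the closedness of $\textsf{Gr}(\Phi)$ inside the compact Hausdorff space $K\times K$ and the precise meaning of inward graph convergence; the earlier steps are a routine marriage of the CCA structure with Brouwer's theorem, once one observes that the CCA approximants take values in $K$ and so Brouwer applies to them directly.
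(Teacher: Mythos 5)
Your proposal is correct and follows essentially the same route as the paper: CCA approximants $\varphi_j\colon K\to K$, Brouwer fixed points $x_j$, and extraction of a convergent subsequence $x_j\to x_*$ by compactness of $K$. The only (harmless) difference is the final step: the paper verifies $(x_*,x_*)\in\textsf{Gr}(\Phi)$ quantitatively, via $\varepsilon$-neighborhoods of the graph and compactness of $\textsf{Gr}(\Phi)$ to extract a convergent sequence of graph points, whereas you use closedness of the compact graph plus separation by disjoint open sets and a contradiction, which is an equivalent use of the inward graph convergence hypothesis.
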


\begin{proof}
According to Definition~\ref{reg-def}, let $\{\varphi_j\}_{j\in \nn}$ be a sequence of single-valued continuous maps from $K$ to $K$ that inward graph converges to $\Phi$ as $j\to \infty$. By Brouwer's fixed-point theorem, there exists $x_j \in K$ such that $\varphi_j(x_j) = x_j$ for every $j\in \nn$. Since $K$ is compact, by passing to a subsequence if necessary, we can assume that there exists $x\in K$ such that $x_j \to x$ as $j\to \infty$. By the definition of inward graph convergence, we know that for every real number $\varepsilon>0$, $(x_j, \varphi(x_j)) = (x_j, x_j)$ belongs to the $\ve$-neighborhood of $\textsf{Gr}(\Phi)$:
\begin{equation*}
\textsf{Gr}_\ve(\Phi) \colonequals \Big\{(x,y) \in \rr^n \times \rr^n \: : \; 
\inf_{(v,w) \in \textsf{Gr}(\Phi)} (|x - v| + |y - w|) < \ve\Big\}
\end{equation*}
for each sufficiently large $j\in\nn$. In particular, for every $k\in\nn$, $(x_j, x_j) \in \textsf{Gr}_{1/k}(\Phi)$ for $j$ sufficiently large. Therefore, for every $k\in\nn$ there exist $j_k \in \nn$ and $v_k, w_k \in K$ such that $w_k \in \Phi(v_k)$ and
\begin{equation}\label{aux-ineq}
|x_{j_k} - v_k| + |x_{j_k} - w_k| < \frac{1}{k}.
\end{equation}
Since $\textsf{Gr}(\Phi)$ is compact, up to a subsequence, $(v_k, w_k) \to (v, w)$ as $k\to \infty$ for some $(v,w) \in \textsf{Gr}(\Phi)$. Hence, by passing to the limit as $k \to \infty$ in \eqref{aux-ineq}, we conclude that $x = v = w$ and thus $x \in \Phi(x)$.
\end{proof}

\begin{proof}[Proof of Theorem~\ref{open-t}]
Up to translations in $x$ and $y$, we can assume without loss of generality that $(x_*,y_*)=({\bf 0},{\bf 0})$. Since the set of surjective linear maps is open in $\text{\textsf{Lin}}(\rr^n, \rr^m)$\footnote{This fact is not too hard to verify for the finite-dimensional case (see, e.g., \cite[Theorem~3.4, Chapter~XV]{Lang1993} for the infinite-dimensional version of this fact).} and contains the compact set $\L$, let the real number $\delta>0$ be so small, such that the $\delta$-neighborhood $\L^\d$ of $\L$, also consists of surjective linear maps. With respect to this $\delta$, by the definition of a GDQ (see Definition~\ref{GDQ-def}) there exist $U$, a compact neighborhood of ${\bf 0}$ in $\rr^n$, and $G\colon U \rightrightarrows \L^\delta$, a CCA set-valued map, such that $G(x)x \subset F(x)$ for every $x\in U$.

It is easy to verify that a linear map $L\in\text{\textsf{Lin}}(\rr^n, \rr^m)$ is surjective if and only if $LL^{\sf T}$ is invertible, where $L^{\sf T}$ denotes the transpose (or adjoint) of $L$ and in such a case, the linear map $L^{\#} \colonequals L^{\sf T} (LL^{\sf T})^{-1}$ is a right inverse of $L$, i.e., $LL^{\#} = I$ (note that $L^{\#}$ is the Moore-Penrose pseudoinverse of $L$). Since every $L\in G(x)$ is surjective for all $x\in U$, the set-valued map $G^{\#} \colon U \rightrightarrows \text{\textsf{Lin}}(\rr^m, \rr^n)$ defined by
\begin{equation*}
G^{\#}(x) \colonequals \{L^{\#}\; : \; L \in G(x)\}
\end{equation*}
is well-defined and CCA, since it is the composite of the continuous (in fact, analytic) single-valued mapping $\L^\d \ni L \mapsto L^{\#} \in \text{\textsf{Lin}}(\rr^m, \rr^n)$ with the CCA set-valued map $G$.

Let the real number $\kappa>0$ be such that $| L^{\#} | \le \kappa$ for every $L \in \L^\delta$ (such a $\kappa$ exists, since $\L^\d$ is compact and the mapping $L \mapsto L^{\#}$ is continuous on $\L^\d$). Moreover, let the real number $\bve>0$ be such that $\overline{\mathfrak{B}}_{\kappa\bve}({\bf 0}) \subset U$. Now, for any real number $\ve \in (0,\bve]$ and $y \in \rr^m$ with $|y| \le \ve$, we want to show that there exists $x\in \rr^m$ with $|x| \le \kappa \ve$ such that $y \in F(x)$. To this end, let us consider the set-valued mapping:
\begin{equation*}
x \mapsto G^{\#}(x)y \colonequals \{L^{\#}y\; : \; L \in G(x)\}, 
\end{equation*}
defined on $U$, which can be verified to be CCA. Since, for $x\in\rr^n$ with $|x| \le \kappa\,\ve$ and $L\in G(x)$, one has that
\begin{equation*}
|L^{\#}y| \le k|y| \le k\ve,
\end{equation*}
the set-valued mapping $x\mapsto G^{\#}(x)y$, transforms the closed ball $\overline{\mathfrak{B}}_{k\ve}({\bf 0})$ into itself, i.e., $G^{\#}(x)y \subset \overline{\mathfrak{B}}_{k\ve}({\bf 0})$ for every $x\in \overline{\mathfrak{B}}_{k\ve}({\bf 0})$. By the extension of Kakutani's fixed-point theorem to CCA maps (see Theorem~\ref{Kak-ext}), there exists $x\in \overline{\mathfrak{B}}_{k\ve}({\bf 0})$ such that $x\in G^{\#}(x)y$, i.e., $x = L^{\#}y$ for some $L \in G(x)$. Therefore, since $Lx = LL^{\#}y = y$, we have that $y \in G(x)x \subset F(x)$.
\end{proof}

\appsection{Proof of Theorem~\ref{sucole}}\label{a-GDQ}
Since the conclusion is of local nature, without loss of generality, we can assume that $\mM=\rr^n$ for some integer $n>0$ and $(x_*,y_*)=({\bf 0},{\bf 0})$. Furthermore, as a distance we can take $d(x_1,x_2)\colonequals |x_1-x_2|$ for every $x_1,x_2\in V\colonequals \rr^n$. Let us now fix a real number $\d>0$, and let us choose $\bar\rho$ such that $0<\bar\rho\leq r$ and $\tilde{\o}([0,\bar\rho])\subset\left[0,\d/2\right]$. Let us also define the neighborhood $U\colonequals\overline{\mathfrak{B}}_{\bar\rho}({\bf 0})\subset V$. For each $j\in\nn$, let us now define the set-valued map {$G_j\colon U\rightrightarrows\Lambda^\delta$} as follows:
\begin{equation*}
G_j(x)\colonequals\{L\in\L^{\d}\; : \;\text{there exists}~y\in F_j(x)~\text{such that}~y=Lx\}.
\end{equation*}
Let us now show that for each sufficiently large $j\in\nn$, $G_j(x)\neq\{\emptyset\}$ for every $x\in U$. To this end, fix any $x\in U$, and for each $j\in\nn$, let us choose a point $y_j\in F_j(x)$ and a linear map $L_{1j}\in\L$ verifying $|y_j-L_{1j}x|\leq|x|\tilde{\o}_j(|x|)$. Such $y_j$ and $L_{1j}$ do exist in view of hypothesis $(iii)$, and also the compactness of $F_j(x)$ and $\Lambda$. Moreover, from the assumption of uniform convergence of the sequence $\{\tilde{\o}_j\}_{j\in\nn}$ to $\tilde{\o}$, there exists $\bar j\in\nn$ such that for each $j\geq\bar j$, one has that $\tilde{\o}_j(\rho)\leq|\tilde{\o}_j(\rho)-\tilde{\o}(\rho)|+|\tilde{\o}(\rho)|\leq\d/2+\d/2=\d$ for any $\rho\in [0,\bar\rho]$, which, in turn, implies that
\begin{equation*}
|y_j-L_{1j}x|\leq|x|\d
\end{equation*}
for each $j\geq\bar j$. Let us also define the linear map $L_{2j}\in\textsf{Lin}(\rr^n,\rr^m)$, by setting for every $z\in\rr^n$,
\begin{equation*}
L_{2j}z\colonequals\begin{cases}
\dfrac{\langle x,z\rangle}{|x|^2}(y_j-L_{1j}x), &\text{if $x\in\rr^n\setminus\{{\bf 0}\}$},\\
~~{\bf 0}, &\text{if $x={\bf 0}$}.
\end{cases}
\end{equation*}
If we now define the linear map ${L}_j\colonequals (L_{1j}+L_{2j})\in\textsf{Lin}(\rr^n,\rr^m)$, then we have that ${L}_j\in G_j(x)$ for each $j\geq\bar j$. Indeed, for each $j\geq\bar j$, one has that $L_{1j}\in\L$ and $|L_{2j}|\leq |y_j-L_{1j}x|/|x|\leq\d$ (which implies that ${L}_j\in\Lambda^\d$), $y_j\in F_j(x)$, and $y_j=L_jx$. Hence, for each $j\geq\bar j$, $G_j(x)\neq\{\emptyset\}$ for every $x\in U$.

Let us now define for every $x\in U$, the set-valued map {$G\colon U\rightrightarrows\Lambda^\delta$} as follows:
\begin{equation*}
G(x)\colonequals\{L\in\L^{\d}\; : \;\text{there exists}~y\in F(x)~\text{such that}~y=Lx\}.
\end{equation*}
Let us first show that $G(x)\neq\{\emptyset\}$ for every $x\in U$. Indeed, for any given $x\in U$, the sequence $\{L_j\}_{j\in\nn}$ constructed above is contained in $\Lambda^\d$, which is compact, so that, by possibly passing to a subsequence (which we do not relabel), $\{L_j\}_{j\in\nn}$ converges to a linear map $\tilde L\in \Lambda^\d$. Hence, $\lim_{j\to\infty}y_j=\lim_{j\to\infty}L_jx=\tilde Lx\equalscolon\tilde y$, and since $y_j\in F_j(x)$ and the sequence $\{F_j\}_{j\in\nn}$ is assumed to inward graph converge to $F$, we have that $\tilde y\in F(x)$. Hence, $\tilde L\in G(x)$, so that $G(x)\neq\{\emptyset\}$ for every $x\in U$.

By definition, $G$ satisfies condition $(iii)$ in Definition \ref{GDQ-def}. In order to verify that $G$ is {CCA}, let us begin by proving that for any non-empty compact set $K\subset U$, $\textsf{Gr}(G|_K)$ is compact. Since it is bounded, it suffices to show that it is closed. To this end, if $\{(x_i,L_i)\}_{i\in\nn}\subset\textsf{Gr}(G|_K)$ is a sequence converging to $(\tilde{x},\tilde{L})\in K\times\L^\d$, then by setting $y_i\colonequals L_ix_i$ for each $i\in\nn$, one has that $\lim_{i\to\infty}y_i=\tilde{y}\colonequals\tilde{L}\tilde{x}\in F(\tilde{x})$, since by assumption $F$ is upper semi-continuous and takes closed values. Hence, $(\tilde x,\tilde L)\in\textsf{Gr}(G|_K)$ and it now follows that $\textsf{Gr}(G|_K)$ is compact. Furthermore, arguing as above, one can verify that for any non-empty compact set $K\subset U$, $\textsf{Gr}(G_j|_K)$ is compact for each $j\geq\bar j$, so that, $G_j|_K$ is upper semi-continuous for each $j\geq\bar j$ (see \cite[Corollary~1.1.1, p. 42]{Aubin1984}). To verify that $G$ is a limit (in the sense of inward graph convergence) of a sequence of continuous single-valued maps from $K$ to $\Lambda^\delta$, where $K\subset U$ is any non-empty compact set, we first note that the sequence $\{G_j|_K\}_{j\in\nn}$ inward graph converges to $G|_K$. In particular, a suitable subsequence $\{G_{j_k}|_K\}_{k\in\nn}$ of $\{G_j|_K\}_{j\in\nn}$ verifies
\begin{equation*}
\sup\left\{\text{dist}(z,\textsf{Gr}(G|_K))\; : \;z\in\textsf{Gr}(G_{j_k}|_K)\right\}\leq\frac{1}{2k}
\end{equation*}
for each $k\in\nn$. Moreover, it is not too hard to verify that $G_{j}|_K$ is convex-valued for each $j\geq\bar j$. Hence, for each $j\geq\bar j$, {Cellina's} approximate selection theorem (see \cite[Theorem~1.12.1, p. 84]{Aubin1984}) implies that there exists a sequence $\{g_{ij}\}_{i\in\nn}$ of single-valued locally Lipschitz continuous functions from $K$ to $\L^\d$, which inward graph converges to $G_j|_K$. In particular, a suitable subsequence $\{g_{{i_k}{j_k}}\}_{k\in\nn}$ of $\{g_{i{j_k}}\}_{i\in\nn}$ verifies
\begin{equation*}
\sup\left\{\text{dist}(z,\textsf{Gr}(G_{j_k}|_K))\; : \;z\in\textsf{Gr}(g_{{i_k}{j_k}})\right\}\leq\frac{1}{2k}
\end{equation*}
for each $k\in\nn$. If we now let $\bar{g}_k\colonequals g_{{i_k}{j_k}}$ for each $k\in\nn$, then we have that
\begin{align*}
\sup\left\{\text{dist}(z,\textsf{Gr}(G|_K))\; : \;z\in\textsf{Gr}(\bar{g}_k)\right\}&\leq\sup\left\{\text{dist}(z,\textsf{Gr}(G_{j_k}|_K))\; : \;z\in\textsf{Gr}(\bar{g}_k)\right\}\\
&\quad+\sup\left\{\text{dist}(z,\textsf{Gr}(G|_K))\; : \;z\in\textsf{Gr}(G_{j_k}|_K)\right\}\leq\frac{1}{2k}+\frac{1}{2k}=\frac{1}{k},
\end{align*}
from which it follows that
\begin{equation*}
\lim_{k\to\infty}\sup\left\{\text{dist}(z,\textsf{Gr}(G|_K))\; : \;z\in\textsf{Gr}(\bar{g}_k)\right\}=0.
\end{equation*}
Hence, we have shown that $G|_K$ is a limit (in the sense of inward graph convergence) of a sequence $\{\tilde{g}_k\}_{k\in\nn}$ of single-valued continuous functions from $K$ to $\L^\d$, which concludes the proof.\qed

\section*{Acknowledgments}
We wish to express our deep gratitude to H{\'e}ctor J. Sussmann, whose unpublished work as well as several private communications with the third author have served as an important source of inspiration for this paper.

{The first author was supported by the Albanian-American Development Foundation via the Research Expertise from the Academic Diaspora (READ) Fellowship Program 2025, in the framework of the project \emph{Qualitative Analysis and Inverse Problems for a Class of Non-linear Partial Differential Equations: A Research and Training Effort}.} This work was carried out while the second author was a postdoctoral fellow at the Institute for Mathematics and its Applications (IMA) during the IMA's annual program on ``\emph{Control Theory and its Applications}''. During the preparation of this paper, the third author was a member of the Gruppo Nazionale per l'Analisi Matematica, la Probabilit{\`a} e le loro Applicazioni (GNAMPA) of the Istituto Nazionale di Alta Matematica (INdAM). He is also a member of ``INdAM - GNAMPA Project 2023'', codice CUP E53C22001930001 (``\emph{Problems with Constrained Dynamics: Non}-\emph{Smoothness and Geometric Aspects, Impulses and Delays}''), ``INdAM - GNAMPA Project 2024'', codice CUP E53C23001670001 (``\emph{Non}-\emph{Smooth Optimal Control Problems}'') and PRIN 2022, Progr-2022238YY5-PE1 (``\emph{Optimal Control Problems: Analysis, Approximations and Applications}'').

\bibliographystyle{abbrv}
\bibliography{references}

\end{document}